\def\NAT@spacechar{~}
\crefname{figure}{Figure}{Figures}
\Crefname{figure}{Figure}{Figures}
\newtheorem{definition}{Definition}[section]
\newtheorem{proposition}[definition]{Proposition}
\newtheorem{theorem}[definition]{Theorem}
\newtheorem{corollary}[definition]{Corollary}
\newtheorem{lemma}[definition]{Lemma}
\newtheorem{remark}[definition]{Remark}
\numberwithin{equation}{section}
\newcommand{\bigO}{\ensuremath{O}}
\newcommand{\comment}[1]{}
\newcommand{\dist}{{\rm dist}}
\renewcommand{\epsilon}{\varepsilon}
\renewcommand{\deg}{\operatorname{deg}}
\newcommand{\COMMENT}[1]{}
\newcommand{\urlprefix}{}
\title[Edge correlations in random regular hypergraphs and subgraph testing]{Edge correlations in random regular hypergraphs and applications to subgraph testing}
\address{School of Mathematics, University of Birmingham, 
Edgbaston, Birmingham, B15 2TT, United Kingdom}
\email{axe673@bham.ac.uk, f.joos@bham.ac.uk, d.kuhn@bham.ac.uk, d.osthus@bham.ac.uk}
\author{Alberto Espuny D\'iaz}
\author{Felix Joos}
\author{Daniela K\"uhn}
\author{Deryk Osthus}
\thanks{
The research leading to these results was partially supported by the Deutsche Forschungsgemeinschaft (DFG, German Research Foundation) -- 339933727 (F.~Joos); 
the EPSRC, grant no.~EP/N019504/1 (D.~K\"uhn);
the Royal Society and the Wolfson Foundation (D.~K\"uhn),
and the European Research Council under the European Union's Seventh Framework Programme (FP/2007--2013) / ERC Grant 306349 (D.~Osthus).}
\date{\today}
\begin{document}

\begin{abstract}
Compared to the classical binomial random (hyper)graph model, the study of random regular hypergraphs is made more challenging due to correlations between the occurrence of different edges.
We develop an edge-switching technique for hypergraphs which allows us to show that these correlations are limited for a large range of densities.
This extends some previous results of Kim, Sudakov and Vu for graphs.
From our results we deduce several corollaries on subgraph counts in random $d$-regular hypergraphs.
We also prove a conjecture of Dudek, Frieze, Ruci{\' n}ski and {\v S}ileikis on the threshold for the existence of an $\ell$-overlapping Hamilton cycle in a random $d$-regular $r$-graph.

Moreover, we apply our results to prove bounds on the query complexity of testing subgraph-freeness.
The problem of testing subgraph-freeness in the general graphs model was first studied by Alon, Kaufman, Krivelevich and Ron, who obtained several bounds on the query complexity of testing triangle-freeness.
We extend some of these previous results beyond the triangle setting and to the hypergraph setting.
\end{abstract}
\maketitle
\thispagestyle{empty}

\section{Introduction}


\subsection{Random regular graphs}

While the consideration of random $d$-regular graphs is very natural and has a long history, this model is much more difficult to analyze than the seemingly similar $\mathcal{G}(n,p)$ and $\mathcal{G}(n,m)$ models due to the dependencies between edges (here $\mathcal{G}(n,p)$ refers to the binomial $n$-vertex random graph model with edge probability $p$ and $\mathcal{G}(n,m)$ refers to the uniform distribution on all $n$-vertex graphs with $m$ edges).
For small $d$, the configuration model (due to \citet{bollobas80}) has led to numerous results on random $d$-regular graphs.
Moreover, the switching method introduced by \citet{Switchoriginal} has led to results for a much larger range of $d$ than can be handled by the configuration model.
For example, \citet{KimSudaVu07} used such ideas to show that the classical results on distributions of small subgraphs in $\mathcal{G}(n,p)$ carry over to random regular graphs.

In this paper we develop an edge switching technique for random regular $r$-uniform hypergraphs (also called $r$-graphs).
More precisely, we show that correlations between the existence of edges in a random regular $r$-graph are small even if we condition on the (non-)existence of some further edges (see \cref{coro:switchprob}).
This allows us to generalise results of \citet{KimSudaVu07} on the appearance of fixed subgraphs in a random regular graph to the hypergraph setting (see \cref{coro:copies}). Moreover, even in the graph case, we can condition on the (non-)existence of a significantly larger edge set than in \cite{KimSudaVu07}.

A general result of \citet{DFRSsandwitch} implies that one can transfer many statements from the binomial model to the random regular hypergraph model (see \cref{teor:sandwichnm}).
This allows them to deduce (from the main result of \citet{DF13}) the following: if $2 \le \ell <r$ and $n^{\ell-1} \ll d \ll n^{r-1}$, then a random $d$-regular $r$-graph a.a.s.~contains an $\ell$-overlapping Hamilton cycle, that is, a Hamilton cycle in which consecutive edges overlap in precisely $\ell$ vertices (these cycles are defined formally in \cref{section14}).
They conjectured that the lower bound provides the correct threshold in the following sense:
\begin{equation}\label{equa:hamcycle}
\begin{minipage}[c]{0.8\textwidth}\em
if\/ $2\leq\ell<r$ and\/ $d \ll n^{\ell-1}$, then a.a.s.~a random\/ $d$-regular\/ $r$-graph contains no\/ $\ell$-overlapping Hamilton cycle.
\end{minipage}\ignorespacesafterend 
\end{equation} 
Our correlation results from \cref{section2} allow us to confirm this conjecture (see \cref{coro:overlappingcycles}).
The threshold for a loose Hamilton cycle (i.e.~a $1$-overlapping Hamilton cycle) in a random $d$-regular $r$-graph was recently determined (via the configuration model) by \citet{AGIR16}. This improved earlier bounds by \citet{FRS15}.
\citet{AGIR16} also investigated the above conjecture and proved that \eqref{equa:hamcycle} holds under the much stronger condition that $d \ll n$ if $r\geq4$ and $d \ll n^{1/2}$ if $r=3$ (we do rely on their result when $d$ is constant to establish \eqref{equa:hamcycle}).
The graph case $r=2$ where $d$ is fixed is a classical result by \citet{RW92,RW94}: if $d\geq3$ is fixed, then a.a.s.~a random $d$-regular graph has a Hamilton cycle.
This was extended to larger $d$ by \citet{CFR02}\COMMENT{Their result covers $c \le d \le n/c$, but linear $d$ is covered by lots of standard Hamiltonicity criteria.}.

In a similar way, we can transfer several classical counting results for random graphs to the regular setting.
We illustrate this for Hamilton cycles, where we extend the density range of a counting result of \citet{Kri12}: for $\log n\ll d \ll n$, a.a.s.~the number of Hamilton cycles in a random $d$-regular $n$-vertex graph  is fairly close to $n! (d/n)^n$ (see \cref{coro:hamcycleskrive}). The results by
\citet{Kri12} imply the same behaviour for $d \gg e^{(\log n)^{1/2}}$.
For constant $d$, this problem was studied by \citet{Janson2}.
Similarly, we transfer a general counting result for spanning subgraphs in 
${\mathcal G}(n,m)$ due to \citet{Riordan} to the setting of random regular graphs.


\subsection{Property testing}

The running time of any ``exact'' algorithm that checks whether a given combinatorial object has a given property must be at least linear in the size of the input. 
Property testing algorithms have the potential to give much quicker answers, although at the cost of not knowing for certain if the desired property is satisfied by the object.
A property testing algorithm is usually given oracle access to the combinatorial object, and answers whether the object satisfies the property or is ``far'' from satisfying it.

To be precise, following e.g.~\citet{GGR98}, we define testers as follows.
Given a property $\mathcal{P}$, a \emph{tester} for $\mathcal{P}$ is a (possibly randomized) algorithm that is given a distance parameter $\varepsilon$ and oracle access to a structure $S$.
If $S\in\mathcal{P}$, then the algorithm must accept with probability at least $2/3$.
If $S$ is $\varepsilon$-far from $\mathcal{P}$, then the algorithm should reject with probability at least $2/3$.
If the algorithm is allowed to make an error in both cases, we say it is a \emph{two-sided error tester}; if, on the contrary, the algorithm always gives the correct answer when $S$ has the property, we say it is a \emph{one-sided error tester}.

For graphs (and, more generally, $r$-graphs) there have been two classical models for testers: one of them is the dense model, and the other is the bounded-degree model.
In the dense model, the density of the $r$-graph is assumed to be bounded away from $0$, and we say that an $r$-graph $G$ is $\varepsilon$-far from having property $\mathcal{P}$ if at least $\varepsilon n^r$ edges have to be modified (added or deleted) to turn $G$ into a graph that satisfies $\mathcal{P}$.
Many results have been proved for the dense model.
In particular, there exists a characterization of all properties which are testable with constant query complexity (by \citet{AFNS09} in the graph case and \citet{JKKO17} in the $r$-graph case).
For the bounded-degree graphs model (which assumes that the maximum degree of the input graphs is bounded by a fixed constant), several general results have also been obtained (see for example the results of \citet{BSS10} as well as \citet{NS13}).

Here, we consider the general graphs model and its generalization to $r$-graphs. 
In the general graphs model (introduced by \citet{KKR03}), 
a graph $G$ with $m$ edges is $\varepsilon$-far from having property $\mathcal{P}$ if at least $\varepsilon m$ edges have to be modified for the graph to satisfy $\mathcal{P}$.
Furthermore, we also assume that the edges are labelled in the sense that for each vertex there is an ordering of its incident edges.
It is natural to consider the following two types of queries.
Firstly, we allow vertex-pair queries, where any algorithm may take two vertices and ask whether they are joined by an edge in the graph or not.
Secondly, we allow neighbour queries, where any algorithm may take a vertex and ask which vertex is its $i$-th neighbour. 

These notions  generalise to hypergraphs in a straightforward way.
More precisely, we will consider the following general hypergraphs model, where a hypergraph with $m$ edges is $\varepsilon$-far from having property $\mathcal{P}$ if at least $\varepsilon m$ edges must be added or deleted to ensure the resulting hypergraph satisfies $\mathcal{P}$.
As in the graph case, we will consider two types of queries:
\begin{itemize}
\item Vertex-set queries: Any algorithm may take a set of $r$ vertices and ask whether they constitute an edge in the $r$-graph or not. The answer must be either yes or no.
\item Neighbour queries: Any algorithm may take a vertex and ask for its $i$-th incident edge (according to the labelling of the edges). The answer is either a set of $r-1$ vertices or an error message if the degree of the queried vertex is smaller than $i$.
\end{itemize}

In this paper we consider the property $\mathcal{P}$ of being $F$-free for fixed $r$-graphs $F$.
In the dense setting, the theory of hypergraph regularity (as developed by \citet{RS04}, \citet{RS071,RS072,RS073} as well as \citet{GowersHReg}) implies the existence of testers with constant query complexity for this problem.

However, the problem is still wide open for general graphs and hypergraphs.
\citet{AlonAl08} studied the problem of testing triangle-freeness.
In \cref{section4}, we provide lower and upper bounds for testing $F$-freeness which apply to large classes of hypergraphs $F$.
In particular, we observe that testing $F$-freeness cannot be achieved in a constant number of queries whenever $F$ is not a weak forest and the density of the graphs $G$ to be tested is somewhat below the Tur\'an threshold for $F$ (see \cref{teor:bound2}).
Based on the results of \cref{section2,section31}, we also provide a lower bound (see \cref{teor:secondbound}) which improves on \cref{teor:bound2} for a large range of parameters and $r$-graphs.
Roughly speaking, \cref{teor:secondbound} provides better bounds than \cref{teor:bound2} if the average degree $d$ of the input $r$-graph $G$ is not too small.
On the other hand, the class of admissible $F$ is more restricted.
We also provide three upper bounds on the query complexity (see \cref{section43}).

\citet{KKR03} also studied the problem of testing bipartiteness in general graphs.
It would be interesting to obtain results for the general (hyper)graphs model covering further properties and to improve the lower and upper bounds we present for testing
$F$-freeness.


\subsection{Outline of the paper}

The remainder of the paper is organised as follows.
In \cref{section2} we develop a hypergraph generalisation of the edge-switching technique to prove a correlation result (\cref{coro:switchprob}) for the event that a given edge is present in a random $d$-regular $r$-graph even if we condition on the (non-)existence of some further edges.

\Cref{section3} builds on this to obtain subgraph count results in random $d$-regular $r$-graphs.
In particular, in \cref{section31} we consider the counting problem for small fixed graphs $F$, for which we prove a concentration result, thus also obtaining the threshold for their appearance, which generalises a result of \citet{KimSudaVu07} for graphs.
We also derive bounds on the number of edge-disjoint copies of fixed subgraphs $F$ in a random $d$-regular $r$-graph, which we use in \cref{section42}.
In \cref{section32}, we combine the results from \cref{section2} with known results for $\mathcal{G}^{(r)}(n,p)$ and $\mathcal{G}^{(r)}(n,m)$ to count the number of suitable spanning subgraphs (such as Hamilton cycles) in random $d$-regular $r$-graphs.

Finally, \cref{section4} provides lower and upper bounds on the query complexity for testing subgraph freeness for small, fixed $r$-graphs $F$.
The proof of the main lower bound relies on \cref{coro:switchprob} and the counting results derived in \cref{section31}.


\subsection{Definitions and notation}\label{section14}

Given any $n\in\mathbb{N}$, we will write $[n]\coloneqq\{1,\ldots,n\}$.
Throughout the paper, we will use the standard $\bigO$ notation to compare asymptotic behaviours of functions.
Whenever this is used, we implicitly assume that the functions are non-negative.
Given $a,b,c\in\mathbb{R}$, we will write $c=a\pm b$ if $c\in[a-b,a+b]$.

An \emph{$r$-graph} (or $r$-uniform hypergraph) $H=(V,E)$ is an ordered pair where $V$ is a set of vertices, and $E\subseteq\binom{V}{r}$ is a set of $r$-subsets of $V$, called edges.
We always assume that $r$ is a fixed integer greater than $1$.
When $r=2$, we will simply refer to these as graphs and omit the presence of $r$ in any notation.
To indicate the vertex set and the edge set of a certain $r$-graph $H$ we will use the notation $V(H)$ and $E(H)$, respectively.
We will often abuse notation and write $e\in H$ to mean $e\in E(H)$, or use $E(H)$ instead of $H$ to denote the $r$-graph.
In particular, we write $|H|$ for $|E(H)|$.
The \emph{order} of an $r$-graph $H$ is $|V(H)|$ and the \emph{size} of $H$ is $|E(H)|$.
For a fixed $r$-graph $H$, we sometimes denote its number of vertices by $v_H$, while $e_H$ will denote the number of edges.

Given a vertex $v\in V(H)$, the \emph{degree} of $v$ in $H$ is $\deg_H(v)\coloneqq|\{e\in H:v\in e\}|$.
When $H$ is clear from the context, it may be dropped from the notation.
We will use $\Delta(H)$ to denote the maximum (vertex) degree of $H$, $\delta(H)$ to denote the minimum (vertex) degree of $H$ and $d(H)$ to denote its average (vertex) degree.
We say that $H$ is \emph{$d$-regular} if $\deg_H(v)=d$ for all $v\in V(H)$.
The set of vertices lying in a common edge with $v$ is called its \emph{neighbourhood} and denoted by $N_H(v)$.

The \emph{complete} $r$-graph of order $n$ is denoted by $K_n^{(r)}$.
If its vertex set $V$ is given, we denote this by $K_V^{(r)}$.
We say that an $r$-graph $H$ is \emph{$k$-partite} if there exists a partition of $V(H)$ into $k$ sets such that every edge $e\in H$ contains at most one vertex in each of the sets.
A \emph{path} $P$ between vertices $u$ and $v$, also called a \emph{$(u,v)$-path}, is an $r$-graph whose vertices admit a labelling $u,v_1,\ldots,v_k,v$ such that any two consecutive vertices lie in an edge of $P$ and each edge consists of consecutive vertices.
An $r$-graph $H$ is \emph{connected} if there exists a path joining any two vertices in $V(H)$.
The \emph{distance} between vertices $u$ and $v$ in $H$ is defined by $\dist_H(u,u)\coloneqq0$ and $\dist_H(u,v)\coloneqq\min\{|P|:P\text{ is an }(u,v)\text{-path}\}$ whenever $u\neq v$.
If there is no such path, the distance is said to be infinite.
The distance between sets of vertices $S$ and $T$ is $\dist_H(S,T)\coloneqq\min\{\dist_H(s,t):s\in S, t\in T\}$.
The \emph{diameter} of an $r$-graph $H$ is $D(H)\coloneqq\max_{(u,v)\in V(H)^2}\dist_H(u,v)$.
An $r$-graph $C$ is a $k$-\emph{overlapping cycle} of length $\ell$ if $|C|=\ell$ and the vertices of $C$ admit a cyclic labelling such that each edge in $C$ consists of $r$ consecutive vertices and any two consecutive edges have exactly $k$ vertices in common (in the natural cyclic order induced on the edges of $C$)\COMMENT{For $k>r/2$, this means non-consecutive edges also intersect.}.
When $k=1$, we refer to $C$ as a \emph{loose cycle}.
When $k=r-1$, $C$ is called a \emph{tight cycle}.
A $k$-overlapping cycle $C$ is said to be \emph{Hamiltonian} for an $r$-graph $H$ if $E(C)\subseteq E(H)$ and $V(C)=V(H)$.
We will write $C_n^k$ for a $k$-overlapping cycle of order $n$.
We say that a connected $r$-graph $H$ is a \emph{weak tree} if $|e\cap f|\leq1$ for all $e,f\in H$ with $e\neq f$, and $H$ contains no loose cycles.
We say that an $r$-graph is a \emph{weak forest} if it is the union of vertex-disjoint weak trees.
Note that, for graphs, this is the usual definition of a forest.
Given any $r$-graph $H$, its complement is denoted as $\overline{H}$.

The Erd\H os-R\'enyi random $r$-graph, also called the binomial model, is denoted by $\mathcal{G}^{(r)}(n,p)$, for $n\in\mathbb{N}$ and $p\in[0,1]$.
An $r$-graph $G^{(r)}(n,p)$ on vertex set $V$ with $|V|=n$ chosen according to this model is obtained by including each $e\in\binom{V}{r}$ with probability $p$ independently from the other edges.
For $n\in\mathbb{N}$ and $m\in[\binom{n}{r}]\cup\{0\}$, we denote by $\mathcal{G}^{(r)}(n,m)$ the set of all $r$-graphs on $n$ vertices that have exactly $m$ edges, and denote by $G^{(r)}(n,m)$ an $r$-graph chosen uniformly at random from this set.
We denote the set of all $d$-regular $r$-graphs on vertex set $V$ with $|V|=n$ by $\mathcal{G}_{n,d}^{(r)}$, for $n\in\mathbb{N}$ and $d\in[\binom{n-1}{r-1}]\cup\{0\}$, and denote by $G_{n,d}^{(r)}$ an $r$-graph chosen uniformly at random from $\mathcal{G}_{n,d}^{(r)}$.
If $H$ and $H'$ are two $r$-graphs on vertex set $V$, we define $\mathcal{G}_{n,d,H,H'}^{(r)}$ as the set of all $r$-graphs $G\in \mathcal{G}_{n,d}^{(r)}$ such that $H\subseteq G$ and $H'\subseteq\overline{G}$\COMMENT{Note that $\mathcal{G}_{n,d,H,H'}^{(r)}$ can only be non-empty if $H$ and $H'$ are edge-disjoint, all the degrees of $H$ are at most $d$, and all the degrees of $H'$ are at most $\binom{n-1}{r-1}-d$.}.
With a slight abuse of notation, we sometimes also treat $\mathcal{G}_{n,d,H,H'}^{(r)}$ as the event that $G_{n,d}^{(r)}\in\mathcal{G}_{n,d,H,H'}^{(r)}$.
Given a sequence of events $\{\mathcal{A}_n\}_{n\geq1}$, we will say that $\mathcal{A}_n$ holds \emph{asymptotically almost surely}, and write a.a.s., if $\lim_{n\to\infty}\mathbb{P}[\mathcal{A}_n]=1$.

Throughout the paper, we will often use the following observation.

\begin{remark}\label{remark:intro}
Let $r\geq2$ be an integer, and let $d=o(n^{r-1})$ be such that $r\mid nd$.
Then, there exist $d$-regular $r$-graphs on $n$ vertices.
\end{remark}

Indeed, since $r\mid nd$, we can write $r=r_1r_2$ such that $r_1\mid n$ and $r_2\mid d$. Then an $(r-r_1)$-overlapping cycle is $r_2$-regular, and thus 
an edge-disjoint union of $d/r_2$ such cycles on the same vertex set is $d$-regular.
Since $d=o(n^{r-1})$, such a set of $d/r_2$ edge-disjoint cycles can be found iteratively (see e.g.~\cite[Theorem~2]{GPW12}).%
%

The condition that $r\mid nd$ is necessary, and throughout the paper we will always implicitly assume it to hold.


\section{Edge-correlation in random regular $r$-graphs}\label{section2}

This section is devoted to estimating the probability that any fixed $r$-set of vertices forms an edge in a random $d$-regular $r$-graph, even if we require certain edges to be (not) present.
More precisely, we obtain accurate bounds on $\mathbb{P}[e\in G_{n,d}^{(r)}\mid\mathcal{G}_{n,d,H,H'}^{(r)}]$ for a large range of $d$ as long as $H$, $H'$ are sparse (see \cref{coro:switchprob}).
This result is the core ingredient for all the results in \cref{section3} and it will be used in the proof of our lower bound on the query complexity for testing $F$-freeness, for a fixed $r$-graph $F$, in \cref{section42}.

\Cref{coro:switchprob} follows immediately from \cref{lema:switch1} (which provides the upper bound) and \cref{lema:switch2} (which provides the lower bound).
To prove \cref{lema:switch1,lema:switch2} we develop a hypergraph generalization of the method of edge-switchings, which was introduced for graphs by \citet{Switchoriginal}.
The switchings we consider in the proof of \cref{lema:switch1} are similar to those used by \citet{DFRSsandwitch}.
The switchings we use in \cref{lema:switch2} are more complex however.
Moreover, to bound the number of certain `bad' configurations, the proof of
 \cref{lema:switch2} relies on \cref{lema:switch1}.
The special case of \cref{lema:switch1,lema:switch2} when $r=2$ and $H$, $H'$ have bounded size (which is much simpler to prove) was obtained by \citet{KimSudaVu07}.

\begin{lemma}\label{lema:switch1}
Let\/ $r\geq2$ be a fixed integer.
Assume that\/ $d=o(n^{r-1})$.
Suppose\/ $H,H'\subseteq\binom{V}{r}$ are two edge-disjoint\/ $r$-graphs such that\/ $|H|=o(nd)$ and\/ $\Delta(H')=o(n^{r-1})$.
Then, for all\/ $e\in\binom{V}{r}\setminus(H\cup H')$, we have
\[\mathbb{P}\left[e\in G_{n,d}^{(r)}\mid\mathcal{G}_{n,d,H,H'}^{(r)}\right]\leq(r-1)!\frac{d}{n^{r-1}}\left(1+\bigO\left(\frac{1}{n}+\frac{d}{n^{r-1}}+\frac{|H|}{nd}+\frac{\Delta(H')}{n^{r-1}}\right)\right).\]
\end{lemma}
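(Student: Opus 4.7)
Proof plan:

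I plan to write the conditional probability as $\mathbb{P}[e \in G_{n,d}^{(r)} \mid \mathcal{G}_{n,d,H,H'}^{(r)}] = N_e / (N_e + N_{\bar e}) \leq N_e/N_{\bar e}$, where $N_e := |\mathcal{G}_{n,d,H \cup \{e\},H'}^{(r)}|$ and $N_{\bar e} := |\mathcal{G}_{n,d,H,H' \cup \{e\}}^{(r)}|$, and bound the ratio $N_e/N_{\bar e}$ by a double-counting argument based on an edge-switching operation that generalises the Kim--Sudakov--Vu construction from graphs to $r$-graphs.

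The switching I would use exchanges $r+1$ edges at a time. Fix a labelling $v_1,\ldots,v_r$ of the vertices of $e$. Given $G \in \mathcal{G}_{n,d,H\cup\{e\},H'}^{(r)}$, the switching data is an ordered tuple $(f_1,u_1,\ldots,f_r,u_r)$ with $f_1,\ldots,f_r \in G \setminus H$ pairwise disjoint edges, all disjoint from $e$, and $u_i \in f_i$. The switching deletes $\{e,f_1,\ldots,f_r\}$ and inserts $g := \{u_1,\ldots,u_r\}$ together with $f_i' := (f_i \setminus \{u_i\}) \cup \{v_i\}$ for each $i$; the resulting $G'$ is again $d$-regular, as every $v_i$ trades $e$ for $f_i'$, every $u_i$ trades $f_i$ for $g$, and each other vertex of $f_i$ trades $f_i$ for $f_i'$. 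The switching is \emph{valid} when $G' \in \mathcal{G}_{n,d,H,H'\cup\{e\}}^{(r)}$, i.e.\ $g, f_1',\ldots,f_r'$ are genuine $r$-sets, pairwise distinct, not in $G \setminus \{e,f_1,\ldots,f_r\}$, and disjoint from $H'$. Letting $F(G), B(G')$ count the valid switchings out of $G$ and into $G'$, double counting gives $\sum_G F(G) = \sum_{G'} B(G')$, so $N_e/N_{\bar e} = \overline B / \overline F$, where $\overline F := N_e^{-1} \sum_G F(G)$ and $\overline B := N_{\bar e}^{-1} \sum_{G'} B(G')$.

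The upper bound on $B(G')$ is the easy side: pick $g \in G'$ as the ``central'' edge (at most $nd/r$ options), order its vertices to match each $u_i$ with its $v_i$ ($r!$ orderings), and for each $i$ pick $f_i' \in G'$ containing $v_i$ (at most $d$ options). The remaining data $(f_i,u_i)$ is then determined, and any further validity constraints only shrink the count, so $B(G') \leq (r-1)!\,nd^{r+1}$. For $F(G)$, the main term $(nd/r)^r \cdot r^r = (nd)^r$ counts ordered tuples without constraints. The bad tuples are subtracted case by case: coincidences or intersections among the $f_i$ or with $e$ contribute $O((nd)^r/n)$; the case $f_i \in H$ contributes $O((nd)^r \cdot |H|/(nd))$; and the new edge $g \in H'$ contributes $O((nd)^r \cdot \Delta(H')/n^{r-1})$, using $|H'| \leq n\Delta(H')/r$. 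Combining, $N_e/N_{\bar e} = \overline B/\overline F \leq (r-1)!\,d/n^{r-1} \cdot (1 + O(\cdot))$ with the claimed error.

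The main obstacle is controlling the bad tuples from $f_i' \in G$ and from $f_i' \in H'$, both of which count pairs of edges sharing $r-1$ vertices (one of which is in $G$ or $H'$, respectively). A naive worst-case bound using the trivial co-degree estimate (any fixed $(r-1)$-set lies in at most $d$ edges of $G$) yields error $O(d/n + \Delta(H')/n)$, which is sharp for graphs ($r=2$) but weaker than the claim for $r \geq 3$. To reach the sharper $O(d/n^{r-1} + \Delta(H')/n^{r-1})$ rate one must average over $G$: in a typical $d$-regular $r$-graph a fixed $(r-1)$-set has co-degree $O(d/n^{r-2})$ rather than $d$, saving a factor of $n^{r-2}$. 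Making this averaged estimate rigorous presumably requires a bootstrap step in which a weaker form of the lemma (with error $O(d/n)$) is proved first and then used to control the pair co-degree statistics needed for the sharp bound. This bootstrapping is where the technical core of the proof lies.
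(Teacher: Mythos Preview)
Your overall framework (switching plus double counting to bound $N_e/N_{\bar e}$) is correct, and your computations for the leading term and for the bad cases $g\in G$, $g\in H'$, $f_i\in H$ are fine. The genuine gap is exactly the one you flag: with your switching, the new edge $f_i'=(f_i\setminus\{u_i\})\cup\{v_i\}$ shares $r-1$ vertices with the old edge $f_i$, so the bad events $f_i'\in G$ and $f_i'\in H'$ reduce to $(r-1)$-codegree statistics, and a pointwise bound only gives error $O(d/n)$ and $O(\Delta(H')/n)$. You propose to recover the missing factor $n^{r-2}$ by a bootstrap/averaging step, but you do not carry it out, and it is not obvious (one would essentially have to prove a conditional two-edge estimate first).

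The paper's proof avoids this difficulty altogether by choosing a different switching. Instead of your $(r{+}1)$-for-$(r{+}1)$ swap, it removes only the $r$ edges $e_1:=e,e_2,\ldots,e_r$ (pairwise disjoint) and inserts $r$ edges $f_1,\ldots,f_r$ that are \emph{transversals} of the $e_i$: each $f_j$ contains $v_j$ together with exactly one vertex from each of $e_2,\ldots,e_r$. With this design, a bad event $f_j\in G$ (or $f_j\in H'$) fixes one vertex in every $e_i$, so the number of bad out-configurations is at most $rd\cdot (r-1)!\,d^{r-1}$ (respectively $r\Delta(H')\cdot (r-1)!\,d^{r-1}$), which against the main count $(nd/r)^{r-1}$ gives the claimed errors $O(d/n^{r-1})$ and $O(\Delta(H')/n^{r-1})$ directly, with no averaging over $G$ and no bootstrap. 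In short, the ``technical core'' you anticipate is not present in the paper's proof of this lemma; it is absorbed into the combinatorial design of the switching. (A bootstrap of the kind you describe does appear in the paper, but only for the companion lower-bound lemma, where the more intricate switching genuinely forces one to invoke the already-proved upper bound to control certain bad configurations.)
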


\begin{proof}
Write $e=\{v_1,\ldots,v_r\}$ and fix this labelling of the vertices in $e$.
Let $e_1\coloneqq e$ and let $e_2,\ldots,e_r\in\binom{V}{r}$ be pairwise disjoint and also disjoint from $e_1$.
Let $f_1,\ldots,f_r\in\binom{V}{r}$ be pairwise disjoint and such that $f_i\cap e_1=\{v_i\}$ for all $i\in[r]$.
We say that $\Lambda_e\coloneqq(e_1,\ldots,e_r)$ is an \emph{out-switching configuration} and that $\Lambda_{\overline{e}}\coloneqq(f_1,\ldots,f_r)$ is an \emph{in-switching configuration}.
If, furthermore, $|e_i\cap f_j|=1$ for all $i,j\in[r]$, we say that $\Lambda_e$ and $\Lambda_{\overline{e}}$ are \emph{related}\COMMENT{This is a symmetric relation.}.

Given $\Lambda_e=(e_1,\ldots,e_r)$, we denote the number of in-switching configurations related to $\Lambda_e$ by $\lambda_{\text{in}}=\lambda_{\text{in}}(\Lambda_e)$; we claim that
\begin{equation}\label{equa:lambdaetonote}
\lambda_{\text{in}}=(r!)^{r-1}.
\end{equation}
Indeed, for each $i\in[r]\setminus\{1\}$, write $e_i=\{v_1^i,\ldots,v_r^i\}$ and let $\pi_i\colon[r]\to[r]$ be a permutation.
For each $i\in[r]$, let $f_i\coloneqq\{v_i,v_{\pi_2(i)}^2,\ldots,v_{\pi_r(i)}^r\}$.
Then, $\Lambda_{\overline{e}}\coloneqq(f_1,\ldots,f_r)$ is related to $\Lambda_e$.
In this way, each (ordered) $(r-1)$-tuple of permutations $(\pi_2,\ldots,\pi_r)$ defines a unique in-switching configuration.
On the other hand, each $\Lambda_{\overline{e}}=(f_1,\ldots,f_r)$ related to $\Lambda_e$ gives rise to a different $(r-1)$-tuple of permutations $(\pi_2,\ldots,\pi_r)$ by setting, for each $i\in[r]\setminus\{1\}$ and $j\in[r]$, $\pi_i(j)$ to be the subscript of the vertex in $e_i\cap f_j$.
There are $(r!)^{r-1}$ such tuples of permutations, so \eqref{equa:lambdaetonote} follows.

Similarly, given $\Lambda_{\overline{e}}=(f_1,\ldots,f_r)$, we denote the number of out-switching configurations related to $\Lambda_{\overline{e}}$ by $\lambda_{\text{out}}=\lambda_{\text{out}}(\Lambda_{\overline{e}})$.
We claim that
\begin{equation}\label{equa:lambdanotetoe}
\lambda_{\text{out}}=((r-1)!)^{r}.
\end{equation}
Indeed, for each $i\in[r]$, write $f_i=\{v_i,v_2^i,\ldots,v_r^i\}$ and let $\sigma_i\colon[r]\setminus\{1\}\to[r]\setminus\{1\}$ be a permutation.
For each $i\in[r]\setminus\{1\}$, let $e_i\coloneqq\{v_{\sigma_1(i)}^1,\ldots,v_{\sigma_r(i)}^r\}$.
Then, $\Lambda_e\coloneqq(e_1,\ldots,e_r)$ is related to $\Lambda_{\overline{e}}$.
Each $r$-tuple of permutations $(\sigma_1,\ldots,\sigma_r)$ defines a unique $\Lambda_e$.
On the other hand, each $\Lambda_e=(e_1,\ldots,e_r)$ related to $\Lambda_{\overline{e}}$ gives rise to a unique $r$-tuple of permutations $(\sigma_1,\ldots,\sigma_r)$\COMMENT{by setting, for each $i\in[r]$ and $j\in[r]\setminus\{1\}$, $\sigma_i(j)$ to be the subscript of the vertex in $e_j\cap f_i$. There are $((r-1)!)^r$ such tuples of permutations.}.
Thus \eqref{equa:lambdanotetoe} holds.

Let $\Omega_1,\Omega_2\subseteq\binom{V}{r}$.
We define a function $\psi$ on the set of all $r$-graphs $G$ on $V$ by $\psi(G,\Omega_1,\Omega_2)\coloneqq(G\setminus\Omega_1)\cup\Omega_2$.
Now let $G$ be an $r$-graph on $V$.
Let $\Lambda_e$ and $\Lambda_{\overline{e}}$ be related out- and in-switching configurations, respectively, such that $\Lambda_e\subseteq G$ and $\Lambda_{\overline{e}}\subseteq\overline{G}$.
An \emph{out-switching} on $G$ from $\Lambda_e$ to $\Lambda_{\overline{e}}$ is obtained by applying the operation $\psi(G,\Lambda_e,\Lambda_{\overline{e}})$ (here $\Lambda_e$ and $\Lambda_{\overline{e}}$ are viewed as (unordered) sets of edges).
We denote this out-switching by the triple $(G,\Lambda_e,\Lambda_{\overline{e}})$.
Similarly, if $\Lambda_e$ and $\Lambda_{\overline{e}}$ are related out- and in-switching configurations, respectively, such that $\Lambda_e\subseteq\overline{G}$ and $\Lambda_{\overline{e}}\subseteq G$, an \emph{in-switching} on $G$ from $\Lambda_{\overline{e}}$ to $\Lambda_e$ is the operation $\psi(G,\Lambda_{\overline{e}},\Lambda_e)$, and is denoted by $(G,\Lambda_{\overline{e}},\Lambda_e)$.
Note that $\psi(\psi(G,\Lambda_{\overline{e}},\Lambda_e),\Lambda_e,\Lambda_{\overline{e}})=G$, that is, switchings are involutions.
Furthermore, both types of switchings preserve the vertex degrees of the $r$-graph $G$ on which they act.

Let $\mathcal{F}_e\subseteq\mathcal{G}_{n,d,H,H'}^{(r)}$ be the set of all $r$-graphs $G\in\mathcal{G}_{n,d,H,H'}^{(r)}$ such that $e\in G$, and let $\mathcal{F}_{\overline{e}}\coloneqq\mathcal{G}_{n,d,H,H'}^{(r)}\setminus\mathcal{F}_e$.
We define an auxiliary bipartite multigraph $\Gamma$ with bipartition $(\mathcal{F}_e,\mathcal{F}_{\overline{e}})$ as follows.
For each $G\in\mathcal{F}_e$, consider all possible out-switchings on $G$ whose image is in $\mathcal{G}_{n,d,H,H'}^{(r)}$ (that is, all triples $(G,\Lambda_e,\Lambda_{\overline{e}})$ such that $\Lambda_e\subseteq G\setminus H$ and $\Lambda_{\overline{e}}\subseteq\overline{G}\setminus H'$ are related) and add an edge between $G$ and $\psi(G,\Lambda_e,\Lambda_{\overline{e}})$ for each such triple $(G,\Lambda_e,\Lambda_{\overline{e}})$\COMMENT{Note that for any such triple we have that $\psi(G,\Lambda_e,\Lambda_{\overline{e}})\in\mathcal{F}_{\overline{e}}$.}.
Similarly, one could consider each $G\in\mathcal{F}_{\overline{e}}$ and every possible in-switching $(G,\Lambda_{\overline{e}},\Lambda_e)$ on $G$ with $\psi(G,\Lambda_{\overline{e}},\Lambda_e)\in\mathcal{G}_{n,d,H,H'}^{(r)}$, and add an edge between $G$ and $\psi(G,\Lambda_{\overline{e}},\Lambda_e)$\COMMENT{Note that for any such triple we have that $\psi(G,\Lambda_{\overline{e}},\Lambda_e)\in\mathcal{F}_e$.}.
Both constructions result in the same multigraph $\Gamma$\COMMENT{Note that there are multiple edges because two different $\Lambda_e^1$ and $\Lambda_e^2$ with the same edges in different order have matching $\Lambda_{\overline{e}}^1$ and $\Lambda_{\overline{e}}^2$ with the same edges in different order such that the result of the switching is the same $r$-graph for both out-switchings.}.

We will use switchings to bound $\mathbb{P}[e\in G_{n,d}^{(r)}\mid\mathcal{G}_{n,d,H,H'}^{(r)}]=|\mathcal{F}_e|/|\mathcal{G}_{n,d,H,H'}^{(r)}|$ from above in terms of $\mathbb{P}[e\notin G_{n,d}^{(r)}\mid\mathcal{G}_{n,d,H,H'}^{(r)}]$.
In order to obtain this bound, we will use a double-counting argument involving the edges of $\Gamma$. 

Assume first that $G\in\mathcal{F}_{\overline{e}}$.
Let $S_{\text{in}}(G)$ be the number of in-switchings $(G,\Lambda_{\overline{e}},\Lambda_e)$ on $G$, thus $\deg_{\Gamma}(G)\leq S_{\text{in}}(G)$\COMMENT{Here we don't have equality because the current definition of $S_{\text{in}}$ includes ``impossible'' switchings, as it does not consider $H$ and $H'$.}.
We claim that
\begin{equation}\label{equa:switch1up}
S_{\text{in}}(G)\leq ((r-1)!)^{r}d^r.
\end{equation}
Clearly, $S_{\text{in}}(G)$ is at most\COMMENT{It is not equal because one could have that one of the out-switching configurations related to an in-switching configuration contained in $G$ is not contained in $\overline{G}$.} the number of in-switching configurations $\Lambda_{\overline{e}}\subseteq G$ multiplied by $\lambda_{\text{out}}$.
As $G$ is $d$-regular and $\Lambda_{\overline{e}}$ must contain an edge incident to each $v_i\in e$, there are at most $d^r$ such in-switching configurations.
This, together with \eqref{equa:lambdanotetoe}, yields \eqref{equa:switch1up}.

Assume now that $G\in\mathcal{F}_e$.
Let $\ell\coloneqq|H|$ and\/ $k'\coloneqq\Delta(H')$, and let $\eta\coloneqq\eta(n,d,\ell,k')=\frac{1}{n}+\frac{d}{n^{r-1}}+\frac{\ell}{nd}+\frac{k'}{n^{r-1}}$.
Let $S_{\text{out}}(G)$ be the number of possible out-switchings $(G,\Lambda_e,\Lambda_{\overline{e}})$ on $G$ with $\psi(G,\Lambda_e,\Lambda_{\overline{e}})\in\mathcal{G}_{n,d,H,H'}^{(r)}$; thus, $\deg_{\Gamma}(G)=S_{\text{out}}(G)$.
We claim that
\begin{equation}\label{equa:switch1low}
S_{\text{out}}(G)\geq ((r-1)!)^{r-1}(nd)^{r-1}\left(1-\bigO\left(\eta\right)\right).
\end{equation}

In order to have $\psi(G,\Lambda_e,\Lambda_{\overline{e}})\in\mathcal{G}_{n,d,H,H'}^{(r)}$ we must have $\Lambda_e\subseteq G\setminus H$ and $\Lambda_{\overline{e}}\subseteq\overline{G}\setminus H'$.
Let $\lambda_e(G)$ be the number of out-switching configurations $\Lambda_e$ with $\Lambda_e\subseteq G\setminus H$.
We first give a lower bound on $\lambda_e(G)$.

Choose $\Lambda_e=(e_1,\ldots,e_r)$ by sequentially choosing $e_2,\ldots,e_r\in G\setminus H$ in such a way that $e_i$ is disjoint from $e_1,\ldots,e_{i-1}$, for $i\in[r]\setminus\{1\}$.
As each vertex is incident to exactly $d$ edges, the number of choices for $e_i$ is at least $(nd/r-\ell-(r-1)rd)$\COMMENT{Because $(r-1)rd$ is an upper bound on the number of edges incident to $e_1, \ldots, e_{i-1}$.}.
Thus,
\begin{equation}\label{equa:switch1eq2}
\lambda_e(G)\geq\left(\frac{nd}{r}-\ell-(r-1)rd\right)^{r-1}.
\end{equation}

We say that an out-switching configuration $\Lambda_e\subseteq G\setminus H$ is \emph{good} (for $G$) if there are $\lambda_{\text{in}}$ in-switching configurations $\Lambda_{\overline{e}}\subseteq\overline{G}\setminus H'$ related to $\Lambda_e$,
and \emph{bad} (for $G$) otherwise.
Let $\lambda_{e,\text{bad}}(G)$ denote the number of bad out-switching configurations $\Lambda_e\subseteq G\setminus H$.
We now provide an upper bound on this quantity.
An out-switching configuration $\Lambda_e\subseteq G\setminus H$ can only be bad if
\begin{enumerate}[label=(\alph*)]
\item one of the edges in some $\Lambda_{\overline{e}}$ related to $\Lambda_e$, say $g$, lies in $G$, or \label{badcond11}
\item one of the edges in some $\Lambda_{\overline{e}}$ related to $\Lambda_e$, say $h$, lies in $H'$. \label{badcond12}
\end{enumerate}
In case \ref{badcond11}, the edge $g$ has to intersect $e$, so there are at most $rd$ possible such edges $g$.
Furthermore, $g\setminus e$ must intersect every edge in $\Lambda_e\setminus\{e\}$, so each edge $g$ can make at most $(r-1)!d^{r-1}$ out-switching configurations bad.
Thus, there are at most $r!d^r$ out-switching configurations which are bad because of \ref{badcond11}.
In case \ref{badcond12}, the edge $h$ has to intersect $e$, so there are at most $rk'$ such edges.
As above, it follows that there are at most $r!k'd^{r-1}$ out-switching configurations which are bad because of \ref{badcond12}.
Overall,
\begin{equation}\label{equa:switch1eq3}
\lambda_{e,\text{bad}}(G)\leq r!d^r+r!k'd^{r-1}.
\end{equation}
By combining \eqref{equa:lambdaetonote}, \eqref{equa:switch1eq2} and \eqref{equa:switch1eq3}, we have that\COMMENT{$((r-1)!)^{r-1}(nd)^{r-1}-\bigO\left(\ell(nd)^{r-2}+n^{r-2}d^{r-1}+d^ r+k'd^{r-1}\right)=((r-1)!)^{r-1}(nd)^{r-1}-\bigO\left((nd)^{r-1}\left(\frac{\ell}{nd}+\frac{1}{n}+\frac{d}{n^{r-1}}+\frac{k'}{n^{r-1}}\right)\right)=$}
\begin{align*}
S_{\text{out}}(G)&\geq(r!)^{r-1}\left(\left(\frac{nd}{r}-\ell-(r-1)rd\right)^{r-1}-r!d^r-r!k'd^{r-1}\right)\nonumber\\
&=((r-1)!)^{r-1}(nd)^{r-1}\left(1-\bigO\left(\eta\right)\right).
\end{align*}

As \eqref{equa:switch1up} and \eqref{equa:switch1low} hold for every $G\in\mathcal{F}_{\overline{e}}$ and $G\in\mathcal{F}_e$, respectively, we can use these expressions to estimate the number $|\Gamma|$ of edges in $\Gamma$.
We conclude that
\[((r-1)!)^{r-1}(nd)^{r-1}\left(1-\bigO\left(\eta\right)\right)|\mathcal{F}_e|\leq|\Gamma|\leq ((r-1)!)^{r}d^r|\mathcal{F}_{\overline{e}}|.\]
Noting that $|\mathcal{F}_{\overline{e}}|\leq|\mathcal{G}_{n,d,H,H'}^{(r)}|$ and dividing this by $|\mathcal{G}_{n,d,H,H'}^{(r)}|$ implies that
\begin{equation*}
((r-1)!)^{r-1}(nd)^{r-1}\left(1-\bigO\left(\eta\right)\right)\cdot\mathbb{P}\left[e\in G_{n,d}^{(r)}\mid\mathcal{G}_{n,d,H,H'}^{(r)}\right]\leq ((r-1)!)^{r}d^r.
\end{equation*}
Thus, we conclude that
\begin{equation*}
\mathbb{P}\left[e\in G_{n,d}^{(r)}\mid\mathcal{G}_{n,d,H,H'}^{(r)}\right]\leq\COMMENT{$(r-1)!\frac{d^r}{(nd)^{r-1}\left[1-\bigO\left(\eta\right)\right]}=$}(r-1)!\frac{d}{n^{r-1}}\left(1+\bigO\left(\eta\right)\right).\qedhere
\end{equation*}
\end{proof}

\begin{lemma}\label{lema:switch2}
Let\/ $r\geq2$ be a fixed integer.
Suppose that\/ $d=\omega(1)$ and\/ $d=o(n^{r-1})$.
Let\/ $H,H'\subseteq\binom{V}{r}$ be two edge-disjoint\/ $r$-graphs such that\/ $\Delta(H),\Delta(H')=o(d)$.
Then, for all\/ $e\in\binom{V}{r}\setminus(H\cup H')$,
\[\mathbb{P}\left[e\in G_{n,d}^{(r)}\mid\mathcal{G}_{n,d,H,H'}^{(r)}\right]\geq(r-1)!\frac{d}{n^{r-1}}\left(1-\bigO\left(\frac{1}{n}+\frac{1}{d}+\frac{d}{n^{r-1}}+\frac{\Delta(H)}{d}+\frac{\Delta(H')}{d}\right)\right).\]
\end{lemma}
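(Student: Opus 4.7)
The plan is to mirror the switching and double-counting framework of \cref{lema:switch1}, but with the two estimates reversed: now we need an \emph{upper} bound on $S_{\text{out}}(G)$ for $G\in\mathcal{F}_e$ and a \emph{lower} bound on the total $\sum_{G\in\mathcal{F}_{\overline{e}}}S_{\text{in}}(G)$, feeding both into
\[
\sum_{G\in\mathcal{F}_e}S_{\text{out}}(G)\;=\;|\Gamma|\;=\;\sum_{G\in\mathcal{F}_{\overline{e}}}S_{\text{in}}(G).
\]
The upper bound on $S_{\text{out}}(G)$ is straightforward: every out-switching configuration $\Lambda_e\subseteq G$ consists of $e$ together with $r-1$ pairwise disjoint edges of $G$ avoiding $e$, so there are at most $(nd/r)^{r-1}$ such configurations, each related to $\lambda_{\text{in}}=(r!)^{r-1}$ in-switching configurations; this yields $S_{\text{out}}(G)\le ((r-1)!)^{r-1}(nd)^{r-1}$ for every $G\in\mathcal{F}_e$.

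The core of the proof is the lower bound on in-switching counts. I would construct valid triples $(G,\Lambda_{\overline{e}},\Lambda_e)$ in two stages. Stage one: for each $G\in\mathcal{F}_{\overline{e}}$, greedily build an in-switching configuration $\Lambda_{\overline{e}}=(f_1,\ldots,f_r)\subseteq G\setminus H$ by choosing $f_i\ni v_i$ in $G\setminus H$ avoiding previous $f_j$'s outside of $e$. Since $G$ is $d$-regular and $\Delta(H)=o(d)$, each step has at least $d-\Delta(H)-(\text{codegree overlap})-O(r^2)$ candidates; the additive constants $O(r^2)$ are what produce the $1/d$ term (using $d=\omega(1)$), while $\Delta(H)$ produces the $\Delta(H)/d$ term. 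Stage two: for each such $\Lambda_{\overline{e}}$, count related $\Lambda_e\subseteq\overline{G}\setminus H'$ by starting from $\lambda_{\text{out}}=((r-1)!)^r$ and discarding related configurations in which some $e_i$ lies in $G\cup H'$. The contribution of $e_i\in H'$ admits a purely combinatorial bound: any $f\in H'$ can contribute to a bad $\Lambda_e$ only if $f$ is a transversal of $\Lambda_{\overline{e}}\setminus\{e\}$, and a direct count over the $r^2$ vertices of $\bigcup_j f_j$ yields a loss of order $\Delta(H')/d$.

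The main obstacle is controlling the remaining codegree-type contributions: the overlap between the $f_i$'s in stage one and the number of related $\Lambda_e$'s with some $e_i\in G$ in stage two are both governed by quantities of the form $d_G(u,v)$, which cannot be bounded uniformly over $\mathcal{F}_{\overline{e}}$. I would bypass this by forgoing per-$G$ estimates and working with the sum $\sum_{G\in\mathcal{F}_{\overline{e}}}S_{\text{in}}(G)$ directly: swapping the order of summation, each bad edge $f\neq e$ contributes a quantity proportional to $\mathbb{P}[f\in G_{n,d}^{(r)}\mid\mathcal{G}_{n,d,H,H'\cup\{e\}}^{(r)}]\cdot|\mathcal{G}_{n,d,H,H'\cup\{e\}}^{(r)}|$. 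Adding $e$ to $H'$ preserves the hypotheses of \cref{lema:switch1} since $\Delta(H'\cup\{e\})\le\Delta(H')+1=o(n^{r-1})$ and $|H|=o(nd)$ is immediate from $\Delta(H)=o(d)$, so \cref{lema:switch1} bounds the relevant probability by $O(d/n^{r-1})$. Summing the $O(nd)$ bad candidate edges per configuration absorbs these into the $O(1/n+d/n^{r-1})$ terms of $\eta$. Collecting everything in the double-counting identity yields
\[
((r-1)!)^{r-1}(nd)^{r-1}\,|\mathcal{F}_e|\;\ge\;((r-1)!)^{r}d^{r}\bigl(1-O(\eta)\bigr)\,|\mathcal{F}_{\overline{e}}|,
\]
and since \cref{lema:switch1} already forces $|\mathcal{F}_e|/|\mathcal{G}_{n,d,H,H'}^{(r)}|=O(d/n^{r-1})=o(1)$, hence $|\mathcal{F}_{\overline{e}}|/|\mathcal{G}_{n,d,H,H'}^{(r)}|=1-o(1)$, dividing through by $|\mathcal{G}_{n,d,H,H'}^{(r)}|$ delivers the claimed lower bound on $\mathbb{P}[e\in G_{n,d}^{(r)}\mid\mathcal{G}_{n,d,H,H'}^{(r)}]$.
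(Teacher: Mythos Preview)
Your plan reuses the switchings of \cref{lema:switch1}, whereas the paper introduces \emph{new} $(r{+}1)$-edge switchings: an in-configuration is now $(f_1,\ldots,f_r,f)$ with the $f_i$ only required to be distinct and $v_i\in f_i$, and a related out-configuration is $(e,e_1,\ldots,e_r)$ where $e_i=(f_i\setminus\{v_i\})\cup\{u_i\}$ for some $u_i\in f$. This change is not cosmetic; it is what makes the per-$G$ lower bound on in-configurations immediate (no codegree terms at all: one gets $(d-r)^r(nd/r-r^2d)$ directly) and what makes the $H$ and $H'$ bad cases controllable by a direct count, so that \cref{lema:switch1} is needed only for the single case ``$e_i\in G$''.

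With the \cref{lema:switch1} switchings, your ``purely combinatorial'' bound of order $\Delta(H')/d$ for the contribution of $e_i\in H'$ is not valid when $r\ge 3$. A transversal $h\in H'$ of $(f_1,\ldots,f_r)$ pins down one vertex of \emph{each} $f_j$, so the number of in-configurations in $G$ having $h$ as a transversal is $\sum_\pi\prod_j d_G(v_j,h_{\pi(j)})$, a codegree product with no useful deterministic bound. Equivalently, for a fixed $\Lambda_{\overline e}$ the number of bad related $\Lambda_e$ can be as large as a constant fraction of $\lambda_{\text{out}}=((r-1)!)^r$ whenever a single transversal lies in $H'$, and you have no per-$G$ control on how many $\Lambda_{\overline e}$ this happens for; the crude count gives a relative loss of order $\Delta(H')$, not $\Delta(H')/d$. (Your claim \emph{is} correct for $r=2$, where the unique transversal $\{w_1,w_2\}$ fixes both $f_1,f_2$ up to membership in $G$, and one gets at most $d\cdot\Delta(H')$ bad pairs directly.) The same issue afflicts the ``$O(r^2)$'' additive constant you invoke in stage one: for $r\ge3$, forbidding $O(r^2)$ vertices from $f_i$ does not exclude $O(r^2)$ edges but rather a codegree-dependent number.

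That said, your overall strategy can be salvaged: if you treat the $e_i\in H'$ contribution (and the stage-one overlaps) by the \emph{same} averaging-over-$G$ device you already propose for $e_i\in G$ --- i.e.\ sum over abstract $(\Lambda_{\overline e},h)$ and bound $|\{G:f_1,\ldots,f_r\in G\}|$ via iterated applications of \cref{lema:switch1} with $H\cup\Lambda_{\overline e}$ in place of $H$ --- the argument goes through and in fact yields a slightly sharper error than stated (with $\Delta(H')/n^{r-1}$ in place of $\Delta(H')/d$ and no separate $1/d$ term). The paper's redesigned switchings buy a cleaner bookkeeping: because $e_i$ differs from $f_i$ in a single vertex, fixing $e_i=h\in H'$ determines $f_i$ up to $r$ choices and leaves the other $f_j$'s free, so the $H'$ loss is visibly $O(|H'|d^r)$ against a main term $\Theta(nd^{r+1})$, i.e.\ $O(\Delta(H')/d)$, without any appeal to \cref{lema:switch1}.
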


\begin{proof}
Our strategy is similar as in \cref{lema:switch1}, but we change the definition of a switching configuration.
Write $e=\{v_1,\ldots,v_r\}$.
Let $e_1,\ldots,e_r\in\binom{V}{r}$ be such that, for each $i\in[r]$, $v_i\notin e_i$ and there is a vertex $u_i\in e_i\setminus e$ such that $u_i\notin e_j$ for all $j\in[r]\setminus\{i\}$.
Let $f_1,\ldots,f_r\in\binom{V}{r}\setminus\{e\}$ be distinct such that $v_i\in f_i$, and let $f\in\binom{V}{r}$ be disjoint from $f_1,\ldots,f_r$.
We say that $\Lambda_e\coloneqq(e,e_1,\ldots,e_r)$ is an \emph{out-switching configuration} and that $\Lambda_{\overline{e}}\coloneqq(f_1,\ldots,f_r,f)$ is an \emph{in-switching configuration}.
We say that $\Lambda_e$ and $\Lambda_{\overline{e}}$ are \emph{related} if, for each $i\in[r]$, one can find a set $A_i\in\binom{V}{r-1}$ such that $e_i\cap f_i=A_i$, and $f=(e_1\setminus A_1)\cup\ldots\cup(e_r\setminus A_r)$\COMMENT{The sets $A_i$ are not necessarily disjoint.} (note that in this case we must have $A_i=f_i\setminus\{v_i\}$).
See \cref{figu:switch3} for an illustration.
Given related out- and in-switching configurations $\Lambda_e=(e,e_1,\ldots,e_r)$ and $\Lambda_{\overline{e}}=(f_1,\ldots,f_r,f)$, we will always write $A_i\coloneqq e_i\cap f_i$ and $\{u_i\}\coloneqq e_i\setminus f_i$ for $i\in[r]$.
It is easy to check that this definition of $u_i$ implies that $\{u_i\}=e_i\cap f$\COMMENT{If $u_i\coloneqq e_i\setminus f_i$ then clearly $u_i\in e_i\cap f$. Conversely, if $|e_i\cap f|\geq2$ then $u_j\in e_i\cap f$ for some $u_j=e_j\setminus f_j$ with $j\neq i$. But then $u_j\in A_i\subseteq f_i$, a contradiction to $f_i\cap f=\varnothing$. In particular, this argument shows that $u_j\notin e_i$ for all $i\neq j$. Finally, $u_i\in f$ and so $u_i\notin f_1\cup\ldots\cup f_r$, thus $u_i\notin e$. Thus the $u_i$ defined as $u_i\coloneqq e_i\setminus f_i$ are indeed as in the definition of an out-switching configuration.} and $u_i\notin e_j$ for all $j\in[r]\setminus\{i\}$.
So $u_i$ is indeed as required in the definition of an out-switching configuration.

Given $\Lambda_e=(e,e_1,\ldots,e_r)$, we denote the number of in-switching configurations related to $\Lambda_e$ by $\lambda_{\text{in}}(\Lambda_e)$.
We claim that
\begin{equation}\label{equa:swithc2eq1}
\lambda_{\text{in}}(\Lambda_e)\leq r^r.
\end{equation}
Indeed, in order to obtain an in-switching configuration $\Lambda_{\overline{e}}=(f_1,\ldots,f_r,f)$ related to $\Lambda_e$ one has to choose $u_i\in e_i$ for each $i\in[r]$. There are at most $r$ choices for each $u_i$.
Each (admissible) choice of $u_i$ uniquely determines $f_i$, and thus they determine $f$.

Similarly, given $\Lambda_{\overline{e}}=(f_1,\ldots,f_r,f)$, we denote the number of out-switching configurations related to $\Lambda_{\overline{e}}$ by $\lambda_{\text{out}}=\lambda_{\text{out}}(\Lambda_{\overline{e}})$.
We claim that
\begin{equation}\label{equa:swithc2eq2}
\lambda_{\text{out}}=r!.
\end{equation}
This holds because, for each $i\in[r]$, the edge $e_i$ must contain $f_i\setminus\{v_i\}=A_i$ and one vertex $u_i\in f$, hence each permutation of the labels of the vertices in $f$ results in a different $\Lambda_e$.

\begin{figure}
\begin{tikzpicture}[scale=0.7]
\draw [blue, thick, fill=blue, fill opacity=0.25] (0,0) circle (4.44264068712cm);
\draw [blue, thick, fill=white, fill opacity=1] (0,0) circle (4.04264068712cm);
\draw (1,1) node[circle,fill,inner sep=1pt]{};
\draw (-1,1) node[circle,fill,inner sep=1pt]{};
\draw (1,-1) node[circle,fill,inner sep=1pt]{};
\draw (-1,-1) node[circle,fill,inner sep=1pt]{};
\draw [red, thick, rounded corners, fill=red, fill opacity=0.05] (1.2,1.2) -- (1.2,-1.2) -- (-1.2,-1.2) -- (-1.2,1.2) -- cycle;
\node at (0,0) {$e$};
\node at (0.7,0.7) {$v_1$};
\node at (0.7,-0.7) {$v_4$};
\node at (-0.7,-0.7) {$v_3$};
\node at (-0.7,0.7) {$v_2$};
\node at (1.5,1.5) {$f_1$};
\node at (1.25,-1.5) {$f_4$};
\node at (-1.25,-1.5) {$f_3$};
\node at (-1.5,1.5) {$f_2$};
\node at (2.5,2.5) {$e_1$};
\node at (2.5,-2.5) {$e_4$};
\node at (-2.5,-2.5) {$e_3$};
\node at (-2.5,2.5) {$e_2$};
\node at (3.35,3.35) {$u_1$};
\node at (3.35,-3.35) {$u_4$};
\node at (-3.35,-3.35) {$u_3$};
\node at (-3.35,3.35) {$u_2$};
\node at (1.5,2.5) {$A_1$};
\node at (2.5,-1.35) {$A_4$};
\node at (-2.5,-1.35) {$A_3$};
\node at (-1.5,2.5) {$A_2$};
\node at (4.25,0) {$f$};
\draw (3,1) node[circle,fill,inner sep=1pt]{};
\draw (1,3) node[circle,fill,inner sep=1pt]{};
\draw (2,2) node[circle,fill,inner sep=1pt]{};
\draw [blue, thick, fill=blue, fill opacity=0.25] (0.8,1) arc [radius=0.2 cm, start angle=180, end angle=270] -- (3,0.8) arc [radius=0.2 cm, start angle=-90, end angle=45] -- (1.14142135623,3.14142135623) arc [radius=0.2 cm, start angle=45, end angle=180] -- cycle;
\draw (-1,3) node[circle,fill,inner sep=1pt]{};
\draw (-3,1) node[circle,fill,inner sep=1pt]{};
\draw (-2,2) node[circle,fill,inner sep=1pt]{};
\draw [blue, thick, fill=blue, fill opacity=0.25] (-0.8,1) arc [radius=0.2 cm, start angle=0, end angle=-90] -- (-3,0.8) arc [radius=0.2 cm, start angle=270, end angle=135] -- (-1.14142135623,3.14142135623) arc [radius=0.2 cm, start angle=135, end angle=0] -- cycle;
\draw (-3,-1) node[circle,fill,inner sep=1pt]{};
\draw (-1.5,-2) node[circle,fill,inner sep=1pt]{};
\draw (0,-3) node[circle,fill,inner sep=1pt]{};
\draw (1.5,-2) node[circle,fill,inner sep=1pt]{};
\draw (3,-1) node[circle,fill,inner sep=1pt]{};
\draw [blue, thick, fill=blue, fill opacity=0.25] (-0.8211145618,-0.9105572809) arc [radius=0.2 cm, start angle=26.5650512, end angle=90] -- (-3,-0.8) arc [radius=0.2 cm, start angle=90, end angle=236.3099325] -- (-0.11094003924,-3.16641005886) arc [radius=0.2 cm, start angle=236.3099325, end angle=386.5650512] -- cycle;
\draw [blue, thick, fill=blue, fill opacity=0.25] (0.8211145618,-0.9105572809) arc [radius=0.2 cm, start angle=153.4349488, end angle=90] -- (3,-0.8) arc [radius=0.2 cm, start angle=90, end angle=-56.3099325] -- (0.11094003924,-3.16641005886) arc [radius=0.2 cm, start angle=-56.3099325, end angle=-206.5650512] -- cycle;
\draw (3,3) node[circle,fill,inner sep=1pt]{};
\draw (-3,3) node[circle,fill,inner sep=1pt]{};
\draw (3,-3) node[circle,fill,inner sep=1pt]{};
\draw (-3,-3) node[circle,fill,inner sep=1pt]{};
\draw [red, thick, fill=red, fill opacity=0.05] (3.2,3) arc [radius=0.2 cm, start angle=0, end angle=90] -- (1,3.2) arc [radius=0.2 cm, start angle=90, end angle=225] -- (2.85857864377,0.85857864377) arc [radius=0.2 cm, start angle=225, end angle=360] -- cycle;
\draw [red, thick, fill=red, fill opacity=0.05] (-3.2,3) arc [radius=0.2 cm, start angle=180, end angle=90] -- (-1,3.2) arc [radius=0.2 cm, start angle=90, end angle=-45] -- (-2.85857864377,0.85857864377) arc [radius=0.2 cm, start angle=-45, end angle=-180] -- cycle;
\draw [red, thick, fill=red, fill opacity=0.05] (3.2,-3) arc [radius=0.2 cm, start angle=0, end angle=-90] -- (0,-3.2) arc [radius=0.2 cm, start angle=-90, end angle=-236.30993245] -- (2.88905996076,-0.83358994114) arc [radius=0.2 cm, start angle=-236.30993245, end angle=-360] -- cycle;
\draw [red, thick, fill=red, fill opacity=0.05] (-3.2,-3) arc [radius=0.2 cm, start angle=180, end angle=270] -- (0,-3.2) arc [radius=0.2 cm, start angle=270, end angle=416.3099325] -- (-2.88905996076,-0.83358994114) arc [radius=0.2 cm, start angle=416.3099325, end angle=540] -- cycle;
\end{tikzpicture}
\caption{Representation of a switching for \cref{lema:switch2} in the case $r=4$. Shaded (blue) edges represent an in-switching configuration, while clear (red) ones represent an out-switching configuration.}\label{figu:switch3}
\end{figure}
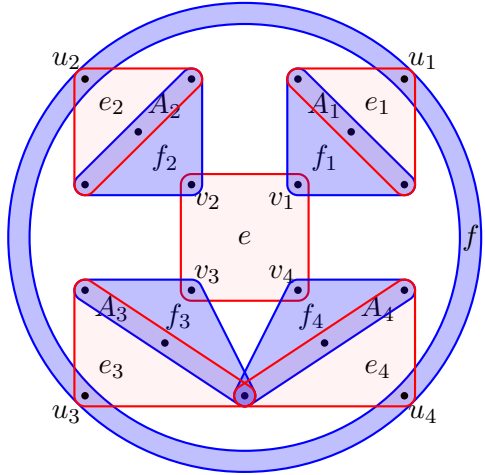

We define $\psi(G,\Lambda_e,\Lambda_{\overline{e}})$, $\mathcal{F}_e$, $\mathcal{F}_{\overline{e}}$ and $\Gamma$ as in the proof of \cref{lema:switch1}.
As before, neither out- nor in-switchings on an $r$-graph $G$ change the vertex degrees.

Assume first that $G\in\mathcal{F}_e$.
Let $S_{\text{out}}(G)$ be the number of possible out-switchings $(G,\Lambda_e,\Lambda_{\overline{e}})$ on $G$ satisfying that $\psi(G,\Lambda_e,\Lambda_{\overline{e}})\in\mathcal{G}_{n,d,H,H'}^{(r)}$.
Thus $\deg_{\Gamma}(G)=S_{\text{out}}(G)$.
Let $S_{\text{out}}\coloneqq\sum_{G\in\mathcal{F}_e}S_{\text{out}}(G)$ be the number of edges incident to $\mathcal{F}_e$ in $\Gamma$.
We claim that 
\begin{equation}\label{equa:switch2up1}
S_{\text{out}}(G)\leq(nd)^r.
\end{equation}
Indeed, \eqref{equa:swithc2eq1} implies that $S_{\text{out}}(G)$ is at most the number of out-switching configurations $\Lambda_e\subseteq G$ multiplied by $r^r$.
The number of such out-switching configurations is given by the choice of $(e_1,\ldots,e_r)$, so there are at most $(nd/r)^r$ such configurations.
This yields \eqref{equa:switch2up1}.
As this is true for every $G$,
\begin{equation}\label{equa:switch2up}
S_{\text{out}}\leq|\mathcal{F}_e|(nd)^r.
\end{equation}

Consider now any $r$-graph $G\in\mathcal{F}_{\overline{e}}$.
Let $S_{\text{in}}(G)$ be the number of possible in-switchings $(G,\Lambda_{\overline{e}},\Lambda_e)$ on $G$ satisfying that $\psi(G,\Lambda_{\overline{e}},\Lambda_e)\in\mathcal{G}_{n,d,H,H'}^{(r)}$.
Thus $\deg_{\Gamma}(G)=S_{\text{in}}(G)$.
Let $S_{\text{in}}\coloneqq\sum_{G\in\mathcal{F}_{\overline{e}}}S_{\text{in}}(G)$ be the number of edges incident to $\mathcal{F}_{\overline{e}}$ in $\Gamma$.
Let $T_{\text{in}}(G)$ denote the number of in-switching configurations $\Lambda_{\overline{e}}\subseteq G$.
As an in-switching configuration is given by $r$ edges, one incident to each of the vertices of $e$, and one more edge which is disjoint from the previous ones, by choosing each edge in turn and taking into consideration that $G$ is $d$-regular, we conclude that
\begin{equation}\label{equa:switch2up2}
T_{\text{in}}(G)\leq \frac{nd^{r+1}}{r}.
\end{equation}
For a lower bound on $T_{\text{in}}(G)$, observe that there are exactly $d$ choices for $f_1$\COMMENT{This is because, by assumption, $G\in\mathcal{F}_{\overline{e}}$ so $e\notin G$.}.
Then, $f_2$ can be chosen in at least $d-1$ ways\COMMENT{, considering that the chosen $f_1$ might also contain $v_2$ and $f_2$ must be distinct from $f_1$}.
More generally, there are at least $(d-r)^r$ choices for $(f_1,\ldots,f_r)$.
Finally, $f$ must be chosen disjoint from $f_1,\ldots,f_r$, so there are at least $nd/r-r^2d$ choices.
Overall,
\begin{equation}\label{equa:switch2low2}
T_{\text{in}}(G)\geq(d-r)^r\left(\frac{nd}{r}-r^2d\right)=\frac{nd^{r+1}}{r}\left(1-\bigO\left(\frac{1}{d}+\frac{1}{n}\right)\right).
\end{equation}

We say that an in-switching configuration $\Lambda_{\overline{e}}\subseteq G$ is \emph{good} (for $G$) if there are $\lambda_{\text{out}}$ out-switching configurations $\Lambda_e\subseteq\overline{G}$ related to $\Lambda_{\overline{e}}$ which satisfy $\psi(G,\Lambda_{\overline{e}},\Lambda_e)\in\mathcal{G}_{n,d,H,H'}^{(r)}$.
We say that $\Lambda_{\overline{e}}$ is \emph{bad} (for $G$) otherwise.
An in-switching configuration $\Lambda_{\overline{e}}=(f_1,\ldots,f_r,f)$ is bad for $G$ if and only if any of the following occur:
\begin{enumerate}[label=(\alph*)]
\item $(f_i\setminus\{v_i\})\cup\{v\}\in H$ for some $i\in[r]$ and $v\in f$. \label{badcond1}
\item $(f_i\setminus\{v_i\})\cup\{v\}\in H'$ for some $i\in[r]$ and $v\in f$.\label{badcond3}
\item $f_i\in H$ for some $i\in[r]$ or $f\in H$. \label{badcond2}
\item Neither \ref{badcond1} nor \ref{badcond3} hold, but $(f_i\setminus\{v_i\})\cup\{v\}\in G$ for some $i\in[r]$ and $v\in f$. \label{badcond15}
\end{enumerate}

For each $G\in\mathcal{F}_{\overline{e}}$, let $\mathcal{L}(G)$ denote the set of in-switching configurations $\Lambda_{\overline{e}}$ with $\Lambda_{\overline{e}}\subseteq G$.
Consider the set $\varOmega\coloneqq\{(G,\Lambda_{\overline{e}})\mid G\in\mathcal{F}_{\overline{e}},\Lambda_{\overline{e}}\in\mathcal{L}(G)\}$\COMMENT{
Note that the subsets of $\varOmega$ defined by each $\Lambda_{\overline{e}}$ define a partition of $\varOmega$}.
We say that a pair $(G,\Lambda_{\overline{e}})$ is \emph{bad} if $\Lambda_{\overline{e}}$ is bad for $G$.

Let $k\coloneqq\Delta(H)$, $k'\coloneqq\Delta(H')$.
We first count the number of in-switching configurations in $\mathcal{L}(G)$ which are bad because of \ref{badcond1}--\ref{badcond2}.
For this, fix an $r$-graph $G\in\mathcal{F}_{\overline{e}}$.
Let $T_\text{a}(G)$ be the number of in-switching configurations which are bad because of \ref{badcond1}.
Fix $e^*\in H$ and $i\in[r]$.
To count the number of in-switching configurations $\Lambda_{\overline{e}}=(f_1,\ldots,f_r,f)\in\mathcal{L}(G)$ with $(f_i\setminus\{v_i\})\cup\{v\}=e^*$ for some $v\in f$, note that there are at most $r$ choices for $v$, and then at most $d$ choices for $f$ (since $v\in f$).
Then we must have $f_i=(e^*\setminus\{v\})\cup\{v_i\}$.
Finally, there are at most $d$ choices for each $f_j$ with $j\in[r]\setminus\{i\}$ (since $v_j\in f_j$).
Therefore, $T_\text{a}(G)\leq |H|\cdot r\cdot r\cdot d\cdot d^{r-1}\leq rnkd^r$.
Let $T_\text{a}\coloneqq\sum_{G\in\mathcal{F}_{\overline{e}}}T_\text{a}(G)$ be the number of pairs $(G,\Lambda_{\overline{e}})$ which are bad because of \ref{badcond1}.
Then,
\begin{equation}\label{equa:abad}
T_\text{a}\leq |\mathcal{F}_{\overline{e}}|rnkd^r.
\end{equation}

Similarly, for $G\in\mathcal{F}_{\overline{e}}$, let $T_\text{b}(G)$ be the number of in-switching configurations which are bad because of \ref{badcond3}.
As above, one can show that $T_\text{b}(G)\leq |H'|\cdot r\cdot r\cdot d\cdot d^{r-1}\leq rnk'd^r$.
Let $T_\text{b}\coloneqq\sum_{G\in\mathcal{F}_{\overline{e}}}T_\text{b}(G)$ be the number of pairs $(G,\Lambda_{\overline{e}})$ which are bad because of \ref{badcond3}.
Then,
\begin{equation}\label{equa:cbad}
T_\text{b}\leq |\mathcal{F}_{\overline{e}}|rnk'd^r.
\end{equation}

Next, for $G\in\mathcal{F}_{\overline{e}}$, let $T_\text{c}(G)$ be the number of in-switching configurations which are bad because of \ref{badcond2}. 
Given $i\in[r]$, there are at most $k$ choices for $f_i\in H$ (as $v_i\in f_i$), and the remaining edges in the in-switching configuration can be chosen in at most $d^{r-1}nd/r$ ways.
Similarly, if $f\in H$, then the remaining edges in the in-switching configuration can be chosen in at most $d^r$ ways.
Therefore, $T_\text{c}(G)\leq r\cdot k\cdot d^{r-1}nd/r+|H|\cdot d^r\leq (r+1)nkd^r/r$.
Let $T_\text{c}\coloneqq\sum_{G\in\mathcal{F}_{\overline{e}}}T_\text{c}(G)$ be the number of pairs $(G,\Lambda_{\overline{e}})$ which are bad because of \ref{badcond2}.
Then,
\begin{equation}\label{equa:bbad}
T_\text{c}\leq |\mathcal{F}_{\overline{e}}|\frac{(r+1)nkd^r}{r}.
\end{equation}

Finally, we count the number of in-switching configurations which are bad because of \ref{badcond15}.
For this, fix $\Lambda_{\overline{e}}=(f_1,\ldots,f_r,f)\in\bigcup_{G\in\mathcal{F}_{\overline{e}}}\mathcal{L}(G)$.
Note that this implies that $\Lambda_{\overline{e}}\cap H'=\varnothing$.
We now apply \cref{lema:switch1} with $H\cup\Lambda_{\overline{e}}$ playing the role of $H$ and $H'\cup\{e\}$ playing the role of $H'$ to bound the number of pairs $(G,\Lambda_{\overline{e}})$ that are bad because of \ref{badcond15}.
We denote this number by $T_\text{d}$.
\Cref{lema:switch1} implies that, for any $\hat{e}\in\binom{V}{r}\setminus(H\cup H'\cup\Lambda_{\overline{e}}\cup\{e\})$,
\[\mathbb{P}\left[\hat{e}\in G_{n,d}^{(r)}\mid\mathcal{G}_{n,d,H\cup\Lambda_{\overline{e}},H'\cup\{e\}}^{(r)}\right]\leq2(r-1)!\frac{d}{n^{r-1}}\COMMENT{We have $\mathbb{P}\left[\hat{e}\in G_{n,d}^{(r)}\mid\mathcal{G}_{n,d,H\cup\Lambda_{\overline{e}},H'\cup\{e\}}^{(r)}\right]\leq(r-1)!\frac{d}{n^{r-1}}\left(1+\bigO\left(\frac{1}{n}+\frac{d}{n^{r-1}}+\frac{k}{d}+\frac{k'}{n^{r-1}}\right)\right)\leq2(r-1)!\frac{d}{n^{r-1}}$, where the $k/d$ term comes from observing that $|H\cup\Lambda_{\overline{e}}|\leq kn/r+r+1$.}.\]
In particular, this holds for all $r$-sets of the form $(f_i\setminus\{v_i\})\cup\{v\}$ for some $i\in[r]$ and $v\in f$ (as long as they are not in $H$ or $H'$, which is guaranteed for condition \ref{badcond15}).
Therefore, a union bound yields an upper bound on the probability that $\Lambda_{\overline{e}}$ is bad for $G$  because of \ref{badcond15}.
Indeed, let $\mathcal{B}(G,\Lambda_{\overline{e}})$ denote the event that the pair $(G,\Lambda_{\overline{e}})$ is bad because of \ref{badcond15}.
Then\COMMENT{If $\Lambda_{\overline{e}}$ is bad because of a or b, the probability is $0$. Otherwise,},
\begin{equation}\label{equa:refereeadded1}
\mathbb{P}\left[\mathcal{B}(G_{n,d}^{(r)},\Lambda_{\overline{e}})\mid\mathcal{G}_{n,d,H\cup\Lambda_{\overline{e}},H'\cup\{e\}}^{(r)}\right]\leq 2r^2(r-1)!\frac{d}{n^{r-1}}.
\end{equation}
The same approach works for all $\Lambda_{\overline{e}}$.
By \eqref{equa:switch2up2} we have that $|\varOmega|\leq |\mathcal{F}_{\overline{e}}|nd^{r+1}/{r}$.
Moreover, note that 
\begin{equation}\label{equa:refereeadded2}
|\varOmega|=\sum_{\Lambda_{\overline{e}}\in\bigcup_{G\in\mathcal{F}_{\overline{e}}}\mathcal{L}(G)}|\mathcal{G}^{(r)}_{n,d,H\cup\Lambda_{\overline{e}},H'\cup\{e\}}|.
\end{equation}
Hence, for the number $T_\text{d}$ of pairs that are bad because of \ref{badcond15}, by \eqref{equa:refereeadded1} and \eqref{equa:refereeadded2} it follows that
\begin{equation}\label{equa:abadpairs}
T_\text{d}=\sum_{\Lambda_{\overline{e}}\in\bigcup_{G\in\mathcal{F}_{\overline{e}}}\mathcal{L}(G)}|\mathcal{G}^{(r)}_{n,d,H\cup\Lambda_{\overline{e}},H'\cup\{e\}}|\cdot\mathbb{P}\left[\mathcal{B}(G_{n,d}^{(r)},\Lambda_{\overline{e}})\mid\mathcal{G}_{n,d,H\cup\Lambda_{\overline{e}},H'\cup\{e\}}^{(r)}\right]\leq|\mathcal{F}_{\overline{e}}|2r!\frac{d^{r+2}}{n^{r-2}}.
\end{equation}

By \eqref{equa:switch2low2} we have that $|\varOmega|\geq|\mathcal{F}_{\overline{e}}|\frac{nd^{r+1}}{r}\left(1-\bigO\left(\frac{1}{d}+\frac{1}{n}\right)\right)$.
Let $\varepsilon\coloneqq\varepsilon(n,d,k,k')=\frac{1}{n}+\frac{1}{d}+\frac{d}{n^{r-1}}+\frac{k}{d}+\frac{k'}{d}$.
By \eqref{equa:swithc2eq2} and \eqref{equa:abad}--\eqref{equa:abadpairs}, we conclude that
\begin{equation}\label{equa:switch2low}
S_{\text{in}}\geq\lambda_{\text{out}}(|\varOmega|-T_\text{a}-T_\text{b}-T_\text{c}-T_\text{d})=\COMMENT{$\lambda_{\text{out}}(|\varOmega|-T_\text{a}-T_\text{b}-T_\text{c}-T_\text{d})\geq r!\left(|\mathcal{F}_{\overline{e}}|\frac{nd^{r+1}}{r}\left(1-\bigO\left(\frac{1}{d}+\frac{1}{n}\right)\right)-2|\mathcal{F}_{\overline{e}}|r!\frac{d^{r+2}}{n^{r-2}}\right.$ $\left.-|\mathcal{F}_{\overline{e}}|\frac{(r+1)nkd^r}{r}-|\mathcal{F}_{\overline{e}}|rnkd^r-|\mathcal{F}_{\overline{e}}|rnk'd^r\right)=|\mathcal{F}_{\overline{e}}|(r-1)!nd^{r+1}\left(1-\bigO\left(\varepsilon\right)\right)$}|\mathcal{F}_{\overline{e}}|(r-1)!nd^{r+1}\left(1-\bigO\left(\varepsilon\right)\right).
\end{equation}
Combining \eqref{equa:switch2up} and \eqref{equa:switch2low}, we conclude that
\[|\mathcal{F}_{\overline{e}}|(r-1)!nd^{r+1}\left(1-\bigO\left(\varepsilon\right)\right)\leq S_{\text{in}}=S_{\text{out}}\leq|\mathcal{F}_e|(nd)^r.\]
Dividing this by $|\mathcal{G}_{n,d,H,H'}^{(r)}|$ implies that
\[(r-1)!nd^{r+1}\left(1-\bigO\left(\varepsilon\right)\right)\mathbb{P}\left[e\notin G_{n,d}^{(r)}\mid\mathcal{G}_{n,d,H,H'}^{(r)}\right]\leq(nd)^r\mathbb{P}\left[e\in G_{n,d}^{(r)}\mid\mathcal{G}_{n,d,H,H'}^{(r)}\right].\]
Taking into account that $\mathbb{P}[e\notin G_{n,d}^{(r)}\mid\mathcal{G}_{n,d,H,H'}^{(r)}]=1-\mathbb{P}[e\in G_{n,d}^{(r)}\mid\mathcal{G}_{n,d,H,H'}^{(r)}]$, we conclude that
\[\mathbb{P}\left[e\in G_{n,d}^{(r)}\mid\mathcal{G}_{n,d,H,H'}^{(r)}\right]\geq\COMMENT{$\geq\frac{(r-1)!nd^{r+1}\left(1-\bigO\left(\varepsilon\right)\right)}{(nd)^r\left(1+\frac{(r-1)!nd^{r+1}\left(1-\bigO\left(\varepsilon\right)\right)}{(nd)^r}\right)}$}(r-1)!\frac{d}{n^{r-1}}\left(1-\bigO\left(\varepsilon\right)\right).\qedhere\]
\end{proof}

Together, \cref{lema:switch1} and \cref{lema:switch2} imply the following result.

\begin{corollary}\label{coro:switchprob}
Let\/ $r\geq2$ be a fixed integer.
Suppose that\/ $d=\omega(1)$ and\/ $d=o(n^{r-1})$.
Let\/ $H,H'\subseteq\binom{V}{r}$ be two edge-disjoint\/ $r$-graphs such that\/ $\Delta(H),\Delta(H')=o(d)$.
Then, for all\/ $e\in\binom{V}{r}\setminus(H\cup H')$ we have
\[\mathbb{P}\left[e\in G_{n,d}^{(r)}\mid\mathcal{G}_{n,d,H,H'}^{(r)}\right]=(r-1)!\frac{d}{n^{r-1}}\left(1\pm\bigO\left(\frac{1}{n}+\frac{1}{d}+\frac{d}{n^{r-1}}+\frac{\Delta(H)}{d}+\frac{\Delta(H')}{d}\right)\right).\]
\end{corollary}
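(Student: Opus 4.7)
The plan is to obtain the corollary by directly combining the upper bound from Lemma \ref{lema:switch1} with the lower bound from Lemma \ref{lema:switch2}, after checking that the hypotheses of each lemma are satisfied under the assumptions of the corollary, and that the error terms can be absorbed into the cleaner form stated here.

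For the lower bound, the hypotheses of Lemma \ref{lema:switch2} are exactly those assumed in the corollary, and its conclusion already has the required shape with error term $O(1/n + 1/d + d/n^{r-1} + \Delta(H)/d + \Delta(H')/d)$, so nothing further is needed there. For the upper bound, I first need to verify the hypotheses of Lemma \ref{lema:switch1}, namely $|H| = o(nd)$ and $\Delta(H') = o(n^{r-1})$. The trivial bound $|H| \leq n\Delta(H)/r$ together with $\Delta(H) = o(d)$ gives $|H| = o(nd)$; and $\Delta(H') = o(d) = o(n^{r-1})$ since $d = o(n^{r-1})$. So Lemma \ref{lema:switch1} yields the upper bound, but with error term $O(1/n + d/n^{r-1} + |H|/(nd) + \Delta(H')/n^{r-1})$.

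It remains to absorb the last two summands into $O(\Delta(H)/d + \Delta(H')/d)$. For the first, $|H|/(nd) \leq (n\Delta(H)/r)/(nd) = O(\Delta(H)/d)$. For the second, since $d = o(n^{r-1})$ we have $1/n^{r-1} = O(1/d)$, so $\Delta(H')/n^{r-1} = O(\Delta(H')/d)$. Hence the error term from Lemma \ref{lema:switch1} is dominated by the one in the corollary's statement, and combining the two one-sided estimates gives the claimed two-sided bound.

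I do not foresee any real obstacle here: the work has already been done in Lemmas \ref{lema:switch1} and \ref{lema:switch2}, and the only thing to watch is the bookkeeping in the error-term comparison above (in particular, remembering to use $|H| \leq n\Delta(H)/r$ and the density regime $d = o(n^{r-1})$ to swallow the switch1 error into the switch2-style error).
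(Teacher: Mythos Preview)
Your proposal is correct and matches the paper's approach exactly: the paper simply states that the corollary follows by combining Lemmas \ref{lema:switch1} and \ref{lema:switch2}, and your verification of the hypotheses and absorption of the error terms (via $|H|\leq n\Delta(H)/r$ and $d=o(n^{r-1})$) is precisely the bookkeeping one needs to spell this out.
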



\section{Counting subgraphs of random regular $r$-graphs}\label{section3}

In this section we use the results of \cref{section2} to count the number of copies of certain $r$-graphs $F$ inside a random $d$-regular $r$-graph.
In \cref{section31} we consider the case when $F$ is fixed.
In particular, we will derive results on the number of edge-disjoint copies of $F$, which will be used in \cref{section42}.
In \cref{section32} we apply our results to count the number of copies of sparse but possibly spanning $r$-graphs\COMMENT{Results extend to almost spanning and, in general, to any subgraph with the needed conditions} such as Hamilton cycles.


\subsection{Counting small subgraphs}\label{section31}

For an $r$-graph $F$, let $\mathrm{aut}(F)$ denote the number of automorphisms of $F$.
Let $X_F(G)$ denote the number of (unlabelled) copies of $F$ in an $r$-graph $G$.
We will often just write $X_F$ whenever $G$ is clear from the context.
Observe that $X_F$ is a random variable whenever $G$ is randomly chosen from some set $\mathcal{G}$.
We will consider the uniform distribution on the set $\mathcal{G}_{n,d}^{(r)}$.
Furthermore, we define \[p\coloneqq(r-1)!\frac{d}{n^{r-1}}\qquad\text{ and }\qquad\varepsilon_{n,d}\coloneqq\frac{1}{n}+\frac{1}{d}+\frac{d}{n^{r-1}}.\]

\begin{corollary}\label{coro:expectation}
Let\/ $r\geq2$ and\/ $t\geq1$ be fixed integers, and let\/ $F$ be a fixed\/ $r$-graph.
Suppose that\/ $d=\omega(1)$ and\/ $d=o(n^{r-1})$.
Then, 
\begin{enumerate}[label=(\roman*)]
\item for any set\/ $\mathcal{E}\subseteq\binom{V}{r}$ of size\/ $t$,\/ $\displaystyle\mathbb{P}[\mathcal{E}\subseteq G_{n,d}^{(r)}]=p^t\left(1\pm\bigO\left(\varepsilon_{n,d}\right)\right)$,\label{item1}
\item $\displaystyle\mathbb{E}[X_F]=\binom{n}{v_F}\frac{v_F!}{\mathrm{aut}(F)}p^{e_F}\left(1\pm\bigO\left(\varepsilon_{n,d}\right)\right)$.\label{item3}
\end{enumerate}
\end{corollary}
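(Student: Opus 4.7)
The plan is to derive (i) by a chain rule argument applied iteratively to Corollary~\ref{coro:switchprob}, and then to deduce (ii) from (i) by a standard linearity of expectation argument over labelled copies of $F$.

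For part (i), I would enumerate $\mathcal{E}=\{e_1,\ldots,e_t\}$ in an arbitrary order and write
\[
\mathbb{P}\left[\mathcal{E}\subseteq G_{n,d}^{(r)}\right]=\prod_{i=1}^{t}\mathbb{P}\left[e_i\in G_{n,d}^{(r)}\mid\mathcal{G}_{n,d,H_{i-1},\varnothing}^{(r)}\right],
\]
where $H_{i-1}\coloneqq\{e_1,\ldots,e_{i-1}\}$ and $H_0\coloneqq\varnothing$. Since $t$ is a fixed constant and each $H_{i-1}$ is an $r$-graph on at most $rt$ vertices, we have $\Delta(H_{i-1})\leq t=O(1)=o(d)$, so Corollary~\ref{coro:switchprob} applies to each factor (with $H'=\varnothing$) and yields
\[
\mathbb{P}\left[e_i\in G_{n,d}^{(r)}\mid\mathcal{G}_{n,d,H_{i-1},\varnothing}^{(r)}\right]=p\bigl(1\pm O(\varepsilon_{n,d})\bigr).
\]
Multiplying the $t=O(1)$ factors and using $(1\pm O(\varepsilon_{n,d}))^t=1\pm O(\varepsilon_{n,d})$ (since $\varepsilon_{n,d}=o(1)$ as $d=\omega(1)$ and $d=o(n^{r-1})$) gives the claim. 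One should also observe that the conditioning event $\mathcal{G}_{n,d,H_{i-1},\varnothing}^{(r)}$ is non-empty for all $i\leq t$, which follows inductively from the fact that the conditional probability above is strictly positive.

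For part (ii), I would count labelled copies. Let $Y_F(G)$ denote the number of injective homomorphisms from $F$ into $G$; then $X_F=Y_F/\mathrm{aut}(F)$. For each injective map $\phi\colon V(F)\to V$, the event that $\phi$ is an embedding is exactly the event that the $e_F$-set of edges $\phi(E(F))\subseteq\binom{V}{r}$ is contained in $G_{n,d}^{(r)}$. Applying part (i) with $t=e_F$ and $\mathcal{E}=\phi(E(F))$ gives
\[
\mathbb{P}\left[\phi\text{ is an embedding}\right]=p^{e_F}\bigl(1\pm O(\varepsilon_{n,d})\bigr).
\]
There are exactly $n(n-1)\cdots(n-v_F+1)=\binom{n}{v_F}v_F!$ injective maps $V(F)\to V$, so linearity of expectation yields
\[
\mathbb{E}[X_F]=\frac{1}{\mathrm{aut}(F)}\binom{n}{v_F}v_F!\,p^{e_F}\bigl(1\pm O(\varepsilon_{n,d})\bigr),
\]
as required.

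The argument is essentially bookkeeping on top of Corollary~\ref{coro:switchprob}; the only subtlety is ensuring the error compounding in (i) stays at the level $O(\varepsilon_{n,d})$, which is immediate since $t$ and $e_F$ are fixed constants independent of $n$. There is no real obstacle beyond checking that the hypotheses $\Delta(H),\Delta(H')=o(d)$ of Corollary~\ref{coro:switchprob} hold at each step, which is trivial here since $H'$ is empty and $|H|$ is bounded by a constant.
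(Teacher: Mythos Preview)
Your proposal is correct and follows essentially the same approach as the paper: iterate Corollary~\ref{coro:switchprob} over the edges of $\mathcal{E}$ (with $H'=\varnothing$ and $H$ the set of previously conditioned edges) to obtain (i), and then deduce (ii) by linearity of expectation over the $\binom{n}{v_F}v_F!/\mathrm{aut}(F)$ possible copies of $F$. The paper's proof is just a terse sketch of exactly this argument, and you have filled in the details accurately.
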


\begin{proof}
Enumerate the edges in $\mathcal{E}$ as $e_1,\ldots,e_t$.
\ref{item1} follows by applying \cref{coro:switchprob} repeatedly\COMMENT{.
In the $i$-th application we apply \cref{coro:switchprob} with an $r$-graph $H_i$ playing the role of $H$, where $H_i$ is defined by setting $H_1\coloneqq\varnothing$ and $H_i\coloneqq H_{i-1}\cup\{e_{i-1}\}$ for $i\in[t]\setminus\{1\}$}.
This in turn implies \ref{item3}\COMMENT{
In order to prove \ref{item3}, note that the number of copies $F'$ of $F$ with $F'\subseteq\binom{V}{r}$ is $\binom{n}{v_F}\frac{v_F!}{\mathrm{aut}(F)}$.
Now, the result follows by linearity of expectation and \ref{item1}.}.
\end{proof}

The next lemma implies that $X_F$ is concentrated around $\mathbb{E}[X_F]$ whenever $\Phi_F=\omega(1)$, where
\[\Phi_{F}\coloneqq\min\{\mathbb{E}[X_K]:K\subseteq F, e_K>0\}.\]

\begin{lemma}\label{lema:variance}
Let\/ $r\geq2$ be a fixed integer.
Suppose that\/ $d=\omega(1)$ and\/ $d=o(n^{r-1})$.
Then, for any fixed\/ $r$-graph\/ $F$ with\/ $e_F\geq1$, we have that\/ $\mathrm{Var}[X_F]=\bigO(\varepsilon_{n,d}+\Phi_{F}^{-1})\mathbb{E}[X_F]^2$.
\end{lemma}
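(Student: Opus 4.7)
The plan is a standard second-moment computation, using \cref{coro:expectation} in place of the independence available in the binomial model. I would expand
\[
\mathrm{Var}[X_F] = \sum_{(F_1,F_2)} \bigl(\mathbb{P}[F_1 \cup F_2 \subseteq G_{n,d}^{(r)}] - \mathbb{P}[F_1 \subseteq G_{n,d}^{(r)}]\,\mathbb{P}[F_2 \subseteq G_{n,d}^{(r)}]\bigr),
\]
where the sum runs over ordered pairs of copies of $F$ in $K_V^{(r)}$, and then split the pairs according to whether the $r$-graph intersection $J \coloneqq F_1 \cap F_2$ has any edges.

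For pairs with $e_J = 0$ (edge-disjoint copies, which may still share vertices) I would apply \cref{coro:expectation} to $\mathcal{E} = E(F_1) \cup E(F_2)$ of size $2e_F$ and separately to $E(F_1)$ and $E(F_2)$: the joint probability and the product of marginals are each $p^{2e_F}(1 \pm \bigO(\varepsilon_{n,d}))$, so their difference is $\bigO(\varepsilon_{n,d})\,p^{2e_F}$. Summing over the $\bigO(n^{2v_F})$ such ordered pairs and comparing with the expectation formula in \cref{coro:expectation} yields a total contribution of $\bigO(\varepsilon_{n,d})\,\mathbb{E}[X_F]^2$.

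For pairs with $e_J \geq 1$ I would drop the negative term and partition by the isomorphism type $[K]$ of $J$; since $F$ is fixed, only finitely many types $K \subseteq F$ with $e_K \geq 1$ arise. For each such type, \cref{coro:expectation} bounds the joint probability by $2\,p^{2e_F - e_K}$, and the number of ordered pairs with $F_1 \cap F_2 \cong K$ is $\bigO(n^{2v_F - v_K})$ (place a copy of $K$ in $\bigO(n^{v_K})$ ways and then extend to each of $F_1, F_2$ in $\bigO(n^{v_F - v_K})$ ways, with constants depending only on $F$ and $K$). Hence the contribution of type $[K]$ is $\bigO(n^{2v_F - v_K}\,p^{2e_F - e_K}) = \bigO(\mathbb{E}[X_F]^2 / \mathbb{E}[X_K]) \leq \bigO(\mathbb{E}[X_F]^2 / \Phi_F)$, and summing over the finitely many types completes the bound.

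The one subtle step is the cancellation in the edge-disjoint case: writing both probabilities with their explicit $\bigO(\varepsilon_{n,d})$ error terms, the leading $p^{2e_F}$ must cancel so that the per-pair contribution is $\bigO(\varepsilon_{n,d})\,p^{2e_F}$ rather than the naive $\bigO(p^{2e_F})$. Apart from that, everything is routine combinatorial bookkeeping, with constants depending only on the fixed $r$-graph $F$ being absorbed into the $\bigO(\cdot)$, essentially as in the graph case of \citet{KimSudaVu07}.
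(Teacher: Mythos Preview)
Your proposal is correct and follows essentially the same second-moment argument as the paper: split pairs of copies by whether they share an edge, use \cref{coro:expectation} to get the $\bigO(\varepsilon_{n,d})p^{2e_F}$ cancellation in the edge-disjoint case, and bound the edge-intersecting case by summing over the finitely many intersection types $K\subseteq F$ with $e_K\ge 1$ to obtain $\bigO(\Phi_F^{-1})\mathbb{E}[X_F]^2$. The only cosmetic difference is that the paper additionally stratifies the edge-disjoint pairs by the number of shared vertices, but this refinement is not needed for the stated bound.
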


The proof follows a straightforward second moment approach (based on \cref{coro:expectation}), so we omit the details (for a proof of the same statement in $\mathcal{G}_{n,p}$, see for instance \cite[Lemma~3.5]{RGbook00})\COMMENT{
In this proof we write $\varepsilon\coloneqq\varepsilon_{n,d}$.
Let \[\mathcal{F}\coloneqq\bigg\{F'\subseteq K_V^{(r)}:F'\cong F\bigg\}.\]
Given a set $A$, we define the function $\mathit{sgn}(A)$ by $\mathit{sgn}(\varnothing)=0$ and $\mathit{sgn}(A)=1$ for all $A\neq\varnothing$.
For $i\in\{0,1\}$, we define
\begin{align*}
\mathcal{F}_i\coloneqq\{(F',F'')\in\mathcal{F}^2:&\mathit{sgn}(E(F')\cap E(F''))=i\}
\end{align*}
and
\[Y_i=\sum_{(F',F'')\in\mathcal{F}_i}\left(\mathbb{E}[I_{F'\cup F''}]-\mathbb{E}[I_{F'}]\mathbb{E}[I_{F''}]\right),\]
where $I_{F'}$ is the indicator random variable that a fixed copy $F'$ of $F$ appears in the random $r$-graph; that is, $X_F=\sum_{F'\in\mathcal{F}}I_{F'}$.\\
We first bound $Y_0$.
For $k\in[v_F]\cup \{0\}$, define $\widehat{\mathcal{F}}(k)\coloneqq\{F_1\cup F_2:F_1,F_2\in\mathcal{F}, E(F_1)\cap E(F_2)=\varnothing, |V(F_1)\cap V(F_2)|=k\}$.
By applying \cref{coro:expectation}\ref{item1} and \ref{item3}, we compute
\begin{align}\label{equa:var6}
Y_0&=\sum_{k=0}^{v_F}\sum_{\widehat F\in\widehat{\mathcal{F}}(k)}\sum_{\substack{(F',F'')\in\mathcal{F}_0\\F'\cup F''\cong\widehat F}}\left(p^{2e_F}\left(1\pm\bigO\left(\varepsilon\right)\right)-\left(p^{e_F}\left(1\pm\bigO\left(\varepsilon\right)\right)\right)^{2}\right)\nonumber\\
&=\sum_{k=0}^{v_F}\sum_{\widehat F\in\widehat{\mathcal{F}}(k)}\bigO(\varepsilon n^{2v_F-k}p^{2e_F})=\bigO(\varepsilon\mathbb{E}[X_F]^2).
\end{align}
We now bound $Y_1$.
Let $K\subseteq F$.
Then, there are $\bigO\left(n^{2v_F-v_K}\right)$ pairs $(F',F'')\in\mathcal{F}_1$ such that $F'\cap F''\cong K$.
Thus, by \cref{coro:expectation},
\begin{align}\label{equa:var8}
Y_1\leq&\sum_{(F',F'')\in\mathcal{F}_1}\mathbb{E}[I_{F'\cup F''}]=\sum_{\substack{K\subseteq F\\e_K>0}}\sum_{\substack{K'\subseteq K_V^{(r)}\\K'\cong K}}\sum_{\substack{(F',F'')\in\mathcal{F}_1\\F'\cap F''=K'}}p^{2e_F-e_K}\left(1\pm\bigO\left(\varepsilon\right)\right)\nonumber\\
=&\sum_{\substack{K\subseteq F\\e_K>0}}\bigO\left(n^{2v_F-v_K}p^{2e_F-e_K}\right)=\mathbb{E}[X_F]^2\sum_{\substack{K\subseteq F\\e_K>0}}\bigO\left((n^{v_K}p^{e_K})^{-1}\right)=\bigO\left(\Phi_{F}^{-1}\mathbb{E}[X_F]^2\right),
\end{align}
where the final equality holds by the definition of $\Phi_F$.
We have that
\[\mathrm{Var}[X_F]=\sum_{(F',F'')\in\mathcal{F}_0\cup\mathcal{F}_1}(\mathbb{E}[I_{F'}I_{F''}]-\mathbb{E}[I_{F'}]\mathbb{E}[I_{F''}])=Y_0+Y_1.\]
Combining \eqref{equa:var6} and \eqref{equa:var8} proves the statement.}.
\Cref{coro:expectation}, \cref{lema:variance} and Chebyshev's inequality imply the following result.
In particular, this determines the threshold for the appearance of a copy of a fixed $F$ in $\mathcal{G}_{n,d}^{(r)}$.

\begin{corollary}\label{coro:copies}
Let\/ $r\geq2$ be a fixed integer.
Suppose that\/ $d=\omega(1)$ and\/ $d=o(n^{r-1})$.
Then, for any fixed\/ $r$-graph\/ $F$ with\/ $\Phi_{F}=\omega(1)$, we a.a.s.~have
\[X_F=\left(1\pm o(1)\right)\binom{n}{v_F}\frac{v_F!}{\mathrm{aut}(F)}p^{e_F}.\]
\end{corollary}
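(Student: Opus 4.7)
The plan is to apply Chebyshev's inequality, using \cref{coro:expectation}\ref{item3} to pin down the expectation and \cref{lema:variance} to bound the variance. Everything essentially amounts to checking that the error terms from the two ingredients tend to zero under the hypotheses of the corollary.

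First, I would observe that the assumptions $d=\omega(1)$ and $d=o(n^{r-1})$ force $\varepsilon_{n,d}=\frac{1}{n}+\frac{1}{d}+\frac{d}{n^{r-1}}=o(1)$. Combined with the hypothesis $\Phi_F=\omega(1)$, \cref{lema:variance} yields
\[
\mathrm{Var}[X_F]=\bigO\!\left(\varepsilon_{n,d}+\Phi_F^{-1}\right)\mathbb{E}[X_F]^2=o(\mathbb{E}[X_F]^2).
\]
Note also that since $F\subseteq F$ and $e_F\ge 1$, the hypothesis $\Phi_F=\omega(1)$ ensures $\mathbb{E}[X_F]\to\infty$.

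Second, set $\delta=\delta(n)\coloneqq (\varepsilon_{n,d}+\Phi_F^{-1})^{1/3}$, so that $\delta\to 0$ while $\delta^{-2}(\varepsilon_{n,d}+\Phi_F^{-1})\to 0$. Chebyshev's inequality then gives
\[
\mathbb{P}\!\left[\,|X_F-\mathbb{E}[X_F]|\ge \delta\,\mathbb{E}[X_F]\,\right]\le \frac{\mathrm{Var}[X_F]}{\delta^2\mathbb{E}[X_F]^2}=o(1).
\]
Hence a.a.s.\ $X_F=(1\pm\delta)\mathbb{E}[X_F]=(1\pm o(1))\mathbb{E}[X_F]$.

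Third, I would substitute the estimate from \cref{coro:expectation}\ref{item3}, namely $\mathbb{E}[X_F]=\binom{n}{v_F}\frac{v_F!}{\mathrm{aut}(F)}p^{e_F}(1\pm \bigO(\varepsilon_{n,d}))$, and absorb the multiplicative $\bigO(\varepsilon_{n,d})=o(1)$ factor into the a.a.s.\ $(1\pm o(1))$ deviation to obtain the claimed expression. There is no real obstacle here: the hard work was done in \cref{section2}, where \cref{coro:switchprob} was established, and that result already feeds directly into the expectation and second-moment estimates. The only mild point of care is coordinating the two sources of $o(1)$ error—one from the expectation and one from the Chebyshev deviation—into a single $1\pm o(1)$ multiplicative factor, which is handled by the choice of $\delta$ above.
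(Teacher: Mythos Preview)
Your proposal is correct and follows exactly the approach indicated in the paper, which simply states that the corollary follows from \cref{coro:expectation}, \cref{lema:variance} and Chebyshev's inequality. You have merely filled in the routine details of that deduction.
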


The next result adresses the problem of counting edge-disjoint copies of an $r$-graph $F$ in $G_{n,d}^{(r)}$.
Its proof builds on an idea of \citet{Kreuter96} for counting vertex-disjoint copies in the binomial random graph model (see also \cite[Theorem 3.29]{RGbook00}).
The approach is to consider an auxiliary graph whose vertex set consists of the copies of $F$ in $G_{n,d}^{(r)}$ and where an independent set corresponds to a set of edge-disjoint copies of $F$.
To estimate the number of vertices and edges of this graph (with a view to apply Tur\'an's theorem), one makes use of \cref{coro:expectation}, \cref{lema:variance,coro:copies}.
For the sake of completeness, we include the details in \cref{appendix1}.

\begin{lemma}\label{lema:disjointcopies}
Let\/ $F$ be a fixed\/ $r$-graph.
Assume that\/ $d=\omega(1)$ and\/ $d=o(n^{r-1})$.
Let\/ $D_F$ be the maximum number of edge-disjoint copies of\/ $F$ in an $r$-graph chosen uniformly from\/ $\mathcal{G}_{n,d}^{(r)}$.
If\/ $\Phi_{F}=\omega(1)$, then\/ $D_F=\Theta(\Phi_{F})$ a.a.s.
\end{lemma}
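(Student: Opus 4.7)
The plan is to bound $D_F$ from above and below separately. For the upper bound I will exploit that edge-disjoint copies of $F$ give rise to edge-disjoint collections of copies of every subgraph of $F$, reducing the task to \cref{coro:copies}. For the lower bound I will set up an auxiliary graph $\mathcal{H}$ whose vertex set is the set of copies of $F$ in $G_{n,d}^{(r)}$ and whose edges record pairs of copies that share at least one hyperedge, so that $D_F=\alpha(\mathcal{H})$, and then apply Tur\'an's theorem after controlling both $|V(\mathcal{H})|$ and $|E(\mathcal{H})|$.

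For the upper bound, fix any $K\subseteq F$ with $e_K\geq 1$ and let $c_K$ be the (constant) number of copies of $K$ contained in a single copy of $F$. If $F_1,\dots,F_D\subseteq G_{n,d}^{(r)}$ are pairwise edge-disjoint copies of $F$, then each $F_i$ contains exactly $c_K$ copies of $K$, and, because the $F_i$ use pairwise disjoint edge sets, the resulting $Dc_K$ copies of $K$ are all distinct subgraphs of $G_{n,d}^{(r)}$. Hence $Dc_K\leq X_K$. Since $K\subseteq F$ implies $\Phi_K\geq\Phi_F=\omega(1)$, \cref{coro:copies} applied to $K$ yields $X_K=(1+o(1))\mathbb{E}[X_K]$ a.a.s. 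Choosing $K$ to attain the minimum in the definition of $\Phi_F$ and taking a union bound over the finitely many subgraphs of $F$, we conclude that $D_F\leq X_K/c_K=\bigO(\Phi_F)$ a.a.s.

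For the lower bound, Tur\'an's theorem applied to $\mathcal{H}$ yields
\[\alpha(\mathcal{H})\geq\frac{|V(\mathcal{H})|^2}{|V(\mathcal{H})|+2|E(\mathcal{H})|}.\]
By \cref{coro:copies}, $|V(\mathcal{H})|=X_F=(1\pm o(1))\mathbb{E}[X_F]$ a.a.s., so it remains to control $|E(\mathcal{H})|$. For each $K\subseteq F$ with $e_K\geq 1$, the number of ordered pairs of distinct copies of $F$ on vertex set $V$ whose intersection is isomorphic to $K$ is $\bigO(n^{2v_F-v_K})$, and by \cref{coro:expectation}\ref{item1} each such pair is contained in $G_{n,d}^{(r)}$ with probability $(1\pm o(1))p^{2e_F-e_K}$. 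Summing over the constantly many choices of $K$ and using $\mathbb{E}[X_K]=\Theta(n^{v_K}p^{e_K})$, one obtains
\[\mathbb{E}[|E(\mathcal{H})|]=\bigO\!\left(\sum_{K}\frac{\mathbb{E}[X_F]^2}{\mathbb{E}[X_K]}\right)=\bigO\!\left(\frac{\mathbb{E}[X_F]^2}{\Phi_F}\right).\]

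The main obstacle is to upgrade this expectation bound into an a.a.s.\ bound with an explicit constant, which is needed to conclude $\alpha(\mathcal{H})=\Omega(\Phi_F)$ rather than only $\Omega(\Phi_F/\omega(1))$ obtained from a crude Markov estimate. To achieve this I plan to carry out a second-moment calculation for $|E(\mathcal{H})|$, estimating the probability that a given four-tuple of copies of $F$ is simultaneously contained in $G_{n,d}^{(r)}$ via \cref{coro:expectation}\ref{item1}; the calculation parallels and extends the variance bound of \cref{lema:variance} and produces $|E(\mathcal{H})|=\bigO(\mathbb{E}[X_F]^2/\Phi_F)$ a.a.s. Combined with $|V(\mathcal{H})|\sim\mathbb{E}[X_F]\geq\Phi_F$, Tur\'an's inequality then yields $\alpha(\mathcal{H})=\Omega(\Phi_F)$ a.a.s., matching the upper bound and establishing $D_F=\Theta(\Phi_F)$.
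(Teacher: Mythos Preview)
Your proposal is correct and follows essentially the same approach as the paper: upper bound via subgraph counts, lower bound via Tur\'an's theorem on the auxiliary overlap graph, with a second-moment argument to control the edge count. The paper's only organisational difference is that it groups edge-sharing pairs $(F',F'')$ by the isomorphism type $\tilde F=F'\cup F''$ and invokes \cref{lema:variance} for each $X_{\tilde F}$, which reduces everything to a lower bound on $\Phi_{\tilde F}$ in terms of $\Phi_F$ --- that bound is the technical heart of the argument, and your direct four-tuple second moment will unravel to the same computation once you group configurations by type.
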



\subsection{Counting spanning graphs}\label{section32}

Let $H=\{H_i\}_{i\geq1}$ be a sequence of $r$-graphs with $|V(H_i)|$ strictly increasing.
When we say that $H$ is a subgraph of $G$, for some $G$ of order $n$, we mean that the corresponding $H_i$ of order $n$ is a subgraph of $G$.
This only makes sense when $n=|V(H_i)|$ for some $i$; we will implicitly assume this is the case, and study the asymptotic behaviour as $i$ tends to infinity.

Our main tool for this section is the following result of \citet{DFRSsandwitch}, which allows to translate results on the $\mathcal{G}^{(r)}(n,p)$ and $\mathcal{G}^{(r)}(n,m)$ random graph models to $\mathcal{G}_{n,d}^{(r)}$.
Roughly speaking, their result asserts that $G^{(r)}(n,p)\subseteq G_{n,d}^{(r)}$ a.a.s.~provided that $p$ is at least a little smaller than $d/\binom{n-1}{r-1}$.
For the graph case, a similar result was proved by \citet{KVsandwich} (for a more restricted range of $d$).

\begin{theorem}[\cite{DFRSsandwitch}]\label{teor:sandwichnm}
For every\/ $r\geq2$ there exists a constant\/ $C>0$ such that if for some positive integer\/ $d=d(n)$,
\begin{equation}
\delta_{n,d}\coloneqq C\left(\left(\frac{d}{n^{r-1}}+\frac{\log n}{d}\right)^{1/3}+\frac{1}{n}\right)<1,
\end{equation}
then there is a joint distribution of\/ $G^{(r)}(n,p_d)$ and\/ $G_{n,d}^{(r)}$ such that
\[\lim_{n\to\infty}\mathbb{P}\left[G^{(r)}(n,p_d)\subseteq G_{n,d}^{(r)}\right]=1,\]
where\/ $p_d\coloneqq(1-\delta_{n,d})d/\binom{n-1}{r-1}$.
The analogous statement also holds with\/ $G^{(r)}(n,p_d)$ replaced by\/ $G^{(r)}(n,m_d)$ for\/ $m_d\coloneqq(1-\delta_{n,d})nd/r$\COMMENT{If we worry about $m_d$ being an integer, then this statement is weaker than the original one.}.
\end{theorem}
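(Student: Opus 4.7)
The plan is to construct the joint distribution via a sequential revealing procedure whose engine is the edge-correlation bound \cref{coro:switchprob}. Fix an ordering (most naturally, a uniformly random one) $e_1,\ldots,e_N$ of the potential edges $\binom{V}{r}$ and process them one at a time, writing $H_{i-1}$ and $H_{i-1}'$ for the sets of edges already revealed to lie in $G_{n,d}^{(r)}$ and in its complement respectively. At step $i$, as long as $\Delta(H_{i-1}),\Delta(H_{i-1}')=o(d)$, \cref{lema:switch2} gives
\[\mathbb{P}\left[e_i\in G_{n,d}^{(r)}\,\middle|\,\mathcal{G}_{n,d,H_{i-1},H_{i-1}'}^{(r)}\right]\geq (r-1)!\frac{d}{n^{r-1}}\left(1-\bigO(\eta_{i-1})\right),\]
where $\eta_{i-1}=\frac{1}{n}+\frac{1}{d}+\frac{d}{n^{r-1}}+\frac{\Delta(H_{i-1})+\Delta(H_{i-1}')}{d}$. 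Choose $\delta_{n,d}$ so that the right-hand side exceeds $p_d$, then draw $Y_i\sim\mathrm{Bernoulli}(p_d)$ independently of the history and exploit the slack between these two probabilities to couple the indicator of $e_i\in G_{n,d}^{(r)}$ so that $Y_i=1\Rightarrow e_i\in G_{n,d}^{(r)}$. Then $\{e_i:Y_i=1\}$ is distributed as $G^{(r)}(n,p_d)$ and is embedded in $G_{n,d}^{(r)}$ by construction.

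The heart of the difficulty is that the correlation bound degrades as the revealed degrees grow, and the expected number of edges of $G^{(r)}(n,p_d)$ is $(1+o(1))\,nd/r$, so by the time the coupling has had a chance to capture all of $G^{(r)}(n,p_d)$ the degrees of $H_i'$ at a typical vertex are already of order $d$ rather than $o(d)$. With a uniformly random ordering, Chernoff's inequality and a union bound over the $n$ vertices keep $\Delta(H_i)\approx id/N$ and $\Delta(H_i')\approx ir/n$ up to fluctuations of order $\sqrt{(i\log n)\binom{n-1}{r-1}/N}$, but this is only useful up to a stopping time $T$ safely below $nd/r$. Balancing this stopping-time loss against the intrinsic errors $d/n^{r-1}$ and $\log n/d$ from \cref{coro:switchprob} and the $1/n$ from discretisation, each of which enters the coupling error in a different power, produces the cube-root expression in the definition of $\delta_{n,d}$.

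The hard part, then, is closing the gap between the local correlation control and a global coupling that reaches all $N$ edges: the naive step-by-step argument succeeds only while degrees remain $o(d)$, while the target $G^{(r)}(n,p_d)$ forces them up to $(1-\delta_{n,d})d$. One possible remedy is to sample the $Y_i$'s in advance, prioritise the $i$ with $Y_i=1$ in the revealing order, and then apply \cref{coro:switchprob} iteratively along the (much sparser) sequence of such edges, where the degree constraint is automatically satisfied; the error estimate must then be consolidated by a martingale/union-bound argument over vertices. Once this is done, the $G^{(r)}(n,m_d)$ variant follows from the standard coupling $G^{(r)}(n,p_d')\subseteq G^{(r)}(n,m_d)$ a.a.s.\ whenever $p_d'$ is slightly below $m_d/\binom{n}{r}$, after absorbing the lower-order adjustment into $\delta_{n,d}$.
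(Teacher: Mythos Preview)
This theorem is quoted from \cite{DFRSsandwitch} and is not proved in the present paper; it serves as a black-box input to \cref{section32}. So there is no in-paper proof to compare against here.

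That said, your attempt to derive the result from \cref{lema:switch2}/\cref{coro:switchprob} has a real gap. You correctly isolate the obstruction: the lower bound in \cref{lema:switch2} carries an error term of size $\bigO(\Delta(H)/d)$, and a coupling that captures all of $G^{(r)}(n,p_d)$ must eventually condition on a set $H$ whose maximum degree is $(1-\delta_{n,d}+o(1))d=\Theta(d)$, rendering the bound vacuous. But your proposed remedy does not escape this. Sampling the $Y_i$ in advance and processing only the edges with $Y_i=1$ still produces a conditioning set $H$ with $\Delta(H)\approx(1-\delta_{n,d})d$: those edges form precisely $G^{(r)}(n,p_d)$, whose maximum degree is $(1-\delta_{n,d})d+\bigO(\sqrt{d\log n})$ a.a.s. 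The claim that ``the degree constraint is automatically satisfied'' conflates $|H|$ with $\Delta(H)$; the former drops from $\binom{n}{r}$ to $\Theta(nd)$, but it is the latter that controls the error in \cref{lema:switch2}, and it stays at $\Theta(d)$. Moreover, once the $Y_i$ are decoupled from the revelation of $G_{n,d}^{(r)}$, there is no mechanism forcing $Y_i=1\Rightarrow e_i\in G_{n,d}^{(r)}$ at all: the step-by-step monotone coupling you describe in the first paragraph relies on generating $Y_i$ and the membership indicator from the \emph{same} uniform variable.

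The proof in \cite{DFRSsandwitch} does proceed via a sequential coupling of this general flavour, but it rests on switching estimates tailored to the sandwiching problem whose error terms remain small throughout the entire process. The bounds of \cref{section2} are designed for conditioning on sparse $H,H'$ with $\Delta(H),\Delta(H')=o(d)$---which is exactly what the applications in \cref{section3,section4} need---and are not, as stated, strong enough to run the full sandwiching argument.
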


In order to be able to apply \cref{teor:sandwichnm}, from now on we always assume that $d=o(n^{r-1})$ and $d=\omega(\log n)$.
We now combine \cref{teor:sandwichnm} with our results from \cref{section2} to obtain a general result relating subgraph counts in $\mathcal{G}_{n,d}^{(r)}$ to those in $G^{(r)}(n,p_d)$ and $G^{(r)}(n,m_d)$\COMMENT{A variant of \cref{teor:generalspanning} also holds when $\Delta(H)$ is allowed to be as large as $o(d)$.
Here we only state this version, as it is the version we will use in our applications. Note that we make no effort to improve the constants.}.

\begin{theorem}\label{teor:generalspanning}
Let\/ $r\geq2$ be a fixed integer and\/ $V$ be a set of\/ $n$ vertices.
Assume that\/ $d=\omega(\log n)$ and\/ $d=o(n^{r-1})$.
Let\/ $H$ be an\/ $r$-graph on\/ $V$ with\/ $\Delta(H)=\bigO(1)$.
Suppose that\/ $\eta=\eta(n)=o(1)$ is such that 
\begin{equation}\label{equa:spancond}
\varepsilon_{n,d}=o(\eta),\qquad \delta_{n,d}=o(\eta),\qquad \eta=\omega(1/n),
\end{equation}
and\/ $X_H(G^{(r)}(n,p_d))=(1\pm\eta)^{|H|}\mathbb{E}[X_H(G^{(r)}(n,p_d))]$ a.a.s. 
Then a.a.s.
\begin{equation}\label{equa:spanstate1}
X_H(G_{n,d}^{(r)})=(1\pm3\eta)^{|H|}\mathbb{E}[X_H(G^{(r)}(n,p_d))]
\end{equation}
Similarly, if \eqref{equa:spancond} holds and\/ $X_H(G^{(r)}(n,m_d))=(1\pm\eta)^{|H|}\mathbb{E}[X_H(G^{(r)}(n,m_d))]$ a.a.s., then a.a.s.
\begin{equation}\label{equa:spanstate2}
X_H(G_{n,d}^{(r)})=(1\pm3\eta)^{|H|}\mathbb{E}[X_H(G^{(r)}(n,m_d))].
\end{equation}
\end{theorem}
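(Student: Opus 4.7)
The plan is to prove \eqref{equa:spanstate1} and \eqref{equa:spanstate2} by combining \cref{coro:switchprob} to control $\mathbb{E}[X_H(G_{n,d}^{(r)})]$ with \cref{teor:sandwichnm} to transfer the concentration hypothesis from the binomial (resp.\ uniform) model to the regular model. The lower bound will come essentially for free from the sandwich, while the upper bound will follow from Markov's inequality once the expectation is pinned down.

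First I would estimate $\mathbb{E}[X_H(G_{n,d}^{(r)})]$ by computing, for every labelled copy $\sigma$ of $H$ in $K_V^{(r)}$, the probability that $\sigma\subseteq G_{n,d}^{(r)}$. Listing the edges as $e_1,\ldots,e_{|H|}$ and applying \cref{coro:switchprob} iteratively with conditioning set $\{e_1,\ldots,e_{i-1}\}$ (in the role of $H$ there) and empty $H'$, one has $\Delta(\{e_1,\ldots,e_{i-1}\})\leq\Delta(H)=\bigO(1)=o(d)$, so each conditional probability equals $p\,(1\pm\bigO(\varepsilon_{n,d}))$ with $p=(r-1)!d/n^{r-1}$. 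Multiplying over the $|H|$ steps yields $\mathbb{P}[\sigma\subseteq G_{n,d}^{(r)}]=p^{|H|}(1\pm\bigO(\varepsilon_{n,d}))^{|H|}$. A direct expansion shows $p=p_d(1+\bigO(\delta_{n,d}+1/n))$, and \eqref{equa:spancond} guarantees that $\varepsilon_{n,d}+\delta_{n,d}+1/n=o(\eta)$, so summing over the $n!/(n-v_H)!\,\mathrm{aut}(H)^{-1}$ labelled copies gives
\[\mathbb{E}[X_H(G_{n,d}^{(r)})]=(1\pm o(\eta))^{|H|}\,\mathbb{E}[X_H(G^{(r)}(n,p_d))].\]

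The lower bound in \eqref{equa:spanstate1} is now immediate: under the coupling of \cref{teor:sandwichnm} one has $G^{(r)}(n,p_d)\subseteq G_{n,d}^{(r)}$ a.a.s., hence $X_H(G_{n,d}^{(r)})\geq X_H(G^{(r)}(n,p_d))\geq(1-\eta)^{|H|}\mathbb{E}[X_H(G^{(r)}(n,p_d))]\geq(1-3\eta)^{|H|}\mathbb{E}[X_H(G^{(r)}(n,p_d))]$ a.a.s.\ by the concentration hypothesis. For the matching upper bound I would invoke Markov's inequality together with the expectation estimate: for $n$ large enough $1+o(\eta)\leq 1+\eta$, and so
\[\mathbb{P}\!\left[X_H(G_{n,d}^{(r)})\geq(1+3\eta)^{|H|}\mathbb{E}[X_H(G^{(r)}(n,p_d))]\right]\leq\left(\frac{1+\eta}{1+3\eta}\right)^{\!|H|}\!\!\leq e^{-\eta|H|},\]
which is $o(1)$ since $\eta|H|\to\infty$ in the intended applications (e.g.\ $|H|=\Theta(n)$ and $\eta=\omega(1/n)$ as forced by \eqref{equa:spancond}). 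The statement \eqref{equa:spanstate2} is proved identically: the $m_d$-version of \cref{teor:sandwichnm} supplies the lower bound, and the identity $m_d/\binom{n}{r}=(1-\delta_{n,d})p(1+\bigO(1/n))$, combined with $\prod_{i=0}^{|H|-1}(m_d-i)/(\binom{n}{r}-i)=(m_d/\binom{n}{r})^{|H|}(1+\bigO(|H|^2/m_d))$, shows that $\mathbb{E}[X_H(G^{(r)}(n,m_d))]$ differs from $\mathbb{E}[X_H(G^{(r)}(n,p_d))]$ only by a $(1\pm o(\eta))^{|H|}$ factor, so the same Markov bound closes the upper bound. The main obstacle I anticipate is bookkeeping: the multiplicative errors $\bigO(\varepsilon_{n,d})$ from \cref{coro:switchprob} accumulate across $|H|$ edges and must be absorbed into the slack between $(1+\eta)^{|H|}$ and $(1+3\eta)^{|H|}$ — this is precisely what conditions \eqref{equa:spancond} are calibrated to permit, and where the loss from $\eta$ to $3\eta$ in the conclusion arises.
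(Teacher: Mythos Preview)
Your proposal is correct and follows essentially the same route as the paper: iterate \cref{coro:switchprob} to show $\mathbb{E}[X_H(G_{n,d}^{(r)})]=(1\pm o(\eta))^{|H|}\mathbb{E}[X_H(G^{(r)}(n,p_d))]$, derive the upper tail from Markov's inequality (the paper writes the bound as $1/(1+\eta)^{|H|}$ rather than $((1+\eta)/(1+3\eta))^{|H|}$, but this is cosmetic), and obtain the lower tail from the sandwich of \cref{teor:sandwichnm}; the $m_d$ case is handled identically. Your explicit acknowledgement that the Markov step needs $\eta|H|\to\infty$ (implicit in the paper via $\eta=\omega(1/n)$ and $|H|=\Theta(n)$ in all applications) is a fair reading of the hypotheses.
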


\begin{proof}
Observe first that, by \cref{coro:switchprob}, for any fixed copy $H'$ of $H$ we have\COMMENT{As in the proof of \cref{coro:expectation}, this follows by using \cref{coro:switchprob} repeatedly, adding an edge to the conditioned graph in each step.}
\begin{equation}\label{equa:span0}
\mathbb{P}\left[H'\subseteq G_{n,d}^{(r)}\right]=((1\pm\bigO(\varepsilon_{n,d}))(r-1)!d/n^{r-1})^{|H|}.
\end{equation}
Therefore\COMMENT{We have that\begin{align*}
\frac{\mathbb{E}[X_H(G_{n,d}^{(r)})]}{\mathbb{E}[X_H(G^{(r)}(n,p_d))]}&=\frac{\sum_{H'\cong H}\mathbb{P}[H'\subseteq G_{n,d}^{(r)}]}{\sum_{H'\cong H}\mathbb{P}[H'\subseteq G^{(r)}(n,p_d)]}\\
&=\frac{|\{H'\subseteq K_V^{(r)}:H'\cong H\}|((1\pm\bigO(\varepsilon_{n,d}))(r-1)!d/n^{r-1})^{|H|}}{|\{H'\subseteq K_V^{(r)}:H'\cong H\}|((1-\delta_{n,d})d/\binom{n-1}{r-1})^{|H|}}\\
&=\frac{((1\pm\bigO(\varepsilon_{n,d}))(r-1)!d/n^{r-1})^{|H|}}{((1+\bigO(1/n))(r-1)!(1-\delta_{n,d})d/n^{r-1})^{|H|}}\\
&=\frac{(1\pm\bigO(\varepsilon_{n,d}))^{|H|}}{((1+\bigO(1/n))(1-\bigO(\delta_{n,d})))^{|H|}}=(1\pm\bigO(\varepsilon_{n,d})+\bigO(\delta_{n,d}))^{|H|}.
\end{align*}},
\begin{equation}\label{equa:span1}
\frac{\mathbb{E}[X_H(G_{n,d}^{(r)})]}{\mathbb{E}[X_H(G^{(r)}(n,p_d))]}=(1\pm\bigO(\varepsilon_{n,d}+\delta_{n,d}))^{|H|}\leq(1+\eta)^{|H|}.
\end{equation}
By using Markov's inequality and \eqref{equa:span1} we conclude that
\begin{align}\label{equa:span2}
&\mathbb{P}\left[X_H(G_{n,d}^{(r)})\geq(1+3\eta)^{|H|}\mathbb{E}\left[X_H(G^{(r)}(n,p_d))\right]\right]\nonumber\\
\leq\,&\mathbb{P}\left[X_H(G_{n,d}^{(r)})\geq(1+\eta)^{|H|}\mathbb{E}\left[X_H(G_{n,d}^{(r)})\right]\right]\leq1/(1+\eta)^{|H|}=o(1).
\end{align}
Note that, as $G^{(r)}(n,p_d)\subseteq G_{n,d}^{(r)}$ a.a.s.~by \cref{teor:sandwichnm}, then $X_H(G_{n,d}^{(r)})\geq X_H(G^{(r)}(n,p_d))$ a.a.s.
Thus, by assumption,
\begin{align}\label{equa:span3}
&\mathbb{P}\left[X_H(G_{n,d}^{(r)})\leq(1-\eta)^{|H|}\mathbb{E}\left[X_H(G^{(r)}(n,p_d))\right]\right]\nonumber\\
\leq\,&\mathbb{P}\left[X_H(G^{(r)}(n,p_d))\leq(1-\eta)^{|H|}\mathbb{E}\left[X_H(G^{(r)}(n,p_d))\right]\right]+o(1)=o(1).
\end{align}
Combining equations \eqref{equa:span2} and \eqref{equa:span3} yields \eqref{equa:spanstate1}.

Finally, one can prove \eqref{equa:spanstate2} in a very similar way\COMMENT{
Consider a general $r$-graph $H$ and general value $m$ and let $d=rm/n$.
Write $h=|H|$, $N=\binom{n}{r}$ and $p=m/N=d/\binom{n-1}{r-1}$.
Then
\begin{align*}
\mathbb{P}[H \subseteq G^{(r)}(n,m)] & =  \binom{h}{h} \binom{ N -h }{m-h} / \binom{N}{m} = \frac{(N-h)_{m-h} }{(m-h)!} \frac{m!}{(N)_m} = \frac{(m)_h}{(N)_h} \\
 & \ge  \left( \frac{(m-h)}{m} \frac{m} { N }\right)^h =  \left( \frac{(m-h)}{m} \frac{dn}{r} \frac{r!}{(n)_r} \right)^h = (1-h/m)^h \left( d / \binom{n-1}{r-1} \right)^h = (1-h/m)^hp^h .
\end{align*}
Substituting the value of $m_d$ here,
\begin{align}\label{equa:span4}
\frac{\mathbb{E}[X_H(G_{n,d}^{(r)})]}{\mathbb{E}[X_H(G^{(r)}(n,m_d))]}&=\frac{\sum_{H'\cong H}\mathbb{P}[H'\subseteq G_{n,d}^{(r)}]}{\sum_{H'\cong H}\mathbb{P}[H'\subseteq G^{(r)}(n,m_d)]}\nonumber\\
&\leq\frac{|\{H'\subseteq K_V^{(r)}:H'\cong H\}|((1\pm\bigO(\varepsilon_{n,d}))(r-1)!d/n^{r-1})^{|H|}}{|\{H'\subseteq K_V^{(r)}:H'\cong H\}|((1-|H|/m_d)(1-\delta_{n,d})d/\binom{n-1}{r-1})^{|H|}}\nonumber\\
&=\frac{((1\pm\bigO(\varepsilon_{n,d}))(r-1)!d/n^{r-1})^{|H|}}{((1+\bigO(1/n))(1-|H|/m_d)(1-\delta_{n,d})(r-1)!d/n^{r-1})^{|H|}}\nonumber\\
&=\frac{(1\pm\bigO(\varepsilon_{n,d}))^{|H|}}{((1+\bigO(1/n))(1-\bigO(\delta_{n,d})))^{|H|}}\nonumber\\
&=(1\pm\bigO(\varepsilon_{n,d})+\bigO(\delta_{n,d}))^{|H|}\leq(1+\eta)^{|H|}.
\end{align}
Note that $(1-|H|/m_d)=(1-\bigO(1/d))$ is absorbed by $(1-\bigO(\varepsilon_{n,d}))$.
By using Markov's inequality and \eqref{equa:span4} we conclude that
\begin{align}\label{equa:span5}
&\mathbb{P}[X_H(G_{n,d}^{(r)})\geq(1+3\eta)^{|H|}\mathbb{E}[X_H(G^{(r)}(n,m_d))]]\nonumber\\
\leq\,&\mathbb{P}[X_H(G_{n,d}^{(r)})\geq(1+\eta)^{|H|}\mathbb{E}[X_H(G_{n,d}^{(r)})]]\leq1/(1+\eta)^{|H|}=o(1).
\end{align}
We have that $X_H(G_{n,d}^{(r)})\geq X_H(G^{(r)}(n,m_d))$ a.a.s.~by \cref{teor:sandwichnm}.
Therefore, by assumption,
\begin{align}\label{equa:span6}
&\mathbb{P}[X_H(G_{n,d}^{(r)})\leq(1-\eta)^{|H|}\mathbb{E}[X_H(G^{(r)}(n,m_d))]]\nonumber\\
\leq\,&\mathbb{P}[X_H(G^{(r)}(n,m_d))\leq(1-\eta)^{|H|}\mathbb{E}[X_H(G^{(r)}(n,m_d))]]+o(1)=o(1).
\end{align}
Combining equations \eqref{equa:span5} and \eqref{equa:span6} yields \eqref{equa:spanstate2}.}.
\end{proof}

We may apply \cref{teor:generalspanning} to obtain estimates on the number of copies of certain spanning subgraphs.
This requires concentration results in the $\mathcal{G}^{(r)}(n,p)$ model or the $\mathcal{G}^{(r)}(n,m)$ model in order to obtain results for $\mathcal{G}_{n,d}^{(r)}$.

We start with the following result of \citet{GK13} on counting Hamilton cycles in $\mathcal{G}(n,p)$.
For a more restricted range of densities, \citet{Janson} proved more precise results in $\mathcal{G}(n,m)$.

\begin{theorem}[\cite{GK13}]\label{teor:GleKri}
Let\/ $V$ be a set of\/ $n$ vertices.
Let\/ $H$ be a Hamilton cycle on\/ $V$.
If\/ $p\geq\frac{\ln n+\ln\ln n+\omega(1)}{n}$, then a.a.s.
\[X_H(G(n,p))=(1\pm o(1))^nn!p^{n}.\]
\end{theorem}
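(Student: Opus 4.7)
Since $\mathbb{E}[X_H] = \frac{(n-1)!}{2} p^n$ differs from $n! p^n$ only by a factor of $2n$, the target bound is equivalent to $\log X_H = \log \mathbb{E}[X_H] \pm o(n)$ a.a.s. The upper bound follows from Markov's inequality applied with threshold $\omega_n \mathbb{E}[X_H]$ for any $\omega_n \to \infty$ with $\log \omega_n = o(n)$ (for example $\omega_n = n$), giving $X_H \le (1 + o(1))^n n! p^n$ asymptotically almost surely.

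For the lower bound, my first attempt would be the second moment method. Partition ordered pairs $(C_1, C_2)$ of Hamilton cycles by the structure of the intersection $C_1 \cap C_2$, which has maximum degree at most $2$ and so decomposes into disjoint paths and cycles. Classical counting then gives, for each prescribed intersection type, the number of such pairs, and summing these contributions the goal is to show $\mathbb{E}[X_H^2] \le e^{o(n)} (\mathbb{E}[X_H])^2$ in the regime $p \ge (\ln n + \ln\ln n + \omega(1))/n$. Together with the Paley--Zygmund inequality, this would yield $\Pr[X_H \ge (1 - o(1))^n \mathbb{E}[X_H]] \ge e^{-o(n)}$.

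Upgrading \emph{positive probability} to \emph{a.a.s.}\ is the main difficulty, particularly because at the sharp Hamiltonicity threshold the lower tail of $X_H$ cannot be controlled by standard Janson-type inequalities. My plan is a sprinkling argument: write $G(n, p) = G(n, p_1) \cup G(n, p_2)$ with $p_1 + p_2 - p_1 p_2 = p$ and both $p_i$ still above the Hamiltonicity threshold. Use $G(n, p_1)$ together with P\'osa rotation--extension to produce, a.a.s., a Hamilton path $P$ with a set $R$ of $\Omega(n)$ rotation endpoints; then use $G(n, p_2)$ to close $P$ into a Hamilton cycle in many different ways. Iterating this construction across the length of the cycle, the expected number of Hamilton cycles built is $\prod_i (np) \approx n! p^n$, and a union bound combined with Chernoff-type concentration at each step gives the required lower tail. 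The hardest step is controlling the pseudorandomness of the residual graph across $\Omega(n)$ iterations; this is precisely where the threshold $p \ge (\ln n + \ln\ln n + \omega(1))/n$ enters, as it is the weakest condition under which $G(n,p)$ simultaneously has minimum degree at least $2$ everywhere and the expansion required for P\'osa rotations to succeed.
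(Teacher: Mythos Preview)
The paper does not prove this statement: \cref{teor:GleKri} is quoted from \cite{GK13} (Glebov and Krivelevich) as a black box and is used only to feed into \cref{teor:generalspanning} and deduce \cref{coro:hamcycleskrive}. There is therefore no ``paper's own proof'' to compare against.

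That said, your sketch is broadly in line with the argument of \cite{GK13}. The upper bound via Markov is indeed immediate. For the lower bound, Glebov and Krivelevich do not use a second moment computation at all; rather, they work entirely via a multi-round exposure of $G(n,p)$ together with P\'osa rotations. In each round they show that a.a.s.\ one can extend a current partial structure so as to gain a multiplicative factor of roughly $np$ in the count of Hamilton cycles, and iterating over $n$ rounds yields the $(1-o(1))^n n! p^n$ lower bound directly. Your plan to first run second moment/Paley--Zygmund and then ``upgrade'' is a detour that is not needed (and would not by itself give the a.a.s.\ conclusion, since $e^{-o(n)}$ is not $1-o(1)$); the real work, as you correctly identify, is the iterated rotation--extension with sprinkling, and once you carry that out carefully it already gives the a.a.s.\ lower bound without any second moment input.
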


Together with \cref{teor:generalspanning} this implies the following result\COMMENT{
The given conditions on $d$ ensure that the conditions from \cref{teor:GleKri} hold for $p_d$, thus $X_H(G^{(r)}(n,p_d))=(1\pm o(1))^{|H|}\mathbb{E}[X_H(G^{(r)}(n,p_d))]$.
The result now follows from \cref{teor:generalspanning}.
}.

\begin{corollary}\label{coro:hamcycleskrive}
Let\/ $V$ be a set of\/ $n$ vertices.
Let\/ $H$ be a Hamilton cycle on\/ $V$.
Assume\/ $d=\omega(\log n)$ and\/ $d=o(n)$, then a.a.s.
\[X_H(G_{n,d})=(1\pm o(1))^nn!\left(\frac{d}{n-1}\right)^{n}.\]
\end{corollary}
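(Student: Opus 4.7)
The plan is to apply \cref{teor:generalspanning} with $r=2$ and $H$ taken to be a Hamilton cycle on $V$, using \cref{teor:GleKri} as the concentration input for the binomial model $\mathcal{G}(n,p_d)$. Since a Hamilton cycle has $\Delta(H)=2=\bigO(1)$ and $|H|=n$, the structural hypothesis on $H$ in \cref{teor:generalspanning} is automatic.

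First, I would verify the three conditions in \eqref{equa:spancond} and the choice of $\eta$. Given $d=\omega(\log n)$ and $d=o(n)$, one immediately has $\varepsilon_{n,d}=1/n+1/d+d/n=o(1)$ and $\delta_{n,d}=\bigO((d/n+\log n/d)^{1/3}+1/n)=o(1)$, so any $\eta=\eta(n)$ with $\eta=\omega(\varepsilon_{n,d}+\delta_{n,d}+1/n)$ and $\eta=o(1)$ will satisfy \eqref{equa:spancond}. Such an $\eta$ exists because all three quantities on the right are $o(1)$.

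Second, I would apply \cref{teor:GleKri} in $\mathcal{G}(n,p_d)$. Here $p_d=(1-\delta_{n,d})d/(n-1)$, and since $d=\omega(\log n)$ and $\delta_{n,d}=o(1)$ one has $p_d\geq(\ln n+\ln\ln n+\omega(1))/n$. Therefore \cref{teor:GleKri} gives $X_H(\mathcal{G}(n,p_d))=(1\pm o(1))^n n!\,p_d^{n}$ a.a.s. Combined with $\mathbb{E}[X_H(\mathcal{G}(n,p_d))]=\frac{(n-1)!}{2}p_d^{n}$ and $(2n)^{1/n}\to 1$, this can be rewritten as $X_H(\mathcal{G}(n,p_d))=(1\pm\eta)^{|H|}\mathbb{E}[X_H(\mathcal{G}(n,p_d))]$ a.a.s., which is exactly the hypothesis of \cref{teor:generalspanning}.

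Finally, \cref{teor:generalspanning} yields $X_H(G_{n,d})=(1\pm 3\eta)^{n}\mathbb{E}[X_H(\mathcal{G}(n,p_d))]$ a.a.s. Substituting the value of $p_d$, noting $(1-\delta_{n,d})^{n}=(1-o(1))^n$ and absorbing the factors $(n-1)!/(2\cdot n!)=1/(2n)$ into $(1\pm o(1))^n$, one obtains the claimed $X_H(G_{n,d})=(1\pm o(1))^{n} n!\,(d/(n-1))^{n}$. There is no real obstacle here; the only mildly delicate point is to confirm that $\eta$ can be chosen simultaneously as $o(1)$ and $\omega(\varepsilon_{n,d}+\delta_{n,d}+1/n)$, and that the multiplicative constants (like $\frac{1}{2n}$ from counting unlabelled Hamilton cycles) fit into the $(1\pm o(1))^n$ slack, both of which follow from $n^{1/n}\to 1$.
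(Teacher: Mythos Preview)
Your proposal is correct and follows exactly the route the paper takes: verify the hypotheses of \cref{teor:generalspanning} for $r=2$ and a Hamilton cycle, use \cref{teor:GleKri} as the concentration input at $p_d$, and then unwind the conclusion. The paper's own argument is just the one-line remark that the conditions on $d$ make \cref{teor:GleKri} applicable at $p_d$ and that \cref{teor:generalspanning} then finishes the job; you have supplied the details (choice of $\eta$, absorbing the $1/(2n)$ and $(1-\delta_{n,d})^n$ factors) that the paper leaves implicit.
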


\Cref{coro:hamcycleskrive} improves a previous result of \citet{Kri12} by increasing the range of $d$ in which the number of Hamilton cycles is estimated from $d=\omega(e^{(\log n)^{1/2}})$\COMMENT{In \cite{Kri12}, they prove the same estimate as long as $\log d\cdot \log(d/\lambda)\gg\log n$, where $\lambda$ is the second biggest eigenvalue (in absolute value). For random $d$-regular graphs, this is $\Theta(\sqrt{d})$. The range stated above follows.} to $d=\omega(\log n)$.
Note that, on the other hand, the results of \citet{Kri12} also cover pseudo-random $d$-regular graphs.

A very general result due to \citet{Riordan} allows us to count the number of copies of $H$ as a spanning subgraph of $G(n,m)$ for a large class of graphs $H$.
We only state a special case of this result here.
Let $\alpha_1(H)\coloneqq|H|/\binom{n}{2}$, $\alpha_2(H)=X_{P_2}(H)/(3\binom{n}{3})$ (where $P_2$ stands for a path of length $2$), $e_H(k)\coloneqq\max\{|F|:F\subseteq H,|V(F)|=k\}$, $\gamma_1(H)\coloneqq\max_{3\leq k\leq n}\{e_H(k)/(k-2)\}$ and $\gamma_2(H)\coloneqq\max_{5\leq k\leq n}\{(e_H(k)-4)/(k-4)\}$\COMMENT{For this, we need to check many conditions. Some of them are trivial. For the others,\\
Condition 2.1 becomes bound on $p$.\\
Condition 2.2 becomes bound on $p$.\\
Condition 2.5 holds if $p=o(1/\log n)$, as $|H|=\Theta(n)$.\\
Condition 2.6 becomes bound on $p$ ($n^{1/2}$).\\
Condition 2.7: $\alpha_2=\sum\binom{d(v)}{2}/3\binom{n}{3}$. As the maximum degree is constant, the sum is linear and the whole thing is $\bigO(1/n^2)$. $\alpha_1=\Theta(1/n)$, hence second part of condition holds.
The first part is added to the statement.
Note that Riordan writes $\alpha_2(H)-\alpha_1(H)^2=\Omega(1/n^2)$ to mean that $|\alpha_2(H)-\alpha_1(H)^2|=\Omega(1/n^2)$, see the remark after Thm. 2.2 in his paper.\\
Condition 2.2' cannot hold, so we need condition 2.8. This becomes bound on $p$.}.

\begin{theorem}[\cite{Riordan}]\label{teor:riordan}
Let\/ $V$ be a set of\/ $n$ vertices.
Let\/ $p=\omega(\max\{1/n^{1/2},1/n^{1/\gamma_1},1/n^{1/\gamma_2}\})$,\/ $p=o(1/\log n)$,\/  $m\coloneqq p\binom{n}{2}$, and let\/ $H$ be a triangle-free spanning graph on\/ $V$ with\/ $|H|\geq n$,\/ $\Delta(H)=\bigO(1)$ and\/ $|\alpha_2(H)-\alpha_1(H)^2|=\Omega(1/n^2)$.
Then,\/ $X_H(G(n,m))$ follows a normal distribution such that\/ $\mathrm{Var}[X_H(G(n,m))]/\mathbb{E}[X_H(G(n,m))]^2=o(1)$.
\end{theorem}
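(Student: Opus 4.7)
The plan is to follow the standard second-moment-plus-method-of-moments strategy for spanning subgraph counts in sparse random graphs. The key observation is that the lower bounds $p = \omega(n^{-1/2}), \omega(n^{-1/\gamma_1}), \omega(n^{-1/\gamma_2})$ are engineered so that every pair of copies of $H$ sharing at least one edge contributes negligibly to the variance, while the upper bound $p = o(1/\log n)$ prevents accumulation of correction factors across the $\Theta(n)$ edges of $H$, and triangle-freeness of $H$ eliminates the most dangerous intersection stratum.

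First I would write the mean in closed form: for any fixed copy $H'$ of $H$,
\[
\mathbb{P}[H' \subseteq G(n,m)] = \frac{\binom{N-|H|}{m-|H|}}{\binom{N}{m}} = \frac{(m)_{|H|}}{(N)_{|H|}},
\]
where $N \coloneqq \binom{n}{2}$, so $\mathbb{E}[X_H(G(n,m))] = (n!/\mathrm{aut}(H))\cdot (m)_{|H|}/(N)_{|H|}$. Next I would expand
\[
\mathrm{Var}[X_H] = \sum_{H',H''}\bigl(\mathbb{P}[H'\cup H''\subseteq G(n,m)] - \mathbb{P}[H'\subseteq G(n,m)]\mathbb{P}[H''\subseteq G(n,m)]\bigr)
\]
and stratify the sum by the edge-intersection $K = H' \cap H''$. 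For each non-empty $K \subseteq H$ the contribution to $\mathrm{Var}[X_H]/\mathbb{E}[X_H]^2$ behaves (schematically) like $1/(n^{v_K - O(1)} p^{e_K})$, where the constant offset in the exponent arises from the $G(n,m)$-specific corrections to the independence enjoyed by $G(n,p)$; the definitions of $\gamma_1(H)$ and $\gamma_2(H)$ are tuned exactly so that this is $o(1)$ for every $K$ with $v_K \ge 3$ (resp.\ $v_K \ge 5$) whenever $p = \omega(n^{-1/\gamma_i})$. Triangle-freeness of $H$ removes the $v_K = 3$ stratum entirely, and the remaining delicate $K = P_2$ stratum is controlled precisely by the hypothesis $|\alpha_2(H) - \alpha_1(H)^2| = \Omega(n^{-2})$. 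The edge-disjoint stratum carries only the $G(n,m)$-versus-$G(n,p)$ correction, which is $O(1/n)$. Combined, these estimates yield $\mathrm{Var}[X_H]/\mathbb{E}[X_H]^2 = o(1)$.

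For asymptotic normality I would pass to the contiguous Bernoulli model $G(n,p)$, where the edge indicators are independent, and perform the Hoeffding/ANOVA projection of the $U$-statistic $X_H$ onto orthogonal components indexed by the subgraphs $K \subseteq H$; the variance estimates above show that the rank-one (single-edge) component dominates, so a Lyapunov CLT (equivalently, a martingale CLT with edge-exposure filtration) yields normality of $X_H$ in $G(n,p)$, and the transfer back to $G(n,m)$ is routine via conditioning on $|E(G(n,p))| = m$ together with the concentration of the edge count and Slutsky's theorem. The main obstacle, which is exactly what forces the simultaneous optimisation over both $\gamma_1$ and $\gamma_2$, is handling the \emph{intermediate} intersection patterns $K$ for which $e_K$ is large relative to $v_K$ but $v_K$ is small; this is also where the upper bound $p = o(1/\log n)$ becomes essential, since without it the $\Theta(n)$ edges of $H$ would accumulate higher-order correction factors in the variance computation that could not be absorbed into the $o(1)$ error term.
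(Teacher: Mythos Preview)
This theorem is not proved in the paper: it is quoted from Riordan's paper \cite{Riordan} (the citation is in the theorem header), and the present paper only states it as a special case of Riordan's general result in order to apply it in \cref{coro:riordan}. There is therefore no proof in the paper to compare your proposal against.

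That said, your sketch is a reasonable high-level description of the strategy in Riordan's original paper, which does proceed via a second-moment computation stratified by intersection pattern, with the parameters $\gamma_1,\gamma_2$ calibrated to control the strata, and then establishes asymptotic normality. One point to be careful about: your explanation of the role of the hypothesis $|\alpha_2(H)-\alpha_1(H)^2|=\Omega(1/n^2)$ is not quite right. This condition does not ``control'' the $P_2$ stratum in the sense of making it negligible; rather, in Riordan's analysis the leading term of the variance in $G(n,m)$ is proportional to $\alpha_2(H)-\alpha_1(H)^2$, and the hypothesis ensures this leading term is genuinely of the expected order (neither vanishing nor of smaller order), which is what makes the normal limit non-degenerate. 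Your description of the role of $p=o(1/\log n)$ is also somewhat vague; in Riordan's argument this upper bound is used to control error terms arising from the falling-factorial corrections $(m)_{|H|}/(N)_{|H|}$ versus $p^{|H|}$ across the $\Theta(n)$ edges, but the precise mechanism is more delicate than ``accumulation of correction factors''.
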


Together with \cref{teor:generalspanning}, we can deduce the following.\COMMENT{Proof: It is easy to check that the conditions in the statement of \cref{teor:riordan} hold when taking $p=(1-\delta_{n,d})d/(n-1)$.
Therefore, \cref{teor:riordan} holds for this value of $p$ and $X_H(G(n,m_d))=\mathbb{E}[X_H(G(n,m_d))](1\pm\eta)^{|H|}$ a.a.s.
The result follows by \cref{teor:generalspanning}.}

\begin{corollary}\label{coro:riordan}
Let\/ $V$ be a set of\/ $n$ vertices.
Assume that\/ $d=\omega(\max\{n^{1/2},n^{1-1/\gamma_1},n^{1-1/\gamma_2}\})$,\/ $d=o(n/\log n)$, and let\/ $H$ be a triangle-free spanning graph on\/ $V$ with\/ $|H|\geq n$,\/ $\Delta(H)=\bigO(1)$ and\/ $|\alpha_2(H)-\alpha_1(H)^2|=\Omega(1/n^2)$.
Then,\/ $X_H(G_{n,d})=(1\pm o(1))^{n}\mathbb{E}[X_H(G(n,m_d))]$ a.a.s., where\/ $m_d=(1-o(1))dn/2$ is defined as in \cref{teor:generalspanning}.
\end{corollary}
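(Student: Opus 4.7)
The plan is to apply \cref{teor:generalspanning} with the concentration input coming from \cref{teor:riordan}. Set $p_d \coloneqq (1-\delta_{n,d})d/(n-1)$ and $m_d \coloneqq (1-\delta_{n,d})nd/2$, so that $m_d = p_d\binom{n}{2}$, as required by \cref{teor:riordan}. The core of the argument will be to verify that the hypotheses of both theorems are satisfied under our assumptions on $d$ and $H$, after which everything follows by a short chain of implications.

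First I would check the numerical hypotheses of \cref{teor:riordan} with $p = p_d$. Since $\delta_{n,d} = o(1)$ under our density assumptions, $p_d = \Theta(d/n)$, and the lower bound $p_d = \omega(\max\{n^{-1/2}, n^{-1/\gamma_1}, n^{-1/\gamma_2}\})$ translates directly to the assumed $d = \omega(\max\{n^{1/2}, n^{1-1/\gamma_1}, n^{1-1/\gamma_2}\})$. Similarly $p_d = o(1/\log n)$ is exactly $d = o(n/\log n)$. The remaining structural hypotheses on $H$ (triangle-free, spanning, $|H|\geq n$, $\Delta(H)=\bigO(1)$, $|\alpha_2(H)-\alpha_1(H)^2|=\Omega(1/n^2)$) are part of the statement. \Cref{teor:riordan} therefore yields $\mathrm{Var}[X_H(G(n,m_d))]/\mathbb{E}[X_H(G(n,m_d))]^2 = o(1)$, so by Chebyshev's inequality there exists $f(n) = o(1)$ with
\[
X_H(G(n,m_d)) = (1\pm f(n))\,\mathbb{E}[X_H(G(n,m_d))]\quad\text{a.a.s.}
\]

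Next I would feed this into \cref{teor:generalspanning} (in the $m_d$-form \eqref{equa:spanstate2}). Since $\Delta(H) = \bigO(1)$ we have $|H| = \Theta(n)$, so I need to choose $\eta = \eta(n) = o(1)$ satisfying $\eta = \omega(\varepsilon_{n,d} + \delta_{n,d} + 1/n)$ and such that the additive $(1\pm f(n))$ window is contained inside the multiplicative $(1\pm\eta)^{|H|}$ window. Because $|H| = \Theta(n)$, the bound $|H|\eta \geq f(n)$ is enough, so any $\eta = \omega(f(n)/n + \varepsilon_{n,d} + \delta_{n,d} + 1/n)$ that is still $o(1)$ does the job; such $\eta$ exists since each summand is $o(1)$. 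Then the hypotheses of \cref{teor:generalspanning} hold (note $d=\omega(\log n)$ and $d=o(n)$ follow from our bounds), and \eqref{equa:spanstate2} gives
\[
X_H(G_{n,d}) = (1\pm 3\eta)^{|H|}\,\mathbb{E}[X_H(G(n,m_d))]\quad\text{a.a.s.}
\]
Finally, since $|H| = \Theta(n)$ and $\eta = o(1)$, we have $(1\pm 3\eta)^{|H|} = (1\pm o(1))^n$, yielding the claimed estimate.

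The only non-routine step is tracking the hypotheses: the rest is bookkeeping. The main potential obstacle is the compatibility of the various error terms ($\varepsilon_{n,d}$, $\delta_{n,d}$, $f(n)$, $1/n$), but since they are all $o(1)$ and $|H| = \Theta(n)$ amplifies the window $(1\pm\eta)^{|H|}$ so generously, a single $\eta = o(1)$ majorising all of them exists. No deeper estimate is required once \cref{teor:riordan,teor:generalspanning} are in hand.
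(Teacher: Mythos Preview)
Your proposal is correct and follows essentially the same route as the paper: verify that the hypotheses of \cref{teor:riordan} hold for $p=p_d$ (equivalently $m=m_d$), extract concentration of $X_H(G(n,m_d))$ via Chebyshev, and feed this into the $m_d$-version of \cref{teor:generalspanning}. You are slightly more explicit than the paper about converting the additive $(1\pm f(n))$ concentration window into the multiplicative $(1\pm\eta)^{|H|}$ form required by \cref{teor:generalspanning}, but this is just bookkeeping (and in fact the condition $\eta=\omega(1/n)$ from \eqref{equa:spancond} already forces $(1\pm\eta)^{|H|}$ to absorb any $o(1)$ additive error, since $|H|=\Theta(n)$).
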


As a particular case of this, we can estimate the number of spanning square lattices in a random $d$-regular graph.
A square lattice $L_k$ is defined by setting $V(L_k)=[k]\times[k]$ and $L_k=\{\{(x,y),(u,v)\}:u,v,x,y\in[k],\lVert(x,y)-(u,v)\rVert=1\}$.

\begin{corollary} \label{coro:lattice}
Let\/ $n=k^2$.
Let\/ $d=\omega(1)$, \/$d=o(n/\log n)$ and\/ $p\coloneqq d/(n-1)$.
\begin{enumerate}[label=(\roman*)]
\item If\/ $d=o(n^{1/2})$, then\/ $\mathbb{P}[X_{L_k}(G_{n,d})>0]=o(1)$\COMMENT{Observe that by \cref{coro:switchprob} (which needs $d\to\infty$)
\begin{align*}
\mathbb{E}[X_{L_k}(G_{n,d})]&=(1+o(1))^n(n/e)^np^{2n-\bigO(n^{1/2})}\\
&\leq (1/2)^{n}n^n(o(1)/{n}^{1/2})^{2n-\bigO(n^{1/2})}=(o(1))^{2n-o(n)}(1/2)^{n}({n}^{1/2})^{\bigO(n^{1/2})}\leq(1/2)^{n}e^{\bigO(n^{1/2}\log n)}\to0.
\end{align*}
The statement follows by Markov's inequality.}.\label{item1lattice}
\item If\/ $d=\omega(n^{1/2})$\COMMENT{The condition on $\alpha_2$ and $\alpha_1$ holds for $L_k$. Indeed, it is clear that the number of edges is approximately $2n$, thus $\alpha_1\sim4/n$. Similarly, $6(k-2)^2\leq\# P_2\leq 6k^2$, hence $\alpha_2\sim12/n^2$. Moreover, $\gamma_1(L_k)=2$ and $\gamma_2(L_k)\leq2$ (see the proof in \cite[Theorem~1.4]{Riordan}). The bound follows.}, then,\/ $X_{L_k}(G_{n,d})=(1\pm o(1))^nn!p^{|L_k|}$ a.a.s.\label{item2lattice}
\end{enumerate}
\end{corollary}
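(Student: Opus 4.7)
The plan is to handle the two parts separately, leveraging the tools developed in \cref{section2,section31,section32}.

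For part (i), I would proceed by a first moment argument via Markov's inequality: it suffices to show that $\mathbb{E}[X_{L_k}(G_{n,d})]=o(1)$. Since $L_k$ is a spanning subgraph whose order grows with $n$, \cref{coro:expectation} does not apply directly. Instead, for a fixed labelled copy $L$ of $L_k$ on $V$, I would apply \cref{coro:switchprob} iteratively, conditioning successively on the edges of $L$. At every stage of the iteration the conditioning graph $H$ is a subgraph of $L_k$, so $\Delta(H)\leq\Delta(L_k)=4$, whence $\Delta(H)/d=\bigO(1/d)$ is already absorbed by $\varepsilon_{n,d}$. This yields
\[
\mathbb{P}[L\subseteq G_{n,d}]\leq p^{|L_k|}(1+\bigO(\varepsilon_{n,d}))^{|L_k|},
\]
where $p=d/(n-1)$. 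Summing over the $n!/\mathrm{aut}(L_k)$ labelled copies of $L_k$ on $V$ and using Stirling's formula gives
\[
\mathbb{E}[X_{L_k}(G_{n,d})]\leq\bigO(\sqrt{n})\cdot(n/e)^n\cdot p^{2n-\bigO(\sqrt{n})}\cdot e^{\bigO(n\varepsilon_{n,d})}.
\]
Taking logarithms, the leading contribution becomes $-n\log(n/d^2)$, which tends to $-\infty$ since $d^2=o(n)$. This dominates the $\bigO(n\varepsilon_{n,d})=\bigO(n/d+d+1)$ error (indeed $\log(n/d^2)\gg 1/d$ whenever $d\to\infty$), so $\mathbb{E}[X_{L_k}(G_{n,d})]\to 0$ and Markov's inequality completes part (i).

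For part (ii), I would invoke \cref{coro:riordan} directly. The required hypotheses on $L_k$ can be verified as follows: $L_k$ is bipartite, and hence triangle-free; $|L_k|=2k(k-1)\geq n$; $\Delta(L_k)=4=\bigO(1)$; a short calculation gives $\alpha_1(L_k)=(1+o(1))\cdot 4/n$ and, using $X_{P_2}(L_k)=6(k-2)^2+12(k-2)+4\sim 6n$, $\alpha_2(L_k)=(1+o(1))\cdot 12/n^2$, so $|\alpha_2(L_k)-\alpha_1(L_k)^2|\sim 4/n^2=\Omega(1/n^2)$; finally, as shown in the proof of \cite[Theorem~1.4]{Riordan}, $\gamma_1(L_k),\gamma_2(L_k)\leq 2$, so the density hypothesis of \cref{coro:riordan} reduces to $d=\omega(n^{1/2})$, which is given. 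Applying \cref{coro:riordan} yields
\[
X_{L_k}(G_{n,d})=(1\pm o(1))^n\mathbb{E}[X_{L_k}(G(n,m_d))]
\]
a.a.s., and the standard formula $\mathbb{E}[X_{L_k}(G(n,m_d))]=(1+o(1))\cdot(n!/\mathrm{aut}(L_k))\cdot p_d^{|L_k|}$ together with $p_d=(1\pm o(1))p$ produces the claimed estimate, since both $1/\mathrm{aut}(L_k)$ and $(p_d/p)^{|L_k|}$ are absorbed into a factor of the form $(1\pm o(1))^n$.

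The main obstacle will be the book-keeping in part (i): I must confirm that the compounded error $(1+\varepsilon_{n,d})^{|L_k|}$ arising from iterating \cref{coro:switchprob} is genuinely dominated by the rapid decay $p^{|L_k|}\cdot n!/(n/e)^n$, uniformly across the range $\omega(1)\leq d\leq o(n^{1/2})$. The comparison $n\log(n/d^2)\gg n\varepsilon_{n,d}$ is what makes this go through. Part (ii) is essentially a verification of hypotheses, with only the computation of $\alpha_1(L_k),\alpha_2(L_k),\gamma_1(L_k),\gamma_2(L_k)$ requiring any real work.
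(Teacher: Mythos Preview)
Your proposal is correct and follows essentially the same approach as the paper: part~(i) via iterated applications of \cref{coro:switchprob} (as in \eqref{equa:span0}) combined with Markov's inequality, and part~(ii) by verifying the hypotheses of \cref{coro:riordan} for $L_k$ and then rewriting $\mathbb{E}[X_{L_k}(G(n,m_d))]$ in terms of $p$. The only minor point is that your parenthetical justification ``$\log(n/d^2)\gg 1/d$'' covers just one of the three summands in $\varepsilon_{n,d}$; the remaining terms $1/n$ and $d/n$ are equally harmless since both tend to zero while $\log(n/d^2)\to\infty$, but you should say so explicitly.
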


In particular, as $|L_k|=2n\pm\bigO(n^{1/2})$, this determines the threshold for the existence of a spanning square lattice $L_k$ in $G_{n,d}$.
\cref{coro:lattice}(i) follows from \cref{coro:switchprob} and Markov's inequality,
while \cref{coro:lattice}(ii) follows from \cref{coro:riordan}.\COMMENT{The $0$ statement is given in \cref{coro:lattice}(i). To see the $1$ statement, observe that
\[
\mathbb{E}[X_{L_k}(G_{n,d})]\geq(n/3)^np^{2n-\bigO(n^{1/2})}\geq(n/3)^np^{2n}=(n/3)^n(\omega(1)/n^{1/2})^{2n}=(\omega(1))^{2n}\to\infty.
\]
By \ref{item2lattice}, $X_{L_k}(G_{n,d})\to\infty$ a.a.s.}

Much less is known for $r$-graphs when $r\geq3$.
For Hamilton cycles, we can apply the following result of \citet{DF13} on $\ell$-overlapping Hamilton cycles\COMMENT{These results do not appear in their paper, but can be derived from what they show.}.

\begin{theorem}[\cite{DF13}, Section 2]\label{teor:DuFri}
Let\/ $r>\ell\geq2$ and assume that\/ $(r-\ell)\mid n$.
Assume\/ $p=\omega(1/n^{r-\ell})$.
Then, a.a.s.
\[X_{C_n^\ell}(G^{(r)}(n,p))=(1\pm o(1))^nn!p^{n/(r-\ell)}.\]
\end{theorem}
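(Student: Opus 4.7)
The plan is to prove that $X \coloneqq X_{C_n^\ell}(G^{(r)}(n,p))$ concentrates around its expectation by combining a first moment calculation with Markov's inequality for the upper tail and the second moment method for the lower tail.

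First I would compute $\mathbb{E}[X]$ exactly. Rotations of an $\ell$-overlapping Hamilton cycle by multiples of $r-\ell$ vertices preserve its edge set, as does a single reflection, so $|\operatorname{Aut}(C_n^\ell)| = 2n/(r-\ell)$; hence the number of labelled $\ell$-overlapping Hamilton cycles on $[n]$ is $N \coloneqq n!(r-\ell)/(2n)$, and each contains exactly $n/(r-\ell)$ edges. Linearity of expectation together with the independence of edges in $G^{(r)}(n,p)$ then gives
\[\mathbb{E}[X] = N\, p^{n/(r-\ell)} = (1 \pm o(1))^n n!\, p^{n/(r-\ell)},\]
since the polynomial prefactor $(r-\ell)/(2n)$ is $(1\pm o(1))^n$ in multiplicative form. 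For the upper tail, Markov's inequality yields $\mathbb{P}[X \geq e^{\omega_n} \mathbb{E}[X]] \leq e^{-\omega_n}$ for any $\omega_n = \omega(1)$ with $\omega_n = o(n)$, so $X \leq (1+o(1))^n \mathbb{E}[X]$ a.a.s.

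For the lower tail, I would apply the second moment method. Writing $X = \sum_{C} I_C$ over $\ell$-overlapping Hamilton cycles $C$ on $[n]$ and exploiting the symmetry of $\mathcal{G}^{(r)}(n,p)$ under vertex permutations,
\[\frac{\mathbb{E}[X^2]}{\mathbb{E}[X]^2} = \frac{1}{N} \sum_{C} p^{-|E(C_0) \cap E(C)|}\]
for any fixed cycle $C_0$. Grouping cycles $C$ by $j \coloneqq |E(C_0) \cap E(C)|$, their common edges partition a subset of $C_0$ into edge-disjoint $\ell$-overlapping paths, and one then counts the extensions to a Hamilton cycle $C$ containing these paths, weighted by $p^{-j}$. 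Once this computation yields $\mathbb{E}[X^2]/\mathbb{E}[X]^2 = 1 + o(1)$, the Paley--Zygmund inequality gives, for every fixed $\epsilon > 0$,
\[\mathbb{P}\bigl[X \geq (1-\epsilon)^n \mathbb{E}[X]\bigr] \geq \bigl(1-(1-\epsilon)^n\bigr)^2 \frac{\mathbb{E}[X]^2}{\mathbb{E}[X^2]} \to 1.\]

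The main obstacle is the second moment estimate itself: uniformly over $j$, the blow-up factor $p^{-j}$ must be defeated by the reduction in the number of admissible cycles $C$ sharing exactly $j$ edges with $C_0$. The regime $p = \omega(1/n^{r-\ell})$ is precisely the one in which this balance succeeds, and the required count is essentially the second moment calculation underlying Dudek and Frieze's threshold result for $\ell$-overlapping Hamilton cycles; extracting the two-sided concentration asserted in the statement amounts to reading off both tails of the concentration that this second moment controls.
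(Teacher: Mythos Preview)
Your proposal is correct and matches the approach the paper relies on: the statement is cited from \cite{DF13} (the paper itself gives no proof, noting only that it can be derived from what Dudek and Frieze show), and the derivation is precisely the first-moment/Markov upper tail together with the second-moment/Paley--Zygmund lower tail you outline, with the crucial variance estimate being the computation in Section~2 of \cite{DF13}. Your identification of $p=\omega(1/n^{r-\ell})$ as the regime in which the $p^{-j}$ blow-up is beaten by the drop in the count of cycles sharing $j$ edges is exactly the content of that calculation.
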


Together with \cref{teor:generalspanning}, \cref{coro:switchprob} and Markov's inequality, this implies the following result.

\begin{corollary}\label{coro:overlappingcycles}
Let\/ $r>\ell\geq2$ and assume that\/ $(r-\ell)\mid n$.
Let\/ $p\coloneqq d/\binom{n-1}{r-1}$.
\begin{enumerate}[label=(\roman*)]
\item If\/ $d=o(n^{\ell-1})$ then\/ $\mathbb{P}[X_{C_n^\ell}(G^{(r)}_{n,d})>0]=o(1)$\COMMENT{Proof: This follows from \citet{AGIR16} if $d$ is constant.
So assume that $d=\omega(1)$.
The length of an $\ell$-overlapping Hamilton cycle is $n/(r-\ell)$. The expected number of $\ell$-overlapping Hamilton cycles satisfies
\[\mathbb{E}[X_{C_n^\ell}(G^{(r)}_{n,d})]\le\left((1+o(1))p\right)^{n/(r-\ell)} \cdot \frac{n!}{2n}.\]
(see \cite{AGIR16} for the exact formula). Indeed, the probability that a fixed copy of $C_n^\ell$ is present in $G^{(r)}_{n,d}$ is $\left((1+o(1))p\right)^{n/(r-\ell)}$ by repeated applications of \cref{coro:switchprob} (note that \cref{coro:expectation}\ref{item3} does not apply here). Moreover, the number of (undirected) cyclic orderings of $[n]$ is $\frac{n!}{2n}$. Each cyclic ordering gives rise to a copy of $C_n^\ell$ (note that some copy of $C_n^\ell$ might arise from several such orderings). Now suppose $p=o(n^{\ell-r})$. Then by Stirling's formula, $\mathbb{E}[X_{C_n^\ell}(G^{(r)}_{n,d})] \le (2p)^{n/{(r-\ell)}} (n/e)^n\le (1/2)^n=o(1)$. The statement follows by Markov's inequality.}.\label{hamitem1}
\item If\/ $d=\omega(n^{\ell-1})$ and\/ $d=o(n^{r-1})$, then a.a.s.\/
$X_{C_n^\ell}(G^{(r)}_{n,d})=(1\pm o(1))^{n}n!p^{n/(r-\ell)}$\COMMENT{Proof: The given conditions on $d$ ensure that the conditions from \cref{teor:DuFri} hold for $p_d$, thus $X_H(G^{(r)}(n,p_d))=(1\pm o(1))^{|H|}\mathbb{E}[X_H(G^{(r)}(n,p_d))]$. The result now follows from \cref{teor:generalspanning}.}\COMMENT{Note that the existence of a Hamilton cycle for $d=\Theta(n^{r-1})$ is covered by \citet{DFRSsandwitch}. Also, 
\[\left((1+o(1))p\right)^{n/(r-\ell)} \cdot \frac{n!}{2n}\geq(n/(2e))^n(d/n^{r-1})^{n/(r-\ell)}\geq(\omega(1))^{n/(r-\ell)}\to\infty.\]}.
\end{enumerate}
\end{corollary}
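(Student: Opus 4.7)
The plan is to handle the two parts separately as applications of the correlation machinery from \cref{section2} for part (i), and as an application of the sandwich result \cref{teor:sandwichnm} via \cref{teor:generalspanning} for part (ii). In both cases the proof is essentially an assembly of already-established ingredients.

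For part (i), when $d$ is bounded the statement already follows from the result of \citet{AGIR16} cited earlier. Assuming $d=\omega(1)$, I would use a first-moment calculation. There are at most $n!/(2n)$ copies of $C_n^\ell$ on $n$ labelled vertices, corresponding to unoriented cyclic arrangements of $[n]$, each determining at most a constant number of distinct $\ell$-overlapping Hamilton cycles. For a single fixed copy $F$ one builds $F$ edge-by-edge and applies \cref{coro:switchprob} at each step with $H$ equal to the set of previously-committed edges and $H'=\varnothing$. The bounded-degree property $\Delta(C_n^\ell)=O(1)$ guarantees that $\Delta(H)=o(d)$ throughout the iteration, so each application contributes a factor $(1\pm o(1))p$, giving
\[
\mathbb{P}\bigl[F\subseteq G_{n,d}^{(r)}\bigr]=\bigl((1+o(1))p\bigr)^{n/(r-\ell)}.
\]
Combining with Stirling's formula $n!=(1+o(1))^n(n/e)^n$ and the hypothesis $p=o(n^{\ell-r})$ (which follows from $d=o(n^{\ell-1})$) bounds $\mathbb{E}[X_{C_n^\ell}(G_{n,d}^{(r)})]$ by an expression of the form $(C\varepsilon_n)^{n/(r-\ell)}$ with $\varepsilon_n\to 0$, hence $o(1)$; Markov's inequality finishes the argument.

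For part (ii), the plan is to apply \cref{teor:generalspanning} with $H=C_n^\ell$ and take the concentration input from \cref{teor:DuFri}. The prerequisites are straightforward to verify: $\Delta(C_n^\ell)=O(1)$; since $\ell\geq2$ the assumption $d=\omega(n^{\ell-1})$ subsumes $d=\omega(\log n)$; and $p_d=\Theta(d/n^{r-1})=\omega(n^{\ell-r})$ puts us in the range where \cref{teor:DuFri} applies, yielding the concentration of $X_{C_n^\ell}$ in $\mathcal{G}^{(r)}(n,p_d)$. Choosing any sequence $\eta=o(1)$ which decays slowly enough to dominate both $\varepsilon_{n,d}$ and $\delta_{n,d}$ while remaining $\omega(1/n)$, \cref{teor:generalspanning} transfers the estimate to $\mathcal{G}_{n,d}^{(r)}$ in the form $X_{C_n^\ell}(G_{n,d}^{(r)})=(1\pm o(1))^n n! p_d^{n/(r-\ell)}$. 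Since $p_d=(1-\delta_{n,d})p$ with $\delta_{n,d}=o(1)$, replacing $p_d$ by $p$ contributes only an extra $(1\pm o(1))^n$ factor, which is absorbed.

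I do not expect a serious obstacle: the proof is essentially a packaging of the results of \cref{section2,section32} together with the known cited theorems. The most delicate bookkeeping is in part (i), where the iterated switching argument produces an accumulated error factor of the form $(1+O(\varepsilon_{n,d}))^{n/(r-\ell)}$, and one must check that the doubly-exponentially small factor $p^{n/(r-\ell)}\ll n^{-n}$ (arising from Stirling) dominates this blow-up. Concretely, because $p/n^{\ell-r}=o(1)$, the base of the exponent in the expected count is bounded above by $(r-1)!\,\varepsilon_n(1+o(1))/e^{r-\ell}$, which tends to zero, so the expectation decays exponentially in $n$ and Markov closes the estimate comfortably.
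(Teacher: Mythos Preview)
Your proposal is correct and follows essentially the same route as the paper: for (i) you invoke \citet{AGIR16} for bounded $d$ and otherwise run a first-moment argument via iterated applications of \cref{coro:switchprob} together with Stirling's approximation and Markov's inequality, and for (ii) you feed the concentration result of \cref{teor:DuFri} into \cref{teor:generalspanning}. Your additional remark about absorbing the passage from $p_d$ to $p$ in part (ii), and your check that the accumulated $(1+o(1))^{n/(r-\ell)}$ error in part (i) is dominated, are both accurate and match the paper's implicit reasoning.
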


In particular, this determines the threshold for the existence of $C_n^\ell$ in $\mathcal{G}_{n,d}^{(r)}$ for $\ell\in[r-1]\setminus\{1\}$, solving a conjecture of \citet{DFRSsandwitch}.
We note that \citet{AGIR16} recently determined the threshold for the appearance of loose Hamilton cycles in random regular $r$-graphs.
Their results imply that for every $r\geq3$ there exists a value $d_0$ (which is calculated explicitly in \cite{AGIR16}) such that if $d\geq d_0$, then $G_{n,d}^{(r)}$ a.a.s.~has a loose Hamilton cycle.
For $\ell\in[r-1]\setminus\{1\}$, they also proved that $\mathbb{P}[X_{C_n^\ell}(G^{(r)}_{n,d})>0]=o(1)$ holds under the much stronger condition that $d=o(n)$ if $r\geq4$ and $d=o(n^{1/2})$ if $r=3$ (but to deduce \cref{coro:overlappingcycles}\ref{hamitem1} we do rely on their result when $d$ is constant; we rely on \cref{coro:switchprob} when $d=\omega(1)$).


\section{Testing $F$-freeness in general $r$-graphs}\label{section4}

We now give lower and upper bounds on the query complexity of testing $F$-freeness in the general $r$-graphs model, where $F$ is a fixed $r$-graph.
In the special case when $F$ is a triangle, these (and other) bounds were already obtained by \citet{AlonAl08}.
Our proofs develop ideas from their paper.

In \cref{section41}, we observe a simple lower bound for the query complexity of any $F$-freeness tester.
In \cref{section42}, we use our results from \cref{section2,section3} to improve this bound for input $r$-graphs whose density is larger than a certain threshold.
The bound that we obtain, however, only holds for one-sided error testers; extending it to two-sided error testers, as \citet{AlonAl08} do with their triangle-freeness tester, would be an interesting problem.
Finally, \cref{section43} is devoted to upper bounds on the query complexity.


\subsection{A lower bound for sparser $r$-graphs}\label{section41}

In this section we provide a lower bound on the query complexity of testing $F$-freeness which is stronger than that in \cref{section42} when the $r$-graphs that are being tested are sparser (the range of the average degree $d$ for which this holds depends on the particular $r$-graph $F$).
Recall that our algorithms are allowed to perform two types of queries: vertex-set queries and neighbour queries.
For a fixed $r$-graph $F$, let $\mathit{ex}(n,F)$ denote the maximum number of edges of an $F$-free $r$-graph $G$ on $n$ vertices.

\COMMENT{\Cref{teor:bound2} does not apply to weak forests $F$ since $\mathit{ex}(n,F)=\Theta(n)$, i.e., $a\leq1$. For weak forests $F$, if the average degree of the input graph is $d=\omega(1)$, there is a trivial tester. The reason is that every $r$-graph $G$ with average degree $d$ has a subgraph of minimum degree at least $d/r$. Indeed, build such a subgraph greedily by deleting in turn a vertex that has degree less than $d/r$. This decreases the sum of the degrees by less than $d$, and the new graph has one less vertex, hence the average degree after each deletion does not decrease. This process has to end at some point. Using this, a weak forest can be embedded greedily into such a subgraph.}

\begin{proposition}\label{teor:bound2}
Let\/ $r\geq2$ and\/ $F$ be an\/ $r$-graph.
Let\/ $c,a>0$ be fixed constants such that\/ $c\cdot n^{a}\leq\mathit{ex}(n,F)$ and suppose that\/ $d=\Omega(1)$ and\/ $d=o(n^{a-1})$.
Then, any\/ $F$-freeness tester in\/ $r$-graphs must perform\/ $\Omega\left(n^{1-1/a}d^{-1/a}\right)$ queries, when restricted to input\/ $r$-graphs on\/ $n$ vertices of average degree\/ $d\pm o(d)$.
\end{proposition}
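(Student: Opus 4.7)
The plan is to apply Yao's minimax principle, producing distributions $\mathcal{D}_{\text{yes}}$ on $F$-free $r$-graphs and $\mathcal{D}_{\text{no}}$ on $r$-graphs that are $\varepsilon$-far from $F$-free, each supported on $n$-vertex $r$-graphs with exactly $m:=nd/r$ edges (hence of average degree exactly $d$), in such a way that any deterministic tester making $q=o(n^{1-1/a}d^{-1/a})$ queries has acceptance probabilities differing by only $o(1)$ under the two distributions; this contradicts the $2/3$ correctness requirement.

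I would set $s:=\lceil(m/c)^{1/a}\rceil$, so that $\mathit{ex}(s,F)\geq cs^a\geq m$ and $n/s=\Theta(n^{1-1/a}d^{-1/a})$. The YES template is any $F$-free $r$-graph $H_{\text{yes}}$ on vertex set $[s]$ with exactly $m$ edges (obtained by deleting edges from an extremal $F$-free $r$-graph on $[s]$). The NO template is an $r$-graph $H_{\text{no}}$ on $[s]$ with exactly $m$ edges that contains $\varepsilon m$ edge-disjoint copies of $F$ for a small constant $\varepsilon>0$; such an $H_{\text{no}}$ will be produced by a greedy packing of copies of $F$ inside $K_s^{(r)}$ (which has $\Theta(s^r)=\Omega(m)$ edges), then filling the remaining budget arbitrarily. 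Since any $F$-free subgraph of $H_{\text{no}}$ must omit at least one edge from each of the planted edge-disjoint copies, $H_{\text{no}}$ is $\varepsilon$-far from $F$-free. The two distributions are then obtained by sampling a uniformly random injection $\phi\colon[s]\to V$ and placing $\phi(H_{\text{yes}})$ (respectively $\phi(H_{\text{no}})$) on $U:=\phi([s])$, with an arbitrary choice of incident-edge orderings.

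To argue indistinguishability, note that all edges of the planted graph lie inside $U$. Hence a vertex-set query on an $r$-set $S$ can return an edge only if $S\subseteq U$ (probability $O((s/n)^r)$), and a neighbour query on a vertex $v$ can return a non-error answer only if $v\in U$ (probability $O(s/n)$). For an adaptive tester, I would proceed by step-by-step conditioning on the prefix of non-informative queries: each past non-informative neighbour query eliminates at most $\binom{n-1}{s-1}$ choices of $\phi$, and $q$ such constraints together remove only an $o(1)$ fraction of $\binom{n}{s}$, so the posterior on $\phi$ stays nearly uniform and the conditional probability of the next query hitting $U$ remains $(1+o(1))s/n$. A union bound then yields probability $o(1)$ that any of the $q$ queries hits $U$; off this rare event the entire transcript is identical under $\mathcal{D}_{\text{yes}}$ and $\mathcal{D}_{\text{no}}$.

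The main obstacle will be producing the edge-disjoint packing of $\varepsilon m$ copies of $F$ inside $K_s^{(r)}$, especially in the borderline case $a=r$ where the edge budget $\binom{s}{r}=\Theta(m)$ is only a constant factor above what the copies consume; for $a<r$ there is plenty of slack. A greedy packing argument with $\varepsilon$ chosen sufficiently small in terms of $c$, $r$, and $e_F$ should suffice. A secondary subtlety is making the adaptive union bound fully rigorous, which is routine but requires care since neighbour queries carry labelling information that must also be absorbed into the ``non-informative'' prefix.
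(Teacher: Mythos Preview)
Your proposal is correct and follows essentially the same approach as the paper: hide all edges inside a small random subset of vertices so that with $o(n^{1-1/a}d^{-1/a})$ queries the tester almost surely never touches a non-isolated vertex, and hence cannot distinguish the $F$-free instance from the far-from-$F$-free one.

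The only noteworthy difference is in the NO instance. You place both templates on the same support of size $s=\Theta((nd)^{1/a})$ and produce farness by greedily packing $\varepsilon m$ edge-disjoint copies of $F$ inside $K_s^{(r)}$. The paper instead lets the NO instance be a complete $r$-graph on a \emph{smaller} set of $(nd(r-1)!)^{1/r}$ vertices; since $F\subseteq K_{v_F}^{(r)}$, farness from $F$-freeness follows immediately from farness from $K_{v_F}^{(r)}$-freeness, which is an easy counting argument for cliques. This sidesteps your packing step entirely (and with it your ``main obstacle'' at $a=r$). Conversely, your approach has the cosmetic advantage that YES and NO live on supports of identical size, so the indistinguishability coupling is slightly more symmetric. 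Both arguments are equally short once written out; the paper's clique trick is perhaps the cleaner of the two. Your handling of the adaptive union bound is also more explicit than the paper's, which dispatches it informally in a single line.
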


Observe that the assumptions in the statement imply that $1<a\leq r$.
In particular, the result only applies for $r$-graphs $F$ such that $\mathit{ex}(n,F)$ is superlinear.

\begin{proof}
It suffices to construct two families of $r$-graphs on $n$ vertices $\mathcal{F}_1$ and $\mathcal{F}_2$ such that the following hold:
\begin{enumerate}[label=(\roman*)]
\item All $r$-graphs in $\mathcal{F}_1$ are $F$-free.\label{cond2.1}
\item All $r$-graphs in $\mathcal{F}_2$ are $\Theta(1)$-far from $F$-free.\label{cond2.2}
\item All $r$-graphs in both families have average degree $d\pm o(d)$.\label{cond2.3}
\item Consider an $r$-graph $G$ chosen from $\mathcal{F}_1\cup\mathcal{F}_2$ according to the following rule.
First choose $i\in[2]$ uniformly at random.
Then choose $G\in\mathcal{F}_i$ uniformly at random.
Then any algorithm that determines with probability at least $2/3$ whether $G\in\mathcal{F}_1$ or $G\in\mathcal{F}_2$ must perform at least $\Omega(n^{1-1/a}d^{-1/a})$ queries.\label{cond2.4}
\end{enumerate}

Let $H$ be an $F$-free $r$-graph on $(nd/(cr))^{1/a}$ vertices with $nd/r$ edges\COMMENT{Such an $r$-graph exists since $nd/r=c((nd/(cr))^{1/a})^a\leq\mathit{ex}((nd/(cr))^{1/a},F)$.}.
Let $\mathcal{F}_1$ be the family of all labelled $r$-graphs consisting of the disjoint union of $H$ on $(nd/(cr))^{1/a}$ vertices and $n-(nd/(cr))^{1/a}$ isolated vertices.
Let $\mathcal{F}_2$ be the family of all labelled $r$-graphs consisting of the disjoint union of a complete $r$-graph on a set of $({nd(r-1)!})^{1/r}$ vertices and $n-({nd(r-1)!})^{1/r}$ isolated vertices.

A simple computation shows that all $r$-graphs in both families have average degree $d\pm o(d)$\COMMENT{Graphs in $\mathcal{F}_1$ have $nd/r$ edges by definition, and hence, average degree $d$. On the other hand, \begin{align*}
\binom{(nd(r-1)!)^{1/r}}{r}r/n&=(nd(r-1)!)^{1/r}((nd(r-1)!)^{1/r}-1)\ldots((nd(r-1)!)^{1/r}-(r-1))/(n(r-1)!)\\
&=nd(r-1)!(1-\bigO((nd)^{-1/r}))/(n(r-1)!)=d\pm o(d).
\end{align*}}.
All $r$-graphs in $\mathcal{F}_1$ are $F$-free by definition.
Since the number of distinct $K_{v_F}^{(r)}$ in $K_k^{(r)}$ is $\Theta(k^{v_F})$\COMMENT{This is aymptotic in $k$, it should be obvious from the context.}, it is easy to check\COMMENT{Number of $K_{v_F}^{(r)}$ in $K_k^{(r)}$: $\Theta(k^{v_F})$. Number of $K_{v_F}^{(r)}$ destroyed by one edge: $\bigO(k^{v_F-r})$. Need to delete at least $\Theta(k^r)$ edges.} that all $r$-graphs in $\mathcal{F}_2$ are $\Theta(1)$-far from being $K_{v_F}^{(r)}$-free, and hence $\Theta(1)$-far from being $F$-free.
Thus, conditions \ref{cond2.1}, \ref{cond2.2} and \ref{cond2.3} hold.

Now consider any algorithm ALG that, given an $r$-graph $G$ chosen at random from either $\mathcal{F}_1$ or $\mathcal{F}_2$ as in \ref{cond2.4}, tries to determine with probability at least $2/3$ whether $G\in\mathcal{F}_1$ or $G\in\mathcal{F}_2$.
If $G\in\mathcal{F}_1$, then the probability of finding a vertex with positive degree with any given query is $\bigO(n^{1/a-1}d^{1/a})$\COMMENT{This is $\bigO$ and not $\Theta$ because for a vertex-set query the probability of getting a positive answer, even if one of the vertices involved has positive degree, is smaller (for such a query to have a positive answer, all $r$ vertices would need to lie in $H$).}.
Similarly, if $G\in\mathcal{F}_2$, the probability of finding a vertex with positive degree with any given query is $\bigO(n^{1/a-1}d^{1/a})$\COMMENT{because the number of vertices with positive degree in an $r$-graph in $\mathcal{F}_2$ is $({nd(r-1)!})^{1/r}\leq(nd/(cr))^{1/a}=|V(H)|$, as in $\mathcal{F}_2$ the non-empty subgraph is a clique, which maximises the density, and the number of edges in $r$-graphs in $\mathcal{F}_1$ and $\mathcal{F}_2$ is roughly the same.}.
Hence, if the number of queries is $Q=o(n^{1-1/a}d^{-1/a})$, by the union bound, one has that the probability of finding any such vertex is $o(1)$.
So a.a.s.~ALG only finds a set of isolated vertices, of size $\bigO(Q)$, after the first $Q$ queries.
Thus we conclude that, for $i\in[2]$, $\mathbb{P}[G\in\mathcal{F}_i\mid\text{ALG finds only isolated vertices}]=1/2\pm o(1)$\COMMENT{To be precise, we have that $\mathbb{P}[\text{algorithm finds only isolated vertices}\mid G\in\mathcal{F}_1]=1-o(1)$ and $\mathbb{P}[\text{algorithm finds only isolated vertices}\mid G\in\mathcal{F}_2]=1-o(1)$, hence $\mathbb{P}[\text{algorithm finds only isolated vertices}]=1/2\mathbb{P}[\text{algorithm finds only isolated vertices}|G\in\mathcal{F}_1]+1/2\mathbb{P}[\text{algorithm finds only isolated vertices}|G\in\mathcal{F}_2]=1-o(1)$. The conclusion follows by Bayes's formula.}.
Therefore, the algorithm cannot distinguish between $r$-graphs in $\mathcal{F}_1$ and $\mathcal{F}_2$ with sufficiently high probability with only $Q$ queries.
\end{proof}

If $F$ is a non-$r$-partite $r$-graph, then $\mathit{ex}(n,F)=\Theta(n^{r})$.
Using this, \cref{teor:bound2} asserts that, for any non-$r$-partite $r$-graph $F$, testing $F$-freeness needs $\Omega(({n^{r-1}}/{d})^{{1}/{r}})$ queries.
This implies that for all non-$r$-partite $r$-graphs $F$ there is no constant time $F$-freeness tester for input $r$-graphs $G$ on $n$ vertices with $d=o(n^{r-1})$ and $d=\Omega(1)$, as opposed to the constant time algorithms existing for dense $r$-graphs.

In more generality, \cref{teor:bound2} shows that there can be no $F$-freeness tester that requires a constant number of queries whenever the input $r$-graph $G$ has average degree $d=o(\mathit{ex}(n,F)/n)$ and $d=\Omega(1)$.
On the other hand, if the number of edges of the input $r$-graph is larger than the Tur\'an number of $F$, then there is a trivial $F$-freeness tester: an algorithm that rejects every input, which has constant\COMMENT{zero} query complexity.
As another example, it is well-known that $ex(n,C_4)=\Theta(n^{3/2})$.
With this, we conclude that any algorithm testing $C_4$-freeness in graphs with average degree $d$, when $d=o(n^{1/2})$ and $d=\Omega(1)$, must perform at least $\Omega(({n}/{d^2})^{1/3})$ queries.

The asymptotic growth of $\mathit{ex}(n,F)$ is not known for every $F$.
Let $\beta(F)\coloneqq\frac{v_F-r}{e_F-1}$.
An easy probabilistic argument\COMMENT{Idea of the proof: consider a random $r$-graph (Erdos-Renyi model) with probability $p$. Compute the expected number of copies of $F$. Set $p$ so that this expected number of copies is half the expected number of edges. Delete one edge from each copy; the number of edges is still $\Theta(np)$ and it is now $F$-free.} shows that $\mathit{ex}(n,F)=\Omega\left(n^{r-\beta(F)}\right)$.
This bound is superlinear in $n$ as long as $\beta(F)<r-1$, which holds for every connected $F$ that is not a weak tree\COMMENT{For a weak tree $F$ we have that $v_F=(r-1)e_F+1$. For any other connected $r$-graph $F$ we have $v_F<(r-1)e_F+1$ and the claim follows by reordering.}.
Using this bound on $\mathit{ex}(n,F)$, \cref{teor:bound2} asserts that for any connected $r$-graph $F$ other than a weak tree the number of queries performed by any $F$-freeness tester on input $r$-graphs on at least $\Omega(n)$ and at most $o\left(n^{r-\beta(F)}\right)$ edges is $\Omega(({n^{r-1-\beta(F)}}/{d})^{{1}/{(r-\beta(F))}})$.


\subsection{A lower bound for denser $r$-graphs}\label{section42}

The lower bound on the query complexity of $F$-freeness testers we present here improves the bound in \cref{section41} when $d$ is large enough and either $r=2$ or $r\geq3$ and $F$ is non-$r$-partite.
However, this approach only works for one-sided error algorithms.
The answer given by one-sided error algorithms must always be correct when the input \mbox{$r$-graph} is $F$-free, so any algorithm we consider must accept if it cannot rule out the possibility of $G$ being $F$-free.
Thus, in order to prove that the query complexity is at least $Q$, say, (roughly speaking) the idea is to find a family $\mathcal{F}$ of $r$-graphs which are far from being $F$-free and such that any algorithm, given an $r$-graph chosen uniformly at random from $\mathcal{F}$ as an input, must perform at least $Q$ queries in order to find a copy of $F$ (with high probability).
As we will prove, the family $\mathcal{F}_{n,d(n)}^{(r)}$ described below has the required properties.

Let $F$ be an $r$-graph other than a weak forest.
Recall that $X_F(G)$ denotes the number of copies of $F$ in $G$.
Let $\Phi_{F,n,d}\coloneqq\min\{\mathbb{E}[X_K(G_{n,d}^{(r)})]:K\subseteq F, e_K>0\}$.
Taking $K$ to be an edge shows that $\Phi_{F,n,d'}\leq nd'/r$ for any $d'$.

Assume now that $d(n)=\omega(1)$ and $d(n)=o(n^{r-1})$. Choose $\eta(n)$ such that $\eta(n)=o(1)$.
Let
\[n_*\coloneqq \max\{n_0\leq n:\Phi_{F,n_0,d(n)}\ge (1-\eta(n))n_0d(n)/r\}.\]
We claim that $n_*$ always exists.
Indeed, let $n_1\le n$ be such that there exists an $r$-graph~$G^*$ on $n_1$ vertices with average degree $d(n)$ and at least $(1-\eta(n)^2)\binom{n_1}{r}$ edges.%
\COMMENT{To see that we can choose $n_1\le n$, use that $d=o(n^{r-1})$.}
Thus, $n_1=(1\pm o(1))((r-1)!d(n))^{1/(r-1)}$ and, since $d=\omega(1)$, we have $n_1=\omega(1)$.
Consider any $G^*$ as above.
Given any $K\subseteq F$, note that the number of copies of $K$ in $G^*$ is given by $(1\pm \eta(n))\binom{n_1}{v_K}\frac{v_K!}{\mathrm{aut}(K)}$.%
\COMMENT{$G^*$ is almost complete, and each missing edge destroys at most $n_1^{v_K-r}$ copies of $K$. Thus the number of copies of $K$ in $G^*$ is given by $\binom{n_1}{v_K}\frac{v_K!}{\mathrm{aut}(K)}\pm \eta^2\binom{n_1}{r}n_1^{v_K-r}=(1\pm \eta)\binom{n_1}{v_K}\frac{v_K!}{\mathrm{aut}(K)}$.}
(This can be seen by observing that $G^*$ is ``almost complete'', and that every edge that is removed from a complete $r$-graph on $n_1$ vertices affects at most $n_1^{v_K-r}$ copies of $K$; since only $\eta(n)^2\binom{n_1}{r}$ edges are removed, this gives a total of at most $\eta(n)^2\binom{n_1}{r}n_1^{v_K-r}=o(\eta(n)\binom{n_1}{v_K})$ copies of $K$ affected by the missing edges.)
Among all $K\subseteq F$ with $e_K\geq1$, this expression achieves its minimum (if $n$ is sufficiently large) for a single edge.
Hence $\Phi_{F,n_1,d(n)}\ge (1-\eta(n)) n_1d(n)/r$%
\COMMENT{get $\Phi_{F,n_1,d(n)}\ge (1- \eta)\binom{n_1}{r}\ge  (1- \eta)|G^*|=(1-\eta) n_1d(n)/r$}
and $n_*\geq n_1$ must exist\footnote{Note that here we are using the fact that there exist very dense $d(n)$-regular $r$-graphs. This follows from \cref{remark:intro} by considering the complement.}.

\begin{lemma}\label{lema:cosanueva2}
Let\/ $F$ be a fixed\/ $r$-graph other than a weak forest and let\/ $d(n)$ be such that\/ $d(n)=\omega(1)$ and\/ $d(n)=o(n^{r-1})$.
Then\/ $d(n)=o(n_*^{r-1})$.
\end{lemma}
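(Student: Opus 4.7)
The plan is to exhibit some integer $n_0 \leq n$ with $n_0 = \omega(d(n)^{1/(r-1)})$ and $r \mid n_0 d(n)$ for which $\Phi_{F,n_0,d(n)} \geq (1-\eta(n)) n_0 d(n)/r$; by the definition of $n_*$ this forces $n_* \geq n_0$, and hence $d(n)/n_*^{r-1} \leq d(n)/n_0^{r-1} = o(1)$. To prepare for this, I would first compute, for each $K \subseteq F$ with $e_K \geq 1$, the ratio $\mathbb{E}[X_K(G_{n_0,d(n)}^{(r)})] / (n_0 d(n)/r)$ under the assumption $d(n) = o(n_0^{r-1})$: with $p_{n_0} \coloneqq (r-1)!\,d(n)/n_0^{r-1}$, \cref{coro:expectation} gives
\[
\frac{\mathbb{E}[X_K(G_{n_0,d(n)}^{(r)})]}{n_0 d(n)/r} \;=\; (1 \pm o(1))\, C_K\, d(n)^{e_K - 1}\, n_0^{-a_K},
\]
where $a_K \coloneqq (r-1)e_K - v_K + 1$ and $C_K > 0$ depends only on $K$ (and the ratio is exactly $1$ when $K$ is a single edge).

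The key combinatorial input is that, since $F$ is not a weak forest, the set $\cK^+ \coloneqq \{K \subseteq F : a_K > 0\}$ is non-empty and satisfies
\[
\mu^* \coloneqq \min_{K \in \cK^+} \frac{e_K - 1}{a_K} \;>\; \frac{1}{r-1}.
\]
Non-emptiness follows because either two edges of $F$ share at least two vertices (together forming a $K$ with $e_K = 2$ and $a_K \geq 1$), or $F$ contains a loose cycle (for which $a_K = 1$). For the strict inequality, any $K \in \cK^+$ must have $e_K \geq 2$ (otherwise $a_K = r - v_K \leq 0$), and then $v_K \geq r+1$ since two distinct $r$-edges span at least $r+1$ vertices; an elementary manipulation shows that $(e_K - 1)/a_K > 1/(r-1)$ is equivalent to $v_K > r$.

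With this in hand, I would fix any $\alpha$ with $1/(r-1) < \alpha < \mu^*$ and take $n_0 \coloneqq \min(n,\, \lceil d(n)^\alpha \rceil)$, perturbed by $\bigO(1)$ to ensure $r \mid n_0 d(n)$. Either $n_0 = n$ and $d(n) = o(n_0^{r-1})$ is the standing hypothesis of the lemma, or $n_0 \sim d(n)^\alpha$, in which case $n_0^{r-1} \sim d(n)^{\alpha(r-1)}$ with $\alpha(r-1) > 1$, again yielding $d(n) = o(n_0^{r-1})$. It then remains to check that the displayed ratio is at least $1 - \eta(n)$ for every $K \subseteq F$ with $e_K \geq 1$: for $K$ a single edge it equals $1$; for $K$ a weak forest with $e_K \geq 2$ one has $a_K \leq 0$ and $e_K - 1 \geq 1$, so the ratio grows with $d(n)$; and for $K \in \cK^+$, the choice $\alpha < (e_K - 1)/a_K$ makes $d(n)^{e_K - 1} n_0^{-a_K}$ tend to infinity, both in the regime $n_0 \sim d(n)^\alpha$ (directly) and in the regime $n_0 = n$ (using the consequence $d(n) \geq (n-1)^{1/\alpha}$ of $n \leq \lceil d(n)^\alpha \rceil$). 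This gives $n_* \geq n_0$ and completes the argument.

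The main obstacle is the strict inequality $\mu^* > 1/(r-1)$: this is precisely where the assumption that $F$ is not a weak forest is used, and without it one would only get $n_* = \Theta(d(n)^{1/(r-1)})$. Once $\mu^* > 1/(r-1)$ is in place, the rest is a routine application of \cref{coro:expectation}.
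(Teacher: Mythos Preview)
Your argument is correct and shares the same key combinatorial insight as the paper's proof --- namely that for every $K\subseteq F$ with $e_K\ge 2$ one has $v_K>r$, which forces the relevant exponent $a_K/(e_K-1)$ to be strictly less than $r-1$ --- but the two proofs are organised dually. The paper works on the ``degree'' side: it defines the threshold degree $d^*(n,K)$ at which $\mathbb{E}[X_K]$ first reaches $nd/r$, shows $d_F^*(n)=o(n^{r-1})$, and then argues by cases (if $n_*<n$, pass to $n_+$ and deduce $d(n)<2d_F^*(n_+)$). You work on the ``size'' side: you encode the same exponent comparison as $\mu^*>1/(r-1)$, then directly \emph{construct} a witness $n_0\sim d(n)^\alpha$ with $1/(r-1)<\alpha<\mu^*$ and verify $\Phi_{F,n_0,d(n)}\ge(1-\eta)n_0d(n)/r$. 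Your route is slightly more explicit (no auxiliary $n_+$, no appeal to the maximality of $n_*$ beyond $n_*\ge n_0$), at the cost of having to juggle two regimes for $n_0$.

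One small imprecision: your case split reads ``$K$ a single edge / $K$ a weak forest with $e_K\ge2$ / $K\in\cK^+$'', but this is not exhaustive --- there are $K\subseteq F$ with $e_K\ge2$, $a_K\le0$ that are not weak forests (e.g.\ a loose cycle together with enough isolated vertices). The fix is cosmetic: the reasoning you give for the middle case (``$a_K\le0$ and $e_K-1\ge1$, so the ratio grows'') does not use the weak-forest structure at all, so you should simply relabel that case as ``$K$ with $e_K\ge2$ and $a_K\le0$''. With that relabelling the three cases are genuinely exhaustive and the proof goes through.
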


\begin{proof}
For any fixed $r$-graph $K$ with $e_K>1$, let $d^*(n,K)$ be the smallest integer such that $\mathbb{E}[X_K(G_{n,d^*(n,K)}^{(r)})]\geq nd^*(n,K)/r$.%
\COMMENT{Let $d'=\binom{n-1}{r-1}$. Then $G^{(r)}_{n,d'}=K^{(r)}_n$ and so
$\mathbb{E}[X_K(G_{n,d'})]=\binom{n}{v_K}\frac{v_K!}{\mathrm{aut}(K)}\ge nd'/r$.}
Let $d_F^*(n)\coloneqq\max_{K\subseteq F:e_K>1}\{d^*(n,K)\}$.%
\COMMENT{Note that some $K$ exists as $e_F \ge 2$ as $F$ is not a weak forest.}
We claim that $d_F^*(n)=o(n^{r-1})$.
To prove the claim, note that, by \cref{coro:expectation}\ref{item3}, for any $K\subseteq F$ with $e_K>1$ we have that \[d^*(n,K)=\Theta\left(\left\lceil n^{\frac{(r-1)e_K-v_K+1}{e_K-1}}\right\rceil\right)\COMMENT{$n^{v_K} (d/n^{r-1})^{e_K}=nd$ iff $d^{e_K-1}=n^{-(v_K-1-e_K(r-1))}$.
Note that the exponent could be $<0$, but is well-defined since $K$ is not a single edge.
If the exponent is $<0$, then $d^*(n,K)=\Theta(1)$ as $d^*(n,K)\in\mathbb{N}$ by definition.}.\]%
In particular,  $d^*(n,K)=o(n^{r-1})$ as $v_K>r$.
The claim follows by taking the maximum over all~$K$.

Returning to the main proof, we now consider two cases.
If $n_*=n$, then $d(n)=o(n_*^{r-1})$ by assumption.
So suppose $n_* < n$.
Let $n_+> n_*$ be the smallest integer such that there exists a $d(n)$-regular $r$-graph on $n_+$ vertices.
So $n_+\le 2n_*$\COMMENT{take a disjoint union of two $d(n)$-regular $r$-graphs on $n_*$ vertices each} (since a $d(n)$-regular $r$-graph on $2n_*$ vertices can be constructed by duplicating one on $n_*$ vertices) and $n_+ \le n$\COMMENT{as we only consider $d(n)$ if there exist a $d(n)$-regular $r$-graph on $n$ vertices (though that's not quite explicit in the lemma...)} (because $d(n)=o(n^{r-1})$, see \cref{remark:intro}).
By the definition of $n_*$, $\Phi_{F,n_+,d(n)}<(1-\eta(n))n_+d(n)/r$.
In particular, there exists $K\subseteq F$ with $e_K\geq2$\COMMENT{For one single edge, the expected number of copies is exactly the number of edges, and this is not possible in this range.} such that $\mathbb{E}[X_K(G_{n_+,d(n)}^{(r)})]<(1-\eta(n))n_+d(n)/r$.
By the definition of $d^*(n,K)$ and \cref{coro:expectation}\ref{item3}, we then have that $d(n)<2d^*(n_+,K)$.
This in turn implies that $d(n)<2d^*_F (n_+)$.
But $d_F^*(n_+)=o(n_+^{r-1})$ by the above claim, and thus $d(n)=o(n_*^{r-1})$.
\end{proof}

Let $t\coloneqq\lfloor n/n_*\rfloor$.
Define $\mathcal{F}_{n,d(n)}^{(r)}$ by considering all possible partitions of $V$ into sets $V_1,\ldots,V_t$ of size 
\begin{equation}\label{equa:ntildedef}
\tilde n\coloneqq n/t
\end{equation}
and, for each of them, all possible labelled $d(n)$-regular $r$-graphs $G_i$ on each of the sets $V_i$\COMMENT{Note that $n_*\leq\tilde n\leq2n_*$, so the asymptotic ($\Theta$) behaviour is unaffected by this. Formally, we need to choose the $V_i$ to only have almost equal size in order to ensure that there exists a $d(n)$-regular $r$-graph with vertex set $V_i$.}.
By \cref{lema:cosanueva2}, $d(n)=o(\tilde n^{r-1})$ and so the $G_i$ are well-defined (see \cref{remark:intro}).
With these definitions, all the results in \cref{section2,section31} can be applied to each family $\mathcal{F}_{n,d(n)}^{(r)}[V_i]$ consisting of the subgraphs of each $G\in\mathcal{F}_{n,d(n)}^{(r)}$ restricted to vertex set $V_i$, and hence to $\mathcal{F}_{n,d(n)}^{(r)}$ by summing over all $i\in[t]$.

\begin{lemma}\label{lema:distrib}
Let\/ $F$ be a fixed, connected\/ $r$-graph other than a weak tree and let\/ $d(n)$ be such that\/ $d(n)=\omega(1)$ and\/ $d(n)=o(n^{r-1})$.
Let\/ $\tilde n$ and\/ $\mathcal{F}_{n,d(n)}^{(r)}$ be as defined above.
Then, an\/ $r$-graph\/ $G\in\mathcal{F}_{n,d(n)}^{(r)}$ chosen uniformly at random contains\/ $\Theta(nd(n))$ edge-disjoint copies of\/ $F$ a.a.s.
\end{lemma}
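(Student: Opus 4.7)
The strategy is to reduce the problem to a per-block analysis and then aggregate over the $t$ blocks. Since $F$ is connected and $G$ is the vertex-disjoint union $G_1\sqcup\dots\sqcup G_t$, every copy of $F$ in $G$ lies entirely in some $G_i$, so $D_F(G)=\sum_{i=1}^t D_F(G_i)$, where $D_F(H)$ denotes the maximum number of edge-disjoint copies of $F$ in $H$. The upper bound $D_F(G)=\bigO(nd)$ is immediate from $|E(G)|=nd/r$ and the fact that each copy uses $e_F$ edges. For the matching lower bound, I plan to apply \cref{lema:disjointcopies} to each $G_i$ (which, conditional on the partition, is an independent uniformly random element of $\mathcal{G}_{\tilde n,d}^{(r)}$) to obtain $D_F(G_i)=\Omega(\tilde nd)$ a.a.s., and then to sum the contributions.

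The key step is to show that $\Phi_{F,\tilde n,d}=\Theta(\tilde nd)$, so that \cref{lema:disjointcopies} yields $D_F(G_i)=\Omega(\tilde nd)$. The hypothesis $d=o(\tilde n^{r-1})$ follows from $\tilde n\geq n_*$ together with \cref{lema:cosanueva2}, while $d=\omega(1)$ is assumed. For the size of $\Phi_{F,\tilde n,d}$, \cref{coro:expectation}\ref{item3} gives, for every $K\subseteq F$ with $e_K>0$,
\[\frac{\mathbb{E}[X_K(G_{\tilde n,d}^{(r)})]}{\mathbb{E}[X_K(G_{n_*,d}^{(r)})]}=\left(\frac{\tilde n}{n_*}\right)^{v_K-e_K(r-1)}(1+o(1)).\]
Since $\tilde n/n_*\in[1,2)$ and $|v_K-e_K(r-1)|$ is bounded by a constant depending only on $F$, this ratio is bounded below by some $c_K=c_K(F)>0$. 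Combined with the defining property $\Phi_{F,n_*,d}\geq(1-\eta)n_*d/r$, with $n_*\geq\tilde n/2$, and with the trivial upper bound $\Phi_{F,\tilde n,d}\leq\tilde nd/r$ (obtained by taking $K$ to be a single edge), this yields $\Phi_{F,\tilde n,d}=\Theta(\tilde nd)$, as required.

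Finally, conditional on the partition $(V_1,\dots,V_t)$, the subgraphs $G_1,\dots,G_t$ are mutually independent. Each event $\mathcal{E}_i\coloneqq\{D_F(G_i)\geq c_1\tilde nd\}$ has probability $1-o(1)$ for a suitable constant $c_1>0$, so a Chernoff bound applied to $\sum_i\mathbf{1}_{\mathcal{E}_i}$ (or a union bound in the regime $t=\bigO(1)$) gives $\sum_i\mathbf{1}_{\mathcal{E}_i}\geq t/2$ a.a.s., whence $D_F(G)\geq(c_1/2)\,t\tilde nd=\Omega(nd)$ a.a.s. I expect the principal subtlety to be the verification of $\Phi_{F,\tilde n,d}=\Theta(\tilde nd)$: since some subgraphs $K\subseteq F$ (those with $v_K-e_K(r-1)<0$) have $\mathbb{E}[X_K(G_{n,d}^{(r)})]$ decreasing in $n$, one cannot directly transfer the bound at $n_*$ to $\tilde n$. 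This is precisely where the careful choice of $n_*$—as the largest block size at which a single edge still essentially attains the minimum in $\Phi_F$—and the fact that $\tilde n$ is only a bounded multiple of $n_*$ are both exploited.
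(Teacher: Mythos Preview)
Your proposal is correct and follows essentially the same approach as the paper: reduce to the blocks $G_i$, use $n_*\le\tilde n\le 2n_*$ together with \cref{coro:expectation}\ref{item3} and the definition of $n_*$ to conclude $\Phi_{F,\tilde n,d}=\Theta(\tilde n d)$, apply \cref{lema:disjointcopies} to each block, and then use independence to argue that a.a.s.\ at least half of the $G_i$ contribute $\Omega(\tilde n d)$ edge-disjoint copies. Your explicit ratio computation for $\mathbb{E}[X_K]$ at $\tilde n$ versus $n_*$ is a helpful elaboration of what the paper leaves as a one-line remark, and your handling of the aggregation step (Chernoff for $t\to\infty$, union bound for $t=\bigO(1)$) is slightly more careful than the paper's.
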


Note that this immediately implies that a.a.s.~a graph $G\in\mathcal{F}_{n,d(n)}^{(r)}$ chosen uniformly at random is $\varepsilon$-far from being $F$-free for some fixed $\varepsilon>0$.

\begin{proof}
Let $D_F(G)$ denote the maximum number of edge-disjoint copies of $F$ in an $r$-graph $G$.
Recall that $\mathcal{F}_{n,d(n)}^{(r)}$ is obtained by partitioning the set of vertices into sets $V_1,\ldots,V_t$ of size $\tilde{n}$, where $t=n/\tilde{n}$, and considering $d(n)$-regular $r$-graphs $G_i$ on each of the $V_i$, where each $G_i$ is chosen uniformly at random from $\mathcal{G}_{\tilde{n},d(n)}^{(r)}$, independently of each other.
Note that  $n_*\leq\tilde n\leq2n_*$. 
Together with the definition of $n_*$ and \cref{coro:expectation}\ref{item3}, this implies that the value of $\Phi_{F,\tilde n,d(n)}$ in each $G_i$ satisfies $\Phi_{F,\tilde n,d(n)}=\Theta(\tilde nd(n))$.
Then, by \cref{lema:disjointcopies}, for any fixed $i\in[t]$, the maximum number of edge-disjoint copies of $F$ in $G_i$ is $D_F(G_i)=\Theta(\tilde nd(n))$ a.a.s.

We now claim that a graph $G\in\mathcal{F}_{n,d(n)}^{(r)}$ chosen uniformly at random a.a.s.~satisfies that $D_F(G)=\Theta(nd(n))$.
Observe that the bound $D_F(G)=\bigO(nd(n))$ is trivial, as $G$ has exactly $nd(n)/r$ edges.
For the lower bound, since $D_F(G_i)=\Theta(\tilde nd(n))$ a.a.s.~for each $i\in[t]$, by the independence of the choice of $G_i$ we have that a.a.s.~at least half of the graphs $G_i$ satisfy this equality.
Therefore, $D_F(G)=\Omega(t\tilde nd(n))=\Omega(nd(n))$.
\end{proof}

We now provide a proof for the lower bound on the complexity of any algorithm that tests $F$-freeness in $r$-graphs (for graphs and non-$r$-partite $r$-graphs $F$ with $r\geq3$).
In order to do so, consider any algorithm ALG that performs $Q$ queries given an input $r$-graph $G$ on $n$ vertices with average degree $d(n)\pm o(d(n))$.
ALG will retrieve some information about $G$ from the queries it performs, namely a set of $r$-sets $E_1\subseteq E(G)$, a set of $r$-sets $E_2\subseteq E(\overline{G})$ and (potentially) some vertex degrees of $G$, i.e.~a set $\mathcal{D}\subseteq\{(v,d_v):v\in V(G), d_v=\deg_G(v)\}$.
We call the information retrieved by ALG after $Q$ queries the \textit{history} of $G$ seen by ALG, and denote it as $(E_1,E_2,\mathcal{D})$.
We say that the history of $G$ seen by ALG is \textit{simple} if $E_1$ forms a weak forest and for all $(v,d_v)\in\mathcal{D}$ we have that $d_v=\bigO(d(n))$.

We will allow our algorithm to find weak forests in the input graphs.
Thus we assume that $F$ is not a weak forest, that is, $F$ contains at least two edges whose intersection has size at least $2$ or a loose cycle.
In order to prove our bound we first show the following result.

\begin{lemma}\label{teor:secondbound2}
Let\/ $F$ be an\/ $r$-graph which is not a weak forest and define\/ $\tilde n$ as in \eqref{equa:ntildedef}.
Assume that\/ $d(n)=\omega(1)$ and\/ $d(n)=o(n^{r-1})$.
Suppose ALG is an algorithm whose input is an\/ $r$-graph\/ $G\in\mathcal{F}_{n,d(n)}^{(r)}$ and which for at least\/ $1/3$ of the\/ $r$-graphs\/ $G\in\mathcal{F}_{n,d(n)}^{(r)}$ sees with probability at least\/ $1/3$ a history which is not simple.
Then, ALG must perform\/ $\Omega(\min\{d(n),\tilde{n}^{r-1}/d(n),\tilde{n}^{1/2}\})$ queries.
\end{lemma}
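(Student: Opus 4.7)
The plan is to apply Yao's minimax principle, reducing to the case of a deterministic algorithm ALG making $Q$ queries on an input $G$ drawn uniformly from $\mathcal{F}_{n,d(n)}^{(r)}$, and to show that when $Q=o(\min\{d(n),\tilde n^{r-1}/d(n),\tilde n^{1/2}\})$, the history ALG sees is simple with probability $1-o(1)$. The first observation I would use is that every $G\in\mathcal{F}_{n,d(n)}^{(r)}$ is $d(n)$-regular on each block, so the returned degrees $d_v=d(n)$ automatically satisfy $d_v=\bigO(d(n))$; consequently the history is non-simple if and only if $E_1$ is not a weak forest, which means either (B1) two revealed edges share at least two vertices, or (B2) the revealed edges contain a loose cycle. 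Since edges are supported inside the blocks $V_i$, it suffices to analyse one block of size $\tilde n$ and union-bound over the $t=\Theta(n/\tilde n)$ blocks at the end.

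I would process ALG's queries sequentially and maintain the history $\mathcal{H}_j=(E_1^{(j)},E_2^{(j)},\mathcal{D}^{(j)})$ after the $j$-th query; because ALG is deterministic, the $(j{+}1)$-st query is a function of $\mathcal{H}_j$. The assumption $Q=o(d(n))$ ensures that at every stage $|E_1^{(j)}|+|E_2^{(j)}|\leq j\leq Q=o(d(n))$ and the revealed hypergraphs have maximum degree $o(d(n))$, so \cref{coro:switchprob} applies inside the relevant block: for any fresh $r$-set $e\subseteq V_i$,
\[
\mathbb{P}\left[e\in G\mid\mathcal{H}_j\right]=(1\pm o(1))(r-1)!\,d(n)/\tilde n^{r-1}.
\]
This uniform-in-history estimate is the sole probabilistic input to the rest of the argument.

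I would then split the analysis by query type and by bad event. For vertex-set queries, \cref{coro:switchprob} gives positive-answer probability $\bigO(d(n)/\tilde n^{r-1})$ per query, so $Q=o(\tilde n^{r-1}/d(n))$ implies that, with probability $1-o(1)$, no vertex-set query ever returns an edge; thus, up to $o(1)$ error, every edge in $E_1$ arises from a neighbour query. For neighbour queries, an auxiliary switching argument (in the spirit of \cref{lema:switch1,lema:switch2}) would show that, conditional on $\mathcal{H}_j$, the returned edge $e\ni v$ is distributed over $r$-sets containing $v$ in a nearly uniform way: for every fixed vertex $u\in V_i\setminus\{v\}$, $\mathbb{P}[u\in e\mid\mathcal{H}_j]=\bigO(1/\tilde n)$. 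Applying this pointwise: for (B1), a newly revealed neighbour edge creating a second shared vertex with some already-revealed edge costs probability $\bigO(Q/\tilde n)$ per step, summing to $\bigO(Q^2/\tilde n)=o(1)$ under $Q=o(\tilde n^{1/2})$; for (B2), closing a loose path into a loose cycle requires the new edge to contain a prescribed additional endpoint, again with probability $\bigO(1/\tilde n)$ per step, summed over the at most $Q$ candidate loose paths in $E_1^{(j)}$ for an $\bigO(Q^2/\tilde n)=o(1)$ total. The constraint $Q=o(d(n))$ enters when bounding neighbour queries that would otherwise exhaust all $d(n)$ incident edges at a single vertex and make the switching estimates degenerate.

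The main obstacle I expect is the careful reduction of a neighbour-query answer to the edge-event framework of \cref{coro:switchprob}. A neighbour query does not directly ask whether a specified $r$-set is an edge: it returns the $i$-th edge through a vertex according to the labelling, so the return value and the remaining unrevealed graph are jointly distributed in a non-trivial way. The correct handling is to note that the sum, over $r$-sets $e\ni v$, of $\mathbb{P}[e\in G\mid\mathcal{H}_j]$ equals $d(n)-\deg_{E_1^{(j)}}(v)=(1-o(1))d(n)$, and to use a switching exchanging the $i$-th neighbour of $v$ with another edge through $v$ to show that each particular $r$-set $e\ni v$ is returned with probability $\bigO(1/\tilde n^{r-1})$, making the coordinate-wise bounds above rigorous. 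Once this switching is in place, the remainder is a routine union-bound over steps, blocks, and bad events, with the three regimes $d(n)$, $\tilde n^{r-1}/d(n)$, $\tilde n^{1/2}$ arising respectively from exhausting the neighbours of a single vertex, finding one edge via vertex-set queries, and the birthday-type bound for neighbour-query edges to share a second vertex.
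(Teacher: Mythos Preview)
Your overall strategy matches the paper's: reduce to a deterministic algorithm against a uniform input from $\mathcal{F}_{n,d(n)}^{(r)}$, observe that the degree condition is automatic, and show that with $Q=o(\min\{d(n),\tilde n^{r-1}/d(n),\tilde n^{1/2}\})$ queries the revealed edge set $E_1$ a.a.s.\ stays a weak forest. The vertex-set query analysis via \cref{coro:switchprob} and the birthday bound $O(Q^2/\tilde n)$ for neighbour queries are exactly what the paper does.

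Two points where your sketch diverges from the paper are worth flagging. First, the ``union-bound over the $t=\Theta(n/\tilde n)$ blocks'' is unnecessary and would in fact cost you an extra factor of $n/\tilde n$; the paper simply observes that each individual query lives in one block and applies the per-query estimates there, union-bounding over the $Q$ queries only. Second, and more substantively, your proposed handling of neighbour queries via an ``auxiliary switching exchanging the $i$-th neighbour of $v$'' is not how the paper proceeds, and it is not clear that this switching can be made to work cleanly while conditioning on the partial labelling already revealed. The paper instead introduces an explicit randomised answering process $P$: for a neighbour query $(u,i)$, $P$ samples a graph $G_t$ uniformly from those consistent with the current history, and then samples a \emph{labelling} of the edges uniformly among those consistent with the labels already revealed. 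This decouples ``$f$ is an edge of $G_t$'' (controlled by \cref{coro:switchprob}, probability $\Theta(d(n)/\tilde n^{r-1})$) from ``$f$ has label $i$ at $u$'' (probability at most $1/(d(n)-t)$ by symmetry of the remaining labels). Combining these with $|S_v|=O(\tilde n^{r-2})$ for each previously seen vertex $v$ gives the $O(1/\tilde n)$ per-vertex bound directly, with no new switching needed. The paper then checks that the graph $G^*$ ultimately produced by $P$ is genuinely uniform over $\mathcal{F}_{n,d(n)}^{(r)}$, which is what justifies analysing $P$ instead of the original conditional distribution. This random-labelling device is the main technical idea you are missing.
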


To prove \cref{teor:secondbound2}, we will show that an algorithm that performs only $o(\min\{d(n),\tilde{n}^{r-1}/d(n),\tilde{n}^{1/2}\})$ queries will usually not succeed with the desired probability.
For this, we consider a suitable randomised process $P$ that answers the queries of the algorithm.

\begin{proof}
Suppose $Q=o(\min\{d(n),\tilde{n}^{r-1}/d(n),\tilde{n}^{1/2}\})$.
Let ALG be a (possibly adaptive and randomised) algorithm that performs $Q$ queries and searches for some history of the input $G\in\mathcal{F}_{n,d(n)}^{(r)}$ which is not simple.
Since we have, for any history $(E_1,E_2,\mathcal{D})$ seen by any algorithm on any $G\in\mathcal{F}_{n,d(n)}^{(r)}$, that any pair $(v,d_v)\in\mathcal{D}$ satisfies $d_v=d(n)$, the only condition for $(E_1,E_2,\mathcal{D})$ being simple is that $E_1$ forms a weak forest.
Therefore, ALG tries to find a set $E_1\subseteq E(G)$ which forms an $r$-graph which is not a weak forest.

The queries performed by ALG are answered by a randomised process $P$.
We denote the queries asked by ALG as $q_1,q_2,\ldots$, and the answers given by $P$ as $a_1,a_2,\ldots$.
After $t$ queries, we refer to all the previous queries from ALG and all the answers provided by $P$ as the \emph{query-answer history}.
The process $P$ uses the query-answer history to build what we call the \emph{history book}, defined for each $t\geq0$ and denoted by $H^t=(V^t,E_*^t,\bar{E}^t)$, where $V^t\subseteq V$, $\bar{E}^t\subseteq\binom{V}{r}$ and $E_*^t$ is a set of labelled $r$-sets in $\binom{V}{r}$ such that each $r$-set $e\in E_*^t$ has $r$ labels $i_1,\ldots,i_r$, one for each vertex in $e$.
We denote by $E^t$ the set of edges consisting of the $r$-sets in $E^t_*$.
Given an edge $e=\{v_1,\ldots,v_r\}\in E^t$, its labels in $E^t_*$ indicate, for each vertex $v_j\in e$, that $e$ is the $i_j$-th edge in the incidence list of $v_j$.

Initially, $V^0$, $E_*^0$ and $\bar{E}^0$ are set to be empty.
Note that we may always assume that in the $t$-th step ALG never asks a query whose answer can be deduced from the history book $H^{t-1}$.
Given two $r$-graphs $H$ and $H'$, define $\mathcal{F}_{n,d(n),H,H'}^{(r)}\coloneqq\{G\in\mathcal{F}_{n,d(n)}^{(r)}:H\subseteq G, H'\subseteq\overline{G}\}$.
We abuse notation to write $\mathcal{F}_{n,d(n),H,H'}^{(r)}$ as the event that $G\in\mathcal{F}_{n,d(n),H,H'}^{(r)}$.
The process $P$ answers ALG's queries and builds the history book as follows.

If $q_t=\{v_1,\ldots,v_r\}$ is a vertex-set query, then $P$ answers ``yes'' with probability $\mathbb{P}[q_t\in G\mid\mathcal{F}_{n,d(n),E^{t-1},\bar{E}^{t-1}}^{(r)}]$, and ``no'' otherwise.
If the answer is ``yes'', then the history book is updated by setting $V^{t}\coloneqq V^{t-1}\cup q_t$, $\bar{E}^{t}\coloneqq\bar{E}^{t-1}$ and adding $q_t$ together with its labels $j_1,\ldots,j_r$ to $E_*^{t-1}$ to obtain $E_*^{t}$, where the labels $j_1,\ldots,j_r$ are chosen uniformly at random among all possible labellings which are consistent with the labels in $E_*^{t-1}$.
In this case, the labels are also given to ALG as part of the answer.
Otherwise, the history book is updated by setting $V^{t}\coloneqq V^{t-1}\cup q_t$, $E_*^{t}\coloneqq E_*^{t-1}$ and $\bar{E}^{t}\coloneqq\bar{E}^{t-1}\cup\{q_t\}$.

If $q_t=(u,i)$ is a neighbour query, $P$ replies with $a_t\coloneqq(v_1,\ldots,v_{r-1},j_1,\ldots,j_{r-1})$, where $a_t$ is chosen such that $e\coloneqq\{u,v_1,\ldots,v_{r-1}\}$ is an edge and for each $k\in[r-1]$, the number $j_k$ is the position of $e$ in the incidence list of $v_k$ (we may assume that, as the $r$-graphs are $d(n)$-regular, the algorithm never queries $i>d(n)$).
To determine its answer $a_t$, the process $P$ will first choose an $r$-graph $G_t\in\mathcal{F}_{n,d(n),E^{t-1},\bar{E}^{t-1}}^{(r)}$ uniformly at random, and then choose a labelling of the edges of $G_t$ which is consistent with $H^{t-1}$ uniformly at random.
The edge $e=\{u,v_1,\ldots,v_{r-1}\}$ will be the $i$-th edge at $u$ in $G_t$ (in the chosen labelling) and $j_s$ will be the label of $e$ in the incidence list of $v_s$ (for each $s\in[r-1]$).
Note that the random labelling ensures that, given $G_t$, $e$ is chosen uniformly at random from a set of edges of size at least $d(n)-t$ (namely from the set of those edges of $G_t$ incident to $u$ which have no label at $u$ in $H^{t-1}$).
This in turn means that for all $f\in G_t$ with $u\in f$, the probability that the label of $u$ in $f$ is $i$ is at most $1/(d(n)-t)$.
The history book is updated by setting $V^{t}\coloneqq V^{t-1}\cup e$, $\bar{E}^{t}\coloneqq\bar{E}^{t-1}$ and adding $e$ together with the labels $i,j_1,\ldots,j_{r-1}$ to $E_*^{t-1}$ to obtain $E_*^{t}$.

Once $P$ has answered all $Q$ queries, it chooses an $r$-graph $G^*\in\mathcal{F}_{n,d(n),E^Q,\bar{E}^Q}^{(r)}$ uniformly at random.
Note that $P$ gives extra information to the algorithm in the form of labels that have not been queried.
This extra information can only benefit the algorithm, so any lower bound on the query complexity in this setting will also be a lower bound in the general setting\COMMENT{We give this extra information so that the algorithm never queries the same edge twice, which may happen with different types of queries otherwise.}.

We claim that $G^*$ is chosen uniformly at random in $\mathcal{F}_{n,d(n)}^{(r)}$.
Indeed, let $s_0\coloneqq|\mathcal{F}_{n,d(n)}^{(r)}|$.
Given a query-answer history $\mathcal{H}=(q_1,a_1,\dots,q_Q,a_Q)$, for each $t\in [Q]\cup \{0\}$, write $\mathcal{F}^t(\mathcal{H})$ for the set of all those graphs $G\in \mathcal{F}_{n,d(n)}^{(r)}$ which are ``consistent'' with $\mathcal{H}$ for at least the first $t$ steps, i.e.~all $G\in \mathcal{F}_{n,d(n),E^t,\bar{E}^t}^{(r)}$, where $(V^t, E^t_*,\bar{E}^t)$ is the history book associated with the first $t$ steps of~$\mathcal{H}$.
Thus $\mathcal{F}^t$ is a random variable and $\mathcal{F}^0(\mathcal{H})=\mathcal{F}_{n,d(n)}^{(r)}$ for each $\mathcal{H}$.
Now consider any sequence $\mathcal{S}=(s_1,\ldots,s_Q)$ such that $s_t\in\mathbb{N}$ and $\mathbb{P}[|\mathcal{F}^t|=s_t]>0$ for all $t\in[Q]$.
Write $\mathbb{P}_{\mathcal{S}}$ for the probability space consisting of all those query-answer histories $\mathcal{H}=(q_1,a_1,\dots,q_Q,a_Q)$ which satisfy $|\mathcal{F}^t|=s_t$ for all $t\in[Q]$.
Take any fixed $r$-graph $G\in\mathcal{F}_{n,d(n)}^{(r)}$.
Note that our choice of the $t$-th answer $a_t$ given by $P$ implies that $\mathbb{P}_{\mathcal{S}}\left[G\in\mathcal{F}^t\mid G\in \mathcal{F}^{t-1}\right]=s_t/s_{t-1}$ for all $t\in [Q]$.%
\COMMENT{Indeed, consider any query-answer history 
$\mathcal{H}^{t-1}=(q_1,a_1,\dots,q_{t-1},a_{t-1})$ up to step $t-1$. Below, we are working in the probability space  $\mathbb{P}_{\mathcal{S}}$, so we only consider $\mathcal{H}^{t-1}$ and $\mathcal{H}^{t}$ with  $|\mathcal{F}^{t-1}(\mathcal{H}^{t-1})|=s_t$ and  $|\mathcal{F}^t(\mathcal{H}^{t})|=s_t$, respectively. Let 
$(V^{t-1}, E^{t-1}_*,\bar{E}^{t-1})$ be the history book associated with $\mathcal{H}^{t-1}$. Suppose that $G$ is consistent with $\mathcal{H}^{t-1}$, i.e.~$G\in \mathcal{F}_{n,d(n),E^{t-1},\bar{E}^{t-1}}^{(r)}$.\\
Let us first consider the case when $q_t$ is a vertex-set query.
Then $a_t$ is ``yes'' with probability $|\mathcal{F}_{n,d(n),E^{t-1}\cup q_t,\bar{E}^{t-1}}^{(r)}|/s_{t-1}$ and ``no'' with probability $1-|\mathcal{F}_{n,d(n),E^{t-1}\cup q_t,\bar{E}^{t-1}}^{(r)}|/s_{t-1}=|\mathcal{F}_{n,d(n),E^{t-1},\bar{E}^{t-1}\cup q_t}^{(r)}|/s_{t-1}$. Note that if $a_t$ is ``yes'' then $|\mathcal{F}_{n,d(n),E^{t-1}\cup q_t,\bar{E}^{t-1}}^{(r)}|=s_t$, while if $a_t$ is ``no'' then $|\mathcal{F}_{n,d(n),E^{t-1},\bar{E}^{t-1}\cup q_t}^{(r)}|=s_t$. Now, if $q_t\in G$, then
$G$ is consistent with $\mathcal{H}^t=(q_1,\dots,q_{t},a_1,\dots,a_{t})$ iff $a_t$ is ``yes'', which happens with probability $s_t/s_{t-1}$, while if  $q_t\notin G$, then $G$ is consistent with $\mathcal{H}^t$ iff $a_t$ is ``no'', which also happens with probability $s_t/s_{t-1}$.\\
Now suppose that $q_t=(u,i)$ is a neighbour query. Note that each $G'$ which is consistent with $\mathcal{H}^{t-1}$ has the same number of ``free'' (i.e.~yet unlabelled/unseen) edges at $u$. Let $N$ be this number. Recall that the answer $a_t$ given by $P$ is some edge $e$ at $u$, where $e$ is chosen by first choosing a graph $G_t$ which is consistent with $\mathcal{H}^{t-1}$ uniformly at random, and then choosing one of the $N$ ``free'' edges at $u$ uniformly at random. Now, $G$ is consistent with $\mathcal{H}^t$ iff $e$ is one of the free edges in $G$ at $u$. Let $e_1,\dots,e_N$ be these free edges. For each $j$ we have $e_j=e$ iff $e_j\in G_t$ and $e_j$ is chosen to be the $i$-th edge at $u$ (the latter happens with probability $1/N$). Also
 $\mathbb{P}_{\mathcal{S}}[e_j\in G_t]=|\mathcal{F}_{n,d(n),E^{t-1}\cup e_j,\bar{E}^{t-1}}^{(r)}| /s_{t-1}$, but if $e=e_j$, then
$|\mathcal{F}_{n,d(n),E^{t-1}\cup e_j,\bar{E}^{t-1}}^{(r)}|=s_t$. 
Thus  $\mathbb{P}_{\mathcal{S}}[e_j=e]=s_t / (Ns_{t-1})$. As this holds for each $j$, and the events $e_j=e$ are disjoint, it follows that $\mathbb{P}_{\mathcal{S}}[e\in G]=s_t / s_{t-1}$.
}
Thus,
\begin{align*}
\mathbb{P}_{\mathcal{S}}[G^*=G]=&\,\mathbb{P}_{\mathcal{S}}\left[G\in\mathcal{F}^Q\right]/s_Q
=\,\frac{1}{s_Q}\left(\prod_{t=1}^{Q}\mathbb{P}_{\mathcal{S}}\left[G\in\mathcal{F}^t\mid G\in \mathcal{F}^{t-1}\right]\right)\mathbb{P}_{\mathcal{S}}\left[G\in\mathcal{F}^0\right]\\
=&\,\frac{1}{s_Q}\prod_{t=1}^{Q}\frac{s_t}{s_{t-1}}=\frac{1}{s_0}.
\end{align*}
Thus $\mathbb{P}[G^*=G]=1/|\mathcal{F}_{n,d(n)}^{(r)}|$ by the law of total probability.

Now let us prove that ALG will a.a.s.~only see a simple history $(E_1,E_2,\mathcal{D})$.
Note that $E_1=E^Q$ and $E_2=\bar{E}^Q$.
Hence it suffices to show that $E^Q$ is a weak forest a.a.s.
Recall that we can write each $G\in\mathcal{F}_{n,d(n)}^{(r)}$ as the disjoint union of $G_1,\ldots,G_s$, where $s=n/\tilde n$, each $G_j$ is uniformly distributed in $\mathcal{G}_{\tilde n,d(n)}^{(r)}$ and $G_j$ has vertex set $V_j$.

Assume $q_t$ is a vertex-set query.
The probability that $P$ answers ``yes'' is given by \cref{coro:switchprob} as $\bigO(d(n)/\tilde{n}^{r-1})$, as long as $t=o(d(n))$\COMMENT{If $t=o(d(n))$, the number of edges in $E$, $\bar{E}$ is at most $o(d(n))$, hence the conditions for the statement of the corollary hold. Note that we are applying \cref{coro:switchprob} on some of the $G_j$ (we can do this because $d=o(\tilde n^{r-1})$ by \cref{lema:cosanueva2} and since $n_*\leq\tilde n\leq2n_*$); the reason why we have $\bigO$ and not $\Theta$ is that there is the possibility that vertices in the query lie in distinct $V_j$, in which case the answer is always ``no''.}.
Thus, because the number of queries is $Q=o(\tilde{n}^{r-1}/d(n))$, then, by a union bound, the probability that any edge is found with vertex-set queries is $o(1)$.

Assume now that $q_t=(u,i)$ is a neighbour query, where $u\in V_j$, $j\in[s]$.
We will bound the probability that some vertex returned by $P$ in the $t$-th answer $a_t$ lies in $V^{t-1}$\COMMENT{If there is no such vertex, no loose cycle or two edges sharing two vertices are found, so $E^t$ will still be a weak forest.}.
Note that such a vertex will always lie in $V_j$.
To bound this probability, for any given vertex $v\in V^{t-1}\cap V_j$, let $S_v\coloneqq\{f\in\binom{V_j}{r}:u,v\in f, f\notin E^{t-1}\cup \bar{E}^{t-1}\}$.
Note that $|S_v|=\bigO(\tilde{n}^{r-2})$.
By \cref{coro:switchprob}, the probability that a given $r$-set in $S_v$ is an edge of $G_t$ is $\Theta(d(n)/\tilde{n}^{r-1})$.
Furthermore, if we condition on $e\in S_v$ being an edge of $G_t$, recall that the probability that its label belonging to $u$ equals $i$ is at most $1/(d(n)-t)$.
Thus, by a union bound over all elements of $S_v$, the probability that some $r$-set in $S_v$ is the $i$-th edge in the incidence list of $u$ is $\bigO(d(n)/(\tilde{n}(d(n)-t)))=\bigO(1/\tilde n)$.
Note that $|V^{t-1}|\leq rt=\bigO(Q)$.
Thus, by a union bound over all $v\in V^{t-1}$, the probability that the answer to the $t$-th query results in $E^{t}$ being not a weak forest is $\bigO(Q/\tilde{n})$.
By a union bound over all queries, the probability that any of the at most $Q$ neighbour queries finds any vertex in the current history book is $\bigO\left(Q^2/\tilde{n}\right)=o(1)$.
This in turn implies that the probability that a neighbour query detects anything else than a weak forest is $o(1)$.

Combining the conditions and lower bounds for both types of queries, we have that the probability (taken over all queries of ALG and choices of $P$ in the above process) that $E^Q$ is not a weak forest is $o(1)$.
The statement follows since we have shown that the $r$-graph $G^*$ returned by $P$ is chosen uniformly at random from $\mathcal{F}_{n,d(n)}^{(r)}$\COMMENT{Indeed, suppose the proportion of inputs where ALG succeeds is at least $1/3$ (here ``succeeds'' means that ALG sees a history that is not simple with probability at least $1/3$). This is a contradiction.}.
\end{proof}

\begin{theorem}\label{teor:secondbound}
The following statements hold:
\begin{enumerate}[label=(\roman*)]
\item Let\/ $F$ be a connected graph which is not a tree.
Assume that\/ $d(n)=\omega(1)$ and\/ $d(n)=o(n)$.
Assume, furthermore, that\/ $nd(n)/2\leq\mathit{ex}(n,F)$\COMMENT{If $nd(n)/2>(1+o(1))\mathit{ex}(n,F)$, the algorithm always says the graph is far from $F$-free. This is a one-sided error tester with constant query complexity (in general, the algorithm may do this if the number of edges is part of the input and is above the extremal number).}.
Then, any one-sided error\/ $F$-freeness tester must perform\/ $\Omega(\min\{d(n),\tilde{n}/d(n),\tilde{n}^{1/2}\})$ queries when restricted to\/ $n$-vertex inputs of average degree\/ $d(n)-o(d(n))$\COMMENT{We don't want $d(n)\pm o(d(n))$ here since we need $|G|\leq\mathit{ex}(n,F)$ for our input graphs $G$.}, where\/ $\tilde n$ is as defined in \eqref{equa:ntildedef}.\label{teor:finalthm1}
\item Let\/ $r\geq3$.
Let\/ $F$ be a connected non-$r$-partite\/ $r$-graph.
Assume that\/ $d(n)=\omega(1)$ and\/ $d(n)=o(n^{r-1})$.
Then, any one-sided error\/ $F$-freeness tester in\/ $r$-graphs must perform\/ $\Omega(\min\{d(n),\tilde{n}^{r-1}/d(n),\tilde{n}^{1/2}\})$ queries when restricted to\/ $n$-vertex inputs of average degree\/ $d(n)\pm o(d(n))$, where\/ $\tilde n$ is as defined in \eqref{equa:ntildedef}.\label{teor:finalthm2}
\end{enumerate}
\end{theorem}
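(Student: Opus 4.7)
The plan is to reduce both parts directly to Lemma~\ref{teor:secondbound2} by exploiting the defining property of a one-sided error tester: it rejects only when it has uncovered an obstruction to $F$-freeness, which for this property must be a literal copy of $F$ among the revealed edges $E_1$.

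First, I would check that $F$ is not a weak forest in either case. For (i) this is immediate since a connected graph that is not a tree contains a cycle. For (ii), I would verify by induction on the number of edges that every weak forest is $r$-partite: each new edge added to a weak tree introduces $r-1$ new vertices which can be assigned the $r-1$ colours not used at the (unique) shared vertex. Hence any non-$r$-partite $F$ is not a weak forest.

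Next, fix a one-sided error tester ALG and set the distance parameter to the constant $\varepsilon>0$ supplied by the remark after Lemma~\ref{lema:distrib}. Supply ALG with $G$ drawn uniformly from $\mathcal{F}_{n,d(n)}^{(r)}$; each such $G$ is exactly $d(n)$-regular and so meets the average-degree hypothesis of either part (and, for (i), the assumption $nd(n)/2\leq\mathit{ex}(n,F)$ rules out the trivial ``reject everything'' tester, since $F$-free graphs of the required density exist). By Lemma~\ref{lema:distrib} and the remark following it, a $1-o(1)$ fraction of such inputs is $\varepsilon$-far from being $F$-free, so ALG must reject each of them with probability at least $2/3$. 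Any rejection by a one-sided tester must be certified by a copy of $F$ within the revealed edges $E_1$ (for no simple history can preclude $F$-freeness, as such a history is sparse enough to be extended to an $F$-free graph of the prescribed density, using that $nd(n)/r$ is below the relevant Tur\'an threshold in both settings); since $F$ is not a weak forest, any such copy forces $E_1$ itself not to be a weak forest, so the history observed by ALG is not simple.

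Putting the estimates together, ALG produces a non-simple history with probability at least $1/3$ on at least a $1/3$ fraction of inputs from $\mathcal{F}_{n,d(n)}^{(r)}$. Lemma~\ref{teor:secondbound2} then delivers the query lower bound $\Omega(\min\{d(n),\tilde n^{r-1}/d(n),\tilde n^{1/2}\})$, which specialises to the bound claimed in (i) when $r=2$ and matches the bound claimed in (ii) verbatim. I expect the only real subtlety to be the one-sided-error step linking rejection to an actual copy of $F$ inside $E_1$ rather than to some subtler obstruction involving the revealed non-edges $E_2$ or the degree information $\mathcal{D}$; but this is a standard property-testing reduction and is essentially pre-packaged in the simplicity definition used by Lemma~\ref{teor:secondbound2}.
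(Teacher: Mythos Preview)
Your approach matches the paper's: reduce to Lemma~\ref{teor:secondbound2} by arguing that a one-sided tester which rejects must have seen a non-simple history, because any simple history is consistent with some $F$-free input of the required density. The paper carries out exactly this probabilistic contradiction (phrasing the final step as a $99/100$--$1/100$ mixture of $\mathcal{F}_{n,d(n)}^{(r)}$ with an auxiliary $F$-free family, where your direct argument would also work).

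The one place you undersell the work is the witness step you call ``standard'' and ``essentially pre-packaged''. It is not pre-packaged: given a simple history $(E_1,E_2,\mathcal D)$ with every recorded degree equal to $d(n)$, one must actually exhibit an $F$-free $n$-vertex $r$-graph of average degree in the stated range that contains $E_1$, avoids $E_2$, and has degree exactly $d(n)$ at each vertex appearing in $\mathcal D$. This is where the hypotheses of the two parts are genuinely used and why they differ. For~\ref{teor:finalthm2} the paper exploits that a weak forest is $r$-partite (your inductive remark) to embed $E_1$ into an $r$-partite gadget $K$ in which every vertex of $\bigcup_{e\in E_1\cup E_2}e$ is padded to degree exactly $d(n)$ via edges to $r$ fresh clouds of size $d(n)^{1/(r-1)}$; since $F$ is non-$r$-partite, $K$ is $F$-free, and one completes with an $F$-free $r$-graph on the remaining $n-o(n)$ vertices using $\mathit{ex}(n,F)=\Theta(n^r)$. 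For~\ref{teor:finalthm1} this construction fails when $F$ is bipartite, so instead the paper hangs $d(n)-\deg_{E_1}(v)$ pendant leaves on each $v$ (yielding a gadget which is trivially $F$-free, since $F$ is not a tree) and completes with an $F$-free graph on the rest using the assumption $nd(n)/2\le \mathit{ex}(n,F)$; this is also why part~\ref{teor:finalthm1} only promises average degree $d(n)-o(d(n))$. None of this is deep, but it is the bulk of the paper's proof, and your sentence ``using that $nd(n)/r$ is below the relevant Tur\'an threshold'' does not by itself produce a graph meeting the degree and non-edge constraints simultaneously.
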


\begin{proof}
We first prove \ref{teor:finalthm2} and later discuss which modifications are needed to prove~\ref{teor:finalthm1}.
Let $Q=o(\min\{d(n),\tilde{n}^{r-1}/d(n),\tilde{n}^{1/2}\})$.
Consider any algorithm ALG that performs $Q$ queries given an input $r$-graph $G$ on $n$ vertices with average degree $d(n)\pm o(d(n))$.
Assume that ALG is given an $r$-graph $G\in\mathcal{F}_{n,d(n)}^{(r)}$ as an input.
By \cref{teor:secondbound2} we know that any algorithm that performs at most $Q$ queries will see a simple history $(E_1,E_2,\mathcal{D})$ of $G$ with probability at least $2/3$ for at least $2/3$ of the graphs $G\in\mathcal{F}_{n,d(n)}^{(r)}$\COMMENT{Recall that the condition on the degrees holds trivially.}.
Note that any such simple history $(E_1,E_2,\mathcal{D})$ is such that $|E_1\cup E_2|\leq Q$\COMMENT{as the algorithm performs $Q$ queries}, $|\mathcal{D}|\leq Q$\COMMENT{In fact, $\leq rQ/(r+1)$ if the algorithm receives extra information as indices, and $\leq Q/2$ otherwise. In the former case, the best case scenario is given by performing one vertex-set query and then determining each degree of the $r$ verties with one neighbour query each.} and for every $(v,d_v)\in\mathcal{D}$, $d_v=d(n)$\COMMENT{As the $r$-graphs we are considering here are $d(n)$-regular}.
We now show that there is a family $\mathcal{F}_2$ of $F$-free $r$-graphs that, for every such simple history, contains at least one $r$-graph for which ALG will see the same history with positive probability.
\begin{itemize}
\item For each simple history $(E_1,E_2,\mathcal{D})$, let $H$ be the $r$-graph that has vertex set $\bigcup_{e\in E_1\cup E_2}e$ and edge set $E_1$.
Note that $H$ is a weak forest with (possibly) some isolated vertices and $|V(H)|\leq rQ$.
Consider a partition of $V(H)$ into $V_1,\ldots,V_r$ such that for every $e\in E(H)$ and $i\in[r]$, we have $|e\cap V_i|=1$, which can be constructed inductively by adding the edges of $H$ one by one and distributing their vertices into different parts.
Consider pairwise disjoint sets of vertices $W_1,\ldots,W_r$ of size $d(n)^{1/(r-1)}$ which are disjoint from $V(H)$.
\item Define an $r$-graph $K$ with vertex set $V(H)\cup W_1\cup\ldots\cup W_r$.
Note that for each $v\in V_i$ there are $d(n)$ $r$-sets $f$ such that $v\in f$ and $|f\cap W_j|=1$ for all $j\in[r]\setminus\{i\}$.
Define $E(K)$ by including $E(H)$ and adding $d(n)-\deg_{H}(v)$ of these $r$-sets incident to each vertex $v\in V(H)$.
Note that $K$ is $r$-partite and, thus, $F$-free.
\item Finally, for any such $K$, consider the $r$-graph $G$ obtained as the vertex-disjoint union of $K$ and any $F$-free $r$-graph on $n-|V(K)|$ vertices with average degree $d(n)-o(d(n))$ (to see that this is possible, note that $|V(K)|=o(n)$\COMMENT{Note that $|V(H)|=\bigO(\tilde n^{1/2})=o(n)$ and $|V(K)|=|V(H)|+rd(n)^{1/(r-1)}=o(n)$ as $d(n)=o(n^{r-1})$.} and $\mathit{ex}(n,F)=\Theta(n^r)$ since $F$ is non-$r$-partite).
\end{itemize}
We define $\mathcal{F}_2$ as the family that consists of all $r$-graphs $G$ that can be constructed as above and all possible relabellings of their vertices.
Note that each $G\in\mathcal{F}_2$ has $n$ vertices, average degree $d(n)\pm o(d(n))$\COMMENT{Since $\deg_K(v)=d(n)$ for all $v\in V(H)$ and every edge of $K$ has at least one vertex in $H$, the number of edges in $K$ is upper bounded by $d(n)|V(H)|=o(nd(n))$. The number of edges in the rest of the $r$-graph is $(n-o(n))(d(n)\pm o(d(n)))/r=(1\pm o(1))nd(n)/r$, hence $|G|=(1\pm o(1))nd(n)/r$.} and is $F$-free.
Moreover, for every $G\in\mathcal{F}_{n,d(n)}^{(r)}$ and any simple history $(E_1,E_2,\mathcal{D})$ seen by ALG on $G$, there is some $r$-graph $G\in\mathcal{F}_2$ such that ALG would have seen $(E_1,E_2,\mathcal{D})$ on $G$\COMMENT{Here we mean that the same run of ALG, with the same coin flips, would have seen the same history.}.

Now suppose ALG is a one-sided error $F$-freeness tester for $r$-graphs of average degree $d(n)\pm o(d(n))$ that performs $Q$ queries.
Assume that ALG is given inputs as follows.
With probability $99/100$, the input is an $r$-graph $G\in\mathcal{F}_{n,d}^{(r)}$ chosen uniformly at random.
With probability $1/100$, the input is an $r$-graph $G\in\mathcal{F}_2$ chosen uniformly at random.
By \cref{teor:secondbound2}, the proportion of $r$-graphs $G\in\mathcal{F}_{n,d(n)}^{(r)}$ for which with probability at least $2/3$ ALG only sees a simple history is least $2/3$.
Moreover, since ALG is a one-sided error tester, it can only reject an input $G$ if ALG can guarantee the existence of a copy of $F$ in $G$.
Thus, if after $Q$ queries ALG has seen a simple history $(E_1,E_2,\mathcal{D})$, then it cannot reject the input, as there are $r$-graphs $G\in\mathcal{F}_2$ which are $F$-free and for which ALG may see the same history with positive probability.
So given a random input as described above, the probability that ALG accepts is at least $(99/100)(2/3)^2>2/5$.

On the other hand, by \cref{lema:distrib}, the proportion of $r$-graphs in $\mathcal{F}_{n,d(n)}^{(r)}$ that are $\varepsilon$-far from being $F$-free is at least $99/100$.
Since ALG is a one-sided error $F$-freeness tester, it must reject these inputs with probability at least $2/3$.
Therefore, given a random input $G$, the probability that ALG rejects $G$ must be at least $(99/100)^2(2/3)>3/5$.
This is a contradiction to the previous statement, so ALG cannot be a one-sided error $F$-freeness tester.

In order to prove \ref{teor:finalthm1}, let $Q=o(\min\{d(n),\tilde{n}/d(n),\tilde{n}^{1/2}\})$.
If $F$ is not bipartite, then \ref{teor:finalthm2} already shows the desired statement.
In order to deal with bipartite graphs $F$, define a new family $\mathcal{F}_1$ (which also works for non-bipartite $F$) as follows.
Given a simple history $(E_1,E_2,\mathcal{D})$, define $H$ as above.
For each $v\in V(H)$, consider $d(n)-\deg_H(v)$ new vertices and add an edge between $v$ and each of them.
Denote the resulting graph by $K$.
Finally, consider the graph $G$ obtained as the disjoint union of $K$ and any $F$-free graph on $n-|V(K)|$ vertices with average degree $d(n)-o(d(n))$\COMMENT{Note that the number of new vertices we attach to $H$ is $\leq dQ=o(\tilde n)=o(n)$ as $Q=o(\tilde n/d(n))$. The average degree in $K$ is clearly upper bounded by $d$. Note that an $F$-free graph with the desired properties exists, as we are assuming that $nd/2\leq\mathit{ex}(n,F)$. The average degree of $G$ is also $d(n)-o(d(n))$.}.
We define $\mathcal{F}_1$ as the family that consists of all graphs $G$ that can be constructed as above and all possible relabellings of their vertices.
The remainder of the proof works in the same way.
\end{proof}

Note that if, for instance, $d(n)=2\mathit{ex}(n,F)/n$ and $F=C_4$, then \cref{teor:secondbound}\ref{teor:finalthm1} (together with \cref{coro:expectation}\ref{item3}) implies a lower bound of $\Omega(n^{1/2})$\COMMENT{We have that $\Phi_{C_4,n_0,d}=\Theta(\min\{n_0d,d^4\})$, so $\tilde n=n$ whenever $d(n)=\Omega(n^{1/3})$ and $\tilde n=d(n)^3$ otherwise. As we know $\mathit{ex}(n,F)=\Theta(n^{3/2})$, this gives $\tilde n=n$ and so a lower bound of $\Omega(n^{1/2})$.}\COMMENT{In general, \cref{teor:secondbound}\ref{teor:finalthm1} gives rise to an extremely sharp threshold in the behaviour of $F$-freeness testers if $F$ is a bipartite graph.
If the input graph is such that the number of edges it has is larger than the Tur\'an number of $F$, then the algorithm may output that the $r$-graph contains copies of $F$ without performing any queries.
However, if the number of edges is at most $\mathit{ex}(n,F)$, the algorithm must perform a polynomial number of queries in order to find any copy of $F$, as shown in \cref{teor:secondbound2}.}.
The bound on the number of queries in \cref{teor:secondbound} is stronger than in \cref{teor:bound2} as long as $d$ is not too small.


\subsection{Upper bounds}\label{section43}

Here, we present several upper bounds on the query complexity for testing $F$-freeness. 
All the testers we present here are one-sided error testers.
Note that there is always the trivial bound of $\bigO(nd)$ queries; the forthcoming results are only relevant whenever the presented bound is smaller than this.
\Cref{teor:upbound1} provides a bound on the query complexity which applies to input $r$-graphs $G$ in which the maximum degree does not differ too much from the average degree.
\Cref{teor:upbound15} improves \cref{teor:upbound1} for special $r$-graphs $F$.
Finally, \cref{teor:upbound3} provides a bound which works for arbitrary $F$ and $G$.
\Cref{teor:upbound1,teor:upbound15} give stronger bounds for very sparse $r$-graphs $G$, whereas \cref{teor:upbound3} gives stronger bounds for denser $r$-graphs.

We will say that a tester for a property $\mathcal{P}$ is an $\varepsilon'$-tester if it is a valid tester for $\mathcal{P}$ for all distance parameters $\varepsilon\geq\varepsilon'$ (recall that $\varepsilon$ stands for the proportion of edges of a graph $G$ that needs to be modified to satisfy a given property $\mathcal{P}$ in order for $G$ to be considered far from $\mathcal{P}$).
The techniques of our algorithms are based on two strategies: random sampling and local exploration.
We will always write $V$ for the vertex set of the input $r$-graph $G$ and $d$ for its average degree.
Given any $S\subseteq V$, we denote by $G[S]\coloneqq\{e\in G:e\subseteq S\}$ the subgraph of $G$ spanned by $S$.
Thus $V(G[S])=S$.
We denote by $G\{S,\rho\}\coloneqq\{e\in G:\exists\ v\in e:\dist(S,v)<\rho\}$ the graph obtained from $G$ by performing a breadth-first search of depth $\rho$ from $S$.
Throughout this section, the hidden constants in the $\bigO$ notation will be independent of both $\varepsilon$ and $n$.
When the constants depend on $\varepsilon$, we will denote this by writing~$\bigO_\varepsilon$.

\begin{proposition}\label{teor:upbound1}
For every\/ $\varepsilon>0$, the following holds.
Let\/ $F$ be a fixed, connected\/ $r$-graph and let\/ $D$ be its diameter.
For the class consisting of all input\/ $r$-graphs\/ $G$ on\/ $n$ vertices with average degree\/ $d$ and maximum degree\/ $\Delta(G)=\bigO(d)$, there exists an $\varepsilon$-tester for\/ $F$-freeness with\/ $\bigO_\varepsilon(d^{D+1})$ queries.\COMMENT{Here I want an upper bound for all $F$. This is only going to be good when $d$ is small enough, but still interesting.}
\end{proposition}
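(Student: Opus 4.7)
The plan is to use a simple vertex-sampling strategy combined with bounded-depth local exploration. The tester samples $t = \Theta(1/\varepsilon)$ vertices uniformly at random, and from each sampled vertex $v$ performs a neighbour-query breadth-first search that discovers the subgraph $G\{v, D+1\}$. The tester rejects if and only if it uncovers a copy of $F$ during the exploration, so one-sided soundness is immediate: any rejection is witnessed by an actual copy of $F$ in $G$, hence $F$-free inputs are always accepted.

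For completeness, the key quantitative step is: if $G$ is $\varepsilon$-far from $F$-free, then an $\Omega(\varepsilon)$-fraction of its vertices lie in some copy of $F$. Let $E_A \subseteq E(G)$ be the set of edges that belong to at least one copy of $F$ in $G$. If $|E_A| < \varepsilon m$, then removing the edges in $E_A$ produces an $F$-free subgraph using fewer than $\varepsilon m$ deletions (any leftover copy of $F$ would consist entirely of edges outside $E_A$, contradicting the definition of $E_A$); this violates $\varepsilon$-farness, so $|E_A| \ge \varepsilon m = \varepsilon n d / r$. Letting $V_A$ be the set of vertices incident to some edge of $E_A$, the double-count $r|E_A| \le \sum_{v \in V_A} \deg_G(v) \le |V_A| \Delta(G) = O(|V_A|d)$ together with $\Delta(G) = O(d)$ yields $|V_A| = \Omega(\varepsilon n)$.

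Next I would verify that a depth-$(D+1)$ BFS from any $v \in V_A$ reveals a copy of $F$. By construction, $v$ lies in some copy $F'$ of $F$ in $G$. Since $F'$ has diameter $D$, every vertex of $F'$ is at distance at most $D$ from $v$ in $F'$, and therefore at distance at most $D$ from $v$ in $G$. Hence every edge of $F'$ contains a vertex at distance at most $D$ from $v$, so $E(F') \subseteq G\{v, D+1\}$ and the exploration uncovers $F'$. Consequently, as soon as the random sample hits $V_A$, the tester rejects; picking $t = \Theta(1/\varepsilon)$ makes the hitting probability at least $1 - (1 - \Omega(\varepsilon))^t \ge 2/3$.

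For the query count, the number of vertices at distance at most $D$ from a fixed vertex in $G$ is bounded by $\sum_{i=0}^{D}(\Delta(G)(r-1))^i = O(d^D)$ (as $r$ is fixed and $\Delta(G) = O(d)$), so each BFS uses $O(d^{D+1})$ neighbour queries. Summing over the $\Theta(1/\varepsilon)$ starting vertices gives $O_\varepsilon(d^{D+1})$ queries overall, matching the claim. The main subtlety I expect is calibrating the BFS depth correctly: examples like $F = K_3$ show that depth $D$ is not sufficient (the edge opposite to the start vertex can have both of its endpoints at distance exactly $D$), so depth $D+1$ is both necessary and sufficient, which is the source of the $d^{D+1}$ rather than $d^D$ in the bound.
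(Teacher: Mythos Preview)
Your proof is correct and follows essentially the same approach as the paper: sample $\Theta(1/\varepsilon)$ vertices, run a depth-$(D+1)$ BFS from each, and reject if a copy of $F$ appears; the completeness argument via $|E_A|\ge \varepsilon nd/r$ and the degree bound $\Delta(G)=O(d)$ to get $|V_A|=\Omega(\varepsilon n)$ is exactly the paper's reasoning. Your write-up is in fact more detailed than the paper's (which omits the explicit diameter verification and query-count calculation), and your closing remark about $K_3$ correctly anticipates why the paper's next proposition refines the bound to $O_\varepsilon(d^D)$ for a restricted class of $F$ by starting the BFS from an edge rather than a vertex.
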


\begin{proof}
We consider a one-sided error $F$-freeness $\varepsilon$-tester.
The procedure is as follows.
First choose a set $S\subseteq V(G)$ of size $\Theta(1/\varepsilon)$ uniformly at random.
For each $v\in S$, find $G\{v,D+1\}$ by performing neighbour queries.
If any of the graphs $G\{v,D+1\}$ contains a copy of $F$, the algorithm rejects $G$.
Otherwise, it accepts it.
Clearly, the complexity is $\bigO(d^{D+1}/\varepsilon)$ and the procedure will always accept $G$ if it is $F$-free.

Assume now that the input is $\varepsilon$-far from being $F$-free.
Then, it contains at least $\varepsilon nd/r$ edges that belong to copies of $F$.
It follows that the number of vertices that belong to some copy of $F$ is $\Omega(\varepsilon nd/\Delta(G))=\Omega(\varepsilon n)$\COMMENT{because the maximum degree of a vertex is $\bigO(d)$}.
Therefore, if the implicit constant in the bound on $|S|$ is large enough, the algorithm will choose one of the vertices that belong to a copy of $F$ with probability at least $2/3$.
If it chooses such a vertex, then, as $F$ has diameter $D$\COMMENT{any copy of $F$ that the vertex belongs to will be contained in $G\{v,D+1\}$.
Once the algorithm finds the copy of $F$, it rejects the input.
As the probability that we find a vertex that belongs to a copy of $F$ is at least $2/3$, the probability that the input is rejected is also at least $2/3$.}, it rejects the input.
\end{proof}

We can improve the bound in \cref{teor:upbound1} for a certain class of $r$-graphs $F$.
Given any $r$-graph $F$, let $D_F$ be its diameter.
Consider the partition of its vertices given by choosing an edge $e\in F$, taking $V_0(e)\coloneqq e$ and $V_i(e)\coloneqq\{u\in V(F):\mathrm{dist}(e,u)=i\}$ for $i\in[D_F]$.
We let $\mathcal{F}_E\coloneqq\{F:|F[V_{D_F}(e)]|=0\ \ \forall\ e\in F\}$.
The class $\mathcal{F}_E$ contains, for instance, complete $r$-partite $r$-graphs, loose cycles\COMMENT{We assume that $r\geq3$. Otherwise this is covered by the comment about tight cycles. Assume we perform a breath-first search. For a loose cycle, each step in the BFS reveals only one new edge at each side of the path around the first edge. If the cycle has odd length $m$, then $D_F=\lceil m/2\rceil$. If we start from an edge $e$, after $D_F-1$ steps we have already covered all the edges, and thus $V_{D_F}(e)=\varnothing$. Suppose now the cycle has even length. Then $D_F=m/2+1$ and again $V_{D_F}(e)=\varnothing$.}
and tight cycles\COMMENT{A tight cycle $C$ on $n$ vertices has diameter $D=\lceil \frac{\lfloor n/2\rfloor-1}{r-1}\rceil$ (though that number is not important for the argument). Consider any vertex $x$ on $C$ and let $P$ be the loose subpath of $C$ consisting of $D-1$ edges `to the left' and $D-1$ edges `to the right' of $x$. (So $x$ is the centre of $P$.) Then $P$ leaves at least one and at most $2(r-1)$ vertices of $C$ uncovered (since $D$ is the diameter of $C$). Now consider the edge $e$ of $C$ which contains $x$ as its leftmost vertex. Let $y$ be the rightmost vertex of $e$ and consider the loose subpath $P'$ of $C$ which consists of $e$ as well as $D-1$ edges to the left of $x$ and $D-1$ edges to the right of $y$. Then $P'$ covers either all of $C$ or it covers precisely $r-1$ vertices more than $P$, i.e. $P'$ leaves at most $r-1$ vertices of $C$ uncovered. But $V_D(e)$ consists of precisely all those uncovered vertices, and so $V_D(e)$ cannot span any edges. As a tight cycle is completely symmetric, this completes the proof.}.
If $r=2$ then $\mathcal{F}_E$ also contains hypercubes, for example.

\begin{proposition}\label{teor:upbound15}
For every\/ $\varepsilon>0$, the following holds.
Let\/ $F\in\mathcal{F}_E$ be an\/ $r$-graph and let\/ $D$ be its diameter.
For the class consisting of all input\/ $r$-graphs\/ $G$ with average degree\/ $d$ and maximum degree\/ $\Delta(G)=\bigO(d)$, there exists an $\varepsilon$-tester for\/ $F$-freeness with\/ $\bigO_\varepsilon(d^{D})$ queries.
\end{proposition}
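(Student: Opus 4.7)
The plan is to modify the vertex-sampling strategy of \cref{teor:upbound1} to an edge-sampling strategy. The structural assumption $F\in\mathcal{F}_E$ is precisely what will let us work with BFS balls of depth~$D$ around an edge rather than depth~$D+1$ around a vertex, saving a factor of~$d$ in the query complexity.

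The tester I would describe is as follows. Fix a constant $c$ with $\Delta(G)\leq cd$ and sample $T=\Theta(1/\varepsilon)$ independent pairs $(v,i)$ with $v\in V$ and $i\in[cd]$ uniformly; for each pair with $i\leq\deg(v)$, use one neighbour query to read off the $i$-th edge $e$ at~$v$. A short calculation shows that each edge of $G$ is picked with the same probability $r/(ncd)$ by each pair, so successfully-read edges are uniformly distributed over $E(G)$. For every such sampled edge~$e$, perform a BFS of depth $D-1$ from the vertex set of~$e$ to discover the vertices $y$ with $\dist(e,y)\leq D-1$, and then query all edges incident to these vertices to assemble $G\{e,D\}$. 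The tester rejects iff some $G\{e,D\}$ contains a copy of~$F$. Since a rejection genuinely exhibits a copy, this is one-sided.

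To verify correctness when $G$ is $\varepsilon$-far from $F$-free, I would argue (as in the proof of \cref{teor:upbound1}) that at least $\varepsilon m$ edges lie in some copy of~$F$, where $m=nd/r$; call them \emph{good}. Then each sampled pair hits a good edge with probability at least $\varepsilon/c$, and $T=\Theta(1/\varepsilon)$ samples suffice to hit one with probability at least~$2/3$. The decisive step is to show that once a good edge $e$ is sampled, its copy is already contained in $G\{e,D\}$: if $e$ corresponds to an edge $e^{*}$ in some copy $F'\subseteq G$ of~$F$, then the hypothesis $F\in\mathcal{F}_E$ tells us $V_{D}(e^{*})$ spans no edges of~$F$, so every edge $f^{*}\in F$ must contain some vertex at distance at most $D-1$ from~$e^{*}$; its image in~$G$ therefore lies in $G\{e,D\}$, whence $F'\subseteq G\{e,D\}$.

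Finally, I would bound the query cost. Using $\Delta(G)=\bigO(d)$, the vertices within distance $D-1$ of~$e$ number $\bigO(d^{D-1})$, and enumerating all edges incident to them takes $\bigO(d^{D})$ neighbour queries; the sampling phase itself only adds $\bigO(1/\varepsilon)$ queries. Summing over the $\bigO(1/\varepsilon)$ samples gives the claimed $\bigO_\varepsilon(d^{D})$ bound. I expect the most delicate point to be the edge-sampling analysis: one must check that the pair-rejection scheme produces edge samples with constant probability of success, so that the savings from the shallower BFS are not eaten up by an inflated sampling overhead. The application of $F\in\mathcal{F}_E$, by contrast, is a clean one-line argument.
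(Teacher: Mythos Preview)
Your proposal is correct and follows essentially the same approach as the paper: sample $\Theta(1/\varepsilon)$ edges (the paper does this by picking a uniform vertex and then a uniform incident edge, you do it via uniform $(v,i)$ pairs---both give each edge with probability $\Omega(1/nd)$ once $\Delta(G)=\bigO(d)$), explore $G\{e,D\}$ by neighbour queries, and use the defining property of $\mathcal{F}_E$ to conclude that any copy of $F$ through $e$ is captured in $G\{e,D\}$. Your analysis of the edge-sampling probabilities and the $\bigO(d^{D})$ BFS cost is accurate, and the key structural observation---that every edge of $F$ must meet $V_0(e)\cup\cdots\cup V_{D-1}(e)$---is exactly the point the paper uses.
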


\begin{proof}
We consider a one-sided error $\varepsilon$-tester, which works in a very similar way as in the proof of \cref{teor:upbound1}.
The $F$-freeness tester chooses a set $S\subseteq V$ of size $\Theta(1/\varepsilon)$ uniformly at random.
It then chooses an edge $e$ incident to each $v\in S$ uniformly at random and finds $G\{e,D\}$ by performing neighbour queries; then, it searches for a copy of $F$.
If any copy of $F$ is found, the algorithm rejects the input; otherwise, it accepts.
The query complexity is clearly $\bigO(d^{D}/\varepsilon)$.
The analysis of the algorithm is similar to that of \cref{teor:upbound1}, so we omit the details.\COMMENT{If $G$ is $F$-free, then the algorithm will never find a copy of $F$ and will always accept the input.
Assume now that $G$ is $\varepsilon$-far from being $F$-free.
As $G$ is $\varepsilon$-far from being $F$-free, it contains at least $\varepsilon nd$ edges that belong to copies of $F$.
By uniformly selecting a vertex in $V$ and uniformly selecting an edge in $G$ incident to it (note that this requires $\Theta(\log d)$ queries (if the algorithm has a bound on the maximum degree), but this is an additive term and it is of lower order than the polynomial in $d$), the probability that any fixed edge is chosen is $\Omega(1/nd)$.
As $G$ contains at least $\varepsilon nd$ edges that belong to copies of $F$, by repeating this $\Theta(1/\varepsilon)$ times, the probability that we find at least one edge belonging to a copy of $F$ is at least $2/3$.
By the definition of $\mathcal{F}_E$, any copy of $F$ that the edge belongs to will be contained in $G\{e,D\}$.
One can then use any procedure on $G\{e,D\}$ to reveal said copy, and then the algorithm rejects the input.}
\end{proof}

We conclude with the following bound, which works for arbitrary $G$ and any $F$ without isolated vertices.
Given an $r$-graph $F$, we define its \emph{vertex-overlap index} $\ell(F)$ as the minimum integer $\ell$ such that two graphs isomorphic to\/ $F$ sharing\/ $\ell$ vertices must share at least one edge; if this does not hold for any $\ell\in[v_F]$, we then set $\ell=v_F+1$.
For instance, $\ell(K_{k}^{(r)})=r$, and for a matching $M$ we have $\ell(M)=|V(M)|+1$ if $|V(M)|\geq2r$.

\begin{theorem}\label{teor:upbound3}
For every\/ $\varepsilon>0$, the following holds.
Let\/ $r\geq2$ and let\/ $F$ be an\/ $r$-graph without isolated vertices.
Let\/ $\ell\coloneqq\ell(F)$.
For the class consisting of all input\/ $r$-graphs\/ $G$ on\/ $n$ vertices with average degree\/ $d$ and maximum degree\/ $\Delta$, there exists an $\varepsilon$-tester for\/ $F$-freeness with\/ $\bigO_\varepsilon(\max\{(n/(nd)^{1/v_F})^{r},(n^{\ell-2}\Delta/d)^{r/(\ell-1)}\})$ queries.
\end{theorem}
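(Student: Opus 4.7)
The plan is to analyze the natural vertex-sampling tester: pick a uniformly random subset $S\subseteq V(G)$ of size $s\coloneqq C_\varepsilon\cdot\max\{n/(nd)^{1/v_F},(n^{\ell-2}\Delta/d)^{1/(\ell-1)}\}$ for a sufficiently large constant $C_\varepsilon>0$, perform the $\binom{s}{r}$ vertex-set queries that determine $G[S]$, and reject if and only if $G[S]$ contains a copy of $F$. The query complexity is $\bigO(s^r)$, which matches the bound claimed in the theorem, and since $G[S]$ is $F$-free whenever $G$ is, the tester has one-sided error automatically.

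For the nontrivial direction, suppose $G$ is $\varepsilon$-far from $F$-free. Then every edge-cover of the copies of $F$ in $G$ has at least $\varepsilon|G|=\varepsilon nd/r$ edges, and the standard packing--cover inequality $\tau\leq e_F\nu$ (whose proof is: delete every edge of a maximum edge-disjoint packing) produces an edge-disjoint family $F_1,\dots,F_p$ of copies of $F$ with $p\geq c_\varepsilon nd$ for some $c_\varepsilon>0$. Let $X\coloneqq|\{i\in[p]:V(F_i)\subseteq S\}|$; I plan to apply the Paley--Zygmund inequality. The elementary identity $\mathbb{P}[V(F_i)\subseteq S]=\Theta((s/n)^{v_F})$ together with linearity gives $\mathbb{E}[X]=\Theta(p(s/n)^{v_F})$, so taking $s\geq n/(nd)^{1/v_F}$ with a large enough constant ensures $\mathbb{E}[X]\geq 1$.

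For the variance, I will split $\mathbb{E}[X^2]-\mathbb{E}[X]$ by the intersection size $k\coloneqq|V(F_i)\cap V(F_j)|$. The definition of $\ell=\ell(F)$ combined with edge-disjointness of the family forces $|V(F_i)\cap V(F_j)|\leq\ell-1$ for all $i\neq j$, so only $k\in\{0,\dots,\ell-1\}$ contribute. Writing $N_k$ for the number of ordered pairs with intersection exactly $k$, the $k=0$ contribution is absorbed into $\mathbb{E}[X]^2$. The critical observation is that for $1\leq k\leq\ell-1$, any $k$-subset $U\subseteq V$ lies in at most $\Delta$ of the $F_i$: fixing $v\in U$, each $F_i\ni v$ must contain at least one edge of $G$ incident to $v$ (since $F$ has no isolated vertices), these edges are distinct by edge-disjointness, and $v$ has only $\Delta$ incident edges. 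Combining $N_k\leq p\binom{v_F}{k}\Delta$ with the hypergeometric estimate $\mathbb{P}[V(F_i)\cup V(F_j)\subseteq S]=\Theta((s/n)^{2v_F-k})$ yields
\[
\frac{\mathrm{Var}(X)}{\mathbb{E}[X]^2}\leq\frac{1}{\mathbb{E}[X]}+\bigO\!\left(\frac{\Delta}{p}\left(\frac{n}{s}\right)^{\ell-1}\right),
\]
where the sum over $k$ is dominated by the $k=\ell-1$ term. Using $p\geq c_\varepsilon nd$, this right-hand side is $\bigO(1)$ precisely when $s\geq(n^{\ell-2}\Delta/d)^{1/(\ell-1)}$, and Paley--Zygmund then delivers $\mathbb{P}[X>0]=\Omega(1)$, which can be boosted to $2/3$ by enlarging $C_\varepsilon$.

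The main obstacle is the variance estimate. A naive bound on the number of copies of $F$ in $G$ through a fixed $(\ell-1)$-vertex set gives $\Delta^{v_F-\ell+1}$, which would force that exponent on $\Delta$ in the second term of the final bound. What rescues the argument is the decision to work with the edge-disjoint \emph{family} $F_1,\dots,F_p$ rather than with all copies of $F$ in $G$: inside such a family, the simple charging to distinct incident edges replaces $\Delta^{v_F-\ell+1}$ by $\Delta$, ultimately producing the exponent $r/(\ell-1)$ stated in the theorem.
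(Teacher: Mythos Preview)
Your proof is correct and follows essentially the same approach as the paper: the same vertex-sampling tester, the same reduction to an edge-disjoint family of copies via the packing--cover inequality, the same key observation that a $k$-set lies in at most $\Delta$ members of the family (using that $F$ has no isolated vertices), and the same use of $\ell(F)$ to cap the intersection size at $\ell-1$. The only cosmetic difference is that you invoke Paley--Zygmund where the paper uses Chebyshev, which is immaterial here.
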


In the case when $F=K_k^{(r)}$ and the input $r$-graph $G$ satisfies $\Delta(G)=\bigO(d)$, the bound in \cref{teor:upbound3} becomes $\bigO_\varepsilon((n/(nd)^{1/k})^r)$ whenever $d=o(n^{k/(r-1)-1})$, and $\bigO_\varepsilon(n^{r(r-2)/(r-1)})$ otherwise\COMMENT{Note that if $d=xn^{k/(r-1)-1}$ then
\[\frac{(n/(nd)^{1/k})^r}{nd}=\frac{(n/(x^{1/k}n^{1/(r-1)}))^r}{nd}=\frac{x^{-r/k}n^{r(r-2)/(r-1)}}{xn^{k/(r-1)}}=x^{-r/k-1}n^{r(r-2)/(r-1)-k/(r-1)}.\]
Note that the second bound beats $nd$ if $d$ is very large.
The first beats $nd$ if $x$ is not too small and $k>r(r-2)$ (note that this holds for graphs).}.

\begin{proof}
Choose a constant $c$ which is large enough compared to $v_F$ and $e_F$.
We present a one-sided error $\varepsilon$-tester in \cref{algo:tester5}.
In this proof, the constants in the $\bigO$ notation are independent of $c$\COMMENT{and $\varepsilon$ whenever it is not a subindex, but this is already the way we defined the notation in this section}.

\begin{algorithm}
\begin{algorithmic}[1]
\Procedure{Canonical $F$ tester}{}
\State{Let $s=c\max\{n/(\varepsilon nd)^{1/v_F},(n^{\ell-2}\Delta/\varepsilon d)^{1/(\ell-1)}\}$.}\label{algo5step1}
\State{Choose a set $S\subseteq V$ of size $s$ uniformly at random.}
\State{Find $G[S]$ by performing all vertex-set queries.}
\State{\textbf{if} $G[S]$ contains a copy of $F$, \textbf{then} reject.}\label{alg1reject}
\State{\textbf{otherwise}, accept.}
\EndProcedure
\end{algorithmic}
\caption{An $F$-freeness $\varepsilon$-tester for $r$-graphs.}\label{algo:tester5}
\end{algorithm}

It is easy to see that we may assume $s$ is large compared to $v_F$\COMMENT{Indeed, ignoring $c$, we have that $s$ is at least a constant, so by making $c$ large enough, $s$ will be large enough. Indeed, for $s$ to be at most a constant we need $\varepsilon=\Omega(n^{v_F-1}/d)$. But $\varepsilon\leq1$. So this is only achieved when $d=\Theta(n^{r-1})$ and $F$ is only one edge (as we assume that $F$ has no isolated vertices, we have $v_F\geq r$). But if $F$ is an edge and $G$ is dense there is a constant time tester.}.
If $G$ is $F$-free, the algorithm will never find a copy of $F$ and will always accept the input.
Assume now that $G$ is $\varepsilon$-far from being $F$-free.
Then, $G$ must contain a set $\mathcal{F}$ of $\varepsilon nd/e_F$ edge-disjoint copies of $F$\COMMENT{Since it is $\varepsilon$-far from $F$-free, we have to delete at least $\varepsilon nd$ edges to make it $F$-free. Consider a maximal family $\mathcal{F}$ of edge-disjoint copies of $F$; if we delete all the edges in all of these copies, then the $r$-graph becomes $F$-free. Therefore, $e_F|\mathcal{F}|\geq\varepsilon nd$.}.
For each $W\subseteq V$, we define $\deg_{\mathcal{F}}(W)\coloneqq|\{F'\in\mathcal{F}:W\subseteq V(F')\}|$.
It is clear that 
\begin{equation}\label{equa:Fdegbound}
\deg_{\mathcal{F}}(W)\leq\min_{v\in W}\deg(v)\leq\Delta.
\end{equation}

For any fixed $F'\in\mathcal{F}$, we have $\mathbb{P}[F'\in G[S]]=(1\pm1/2)(s/n)^{v_F}$\COMMENT{The probability is $\binom{n-v_F}{s-v_F}/\binom{n}{s}=(s)_{v_F}/(n)_{v_F}$.}.
We denote by $X$ the number of $F'\in\mathcal{F}$ such that $F'\in G[S]$.
We conclude that
\begin{equation}\label{equa:up3exp}
\mathbb{E}[X]=(1\pm 1/2)|\mathcal{F}|\left(\frac{s}{n}\right)^{v_F}=\Theta\left(\frac{\varepsilon ds^{v_F}}{n^{v_F-1}}\right).
\end{equation}
The variance of $X$ can be estimated by observing that we only need to consider $r$-graphs $F',F''\in\mathcal{F}$ whose vertex sets intersect, as otherwise the events are negatively correlated\COMMENT{Indeed, for each vertex-disjoint pair $F',F''\in\mathcal{F}$ we have that $\mathbb{P}[F'\in G[S]]=\mathbb{P}[F''\in G[S]]=\binom{n-v_F}{s-v_F}/\binom{n}{s}=(s)_{v_F}/(n)_{v_F}$, while $\mathbb{P}[F'\cup F''\in G[S]]=\binom{n-2v_F}{s-2v_F}/\binom{n}{s}=(s)_{2v_F}/(n)_{2v_F}<((s)_{v_F})^2/((n)_{v_F})^2=\mathbb{P}[F'\in G[S]]^2$.}.
Hence,\COMMENT{See previous note for first inequality:\begin{align*}\mathrm{Var}[X]=&\sum_{(F',F'')\in\mathcal{F}\times\mathcal{F}}\mathrm{Cov}[\mathbf{1}_{F'\subseteq G[S]},\mathbf{1}_{F''\subseteq G[S]}]=\sum_{(F',F'')\in\mathcal{F}\times\mathcal{F}}(\mathbb{E}[\mathbf{1}_{F'\subseteq G[S]}\mathbf{1}_{F''\subseteq G[S]}]-\mathbb{E}[\mathbf{1}_{F'\subseteq G[S]}]\mathbb{E}[\mathbf{1}_{F''\subseteq G[S]}])\\=&\sum_{(F',F'')\in\mathcal{F}\times\mathcal{F}}(\mathbb{P}[F'\cup F''\subseteq G[S]]-\mathbb{P}[F'\subseteq G[S]]\mathbb{P}[F''\subseteq G[S]])\\=&\sum_{\substack{(F',F'')\in\mathcal{F}\times\mathcal{F}\\V(F')\cap V(F'')=\varnothing}}(\mathbb{P}[F'\cup F''\subseteq G[S]]-\mathbb{P}[F'\subseteq G[S]]\mathbb{P}[F''\subseteq G[S]])\\+&\sum_{\substack{(F',F'')\in\mathcal{F}\times\mathcal{F}\\V(F')\cap V(F'')\neq\varnothing}}(\mathbb{P}[F'\cup F''\subseteq G[S]]-\mathbb{P}[F'\subseteq G[S]]\mathbb{P}[F''\subseteq G[S]])\\\leq&\sum_{\substack{(F',F'')\in\mathcal{F}\times\mathcal{F}\\V(F')\cap V(F'')\neq\varnothing}}(\mathbb{P}[F'\cup F''\subseteq G[S]]-\mathbb{P}[F'\subseteq G[S]]\mathbb{P}[F''\subseteq G[S]])\leq\sum_{\substack{(F',F'')\in\mathcal{F}\times\mathcal{F}\\ V(F')\cap V(F'')\neq\varnothing}}\mathbb{P}[F'\cup F''\subseteq G[S]]\end{align*}}
\begin{equation}\label{equa:up3var}
\mathrm{Var}[X]\leq\sum_{\substack{(F',F'')\in\mathcal{F}\times\mathcal{F}\\ V(F')\cap V(F'')\neq\varnothing}}\mathbb{P}[F'\cup F''\subseteq G[S]]
=\sum_{i=1}^{v_F}\sum_{\substack{(F',F'')\in\mathcal{F}\times\mathcal{F}\\ |V(F')\cap V(F'')|=i}}\mathbb{P}[F'\cup F''\subseteq G[S]].
\end{equation}
Let us estimate this quantity for each $i\in[v_F]$.
For $i\in[v_F-1]$ we can apply a double counting argument to see that 
\begin{equation}\label{equa:up3eq1}
|\{(F',F'')\in\mathcal{F}\times\mathcal{F}:|V(F')\cap V(F'')|=i\}|\leq2\sum_{W\in\binom{V}{i}}\binom{\deg_{\mathcal{F}}(W)}{2},
\end{equation}
while for $i=v_F$ we have that
\begin{equation}\label{equa:up3eq2}
|\{(F',F'')\in\mathcal{F}\times\mathcal{F}:|V(F')\cap V(F'')|=v_F\}|\leq|\mathcal{F}|\COMMENT{As we consider ordered pairs, these are always here, even if $\ell\leq v_F$.}+2\sum_{W\in\binom{V}{v_F}}\binom{\deg_{\mathcal{F}}(W)}{2}.
\end{equation}
Note that\COMMENT{as each $F'\in\mathcal{F}$ is counted each time $W\subseteq V(F')$, which happens exactly $\binom{v_F}{i}$ times, \[\sum_{W\in\binom{V}{i}}\deg_{\mathcal{F}}(W)=\binom{v_F}{i}|\mathcal{F}|.\]}
\begin{equation}\label{equa:up3eq25}
\sum_{W\in\binom{V}{i}}\deg_{\mathcal{F}}(W)=\bigO(|\mathcal{F}|)=\bigO(\varepsilon nd).
\end{equation}
By assumption on $F$, we have that $\deg_{\mathcal{F}}(W)\leq1$ for all $W$ such that $|W|\geq\ell$, which implies that $\binom{\deg_{\mathcal{F}}(W)}{2}=0$.
Moreover, by \eqref{equa:Fdegbound} and \eqref{equa:up3eq25}, for each $i\in[\ell-1]$ we obtain
\begin{equation}\label{equa:up3eq3}
\sum_{W\in\binom{V}{i}}\binom{\deg_{\mathcal{F}}(W)}{2}\leq\Delta\sum_{W\in\binom{V}{i}}\deg_{\mathcal{F}}(W)=\bigO(\varepsilon nd\Delta).
\end{equation}
Combining \eqref{equa:up3eq1}--\eqref{equa:up3eq3}, the estimation in \eqref{equa:up3var} yields
\begin{equation}\label{equa:up3var2}
\mathrm{Var}[X]=\bigO\left(\varepsilon nd\left(\frac{s}{n}\right)^{v_F}\right)+\sum_{i=1}^{\ell-1}\bigO\left(\varepsilon nd\Delta\left(\frac{s}{n}\right)^{2v_F-i}\right)=\varepsilon nd\cdot\bigO\left(\left(\frac{s}{n}\right)^{v_F}+\Delta\left(\frac{s}{n}\right)^{2v_F-\ell+1}\right).
\end{equation}

By Chebyshev's inequality, $\mathbb{P}[X=0]\leq\mathrm{Var}[X]/\mathbb{E}[X]^2$.
Using \eqref{equa:up3exp}, \eqref{equa:up3var2} and the fact that $c$ is large compared to $v_F$ and $e_F$, one can check that $\mathrm{Var}[X]/\mathbb{E}[X]^2<1/3$\COMMENT{We claim that $\mathbb{P}[X=0]\leq C(c^{-v_F}+c^{-\ell+1})$, where $C$ is some constant, so the conclusion follows by taking $c$ large enough (and observing that $\ell\geq r\geq 2$).\newline
To prove so, consider the two possible cases, $s=cn/(\varepsilon nd)^{1/v_F}$ (1) and $s=c(\Delta n^{\ell-1}/(\varepsilon nd))^{1/(\ell-1)}$ (2).
Observe that (1) holds if
\[n/(\varepsilon nd)^{1/v_F}\geq(\Delta n^{\ell-1}/(\varepsilon nd))^{1/(\ell-1)}\iff\Delta^{1/(\ell-1)}(\varepsilon nd)^{1/v_F-1/(\ell-1)}\leq1\iff\Delta(\varepsilon nd)^{(\ell-1)/v_F-1}\leq1\]
and (2) holds if
\[n/(\varepsilon nd)^{1/v_F}\leq(\Delta n^{\ell-1}/(\varepsilon nd))^{1/(\ell-1)}\iff\Delta^{-1/(\ell-1)}(\varepsilon nd)^{1/(\ell-1)-1/v_F}\leq1\iff(\varepsilon nd/\Delta)^{v_F/(\ell-1)}/(\varepsilon nd)\leq1.\]
By \eqref{equa:up3exp}, there exists a constant $C_e$ such that $\mathbb{E}[X]^2\geq C_e(\frac{\varepsilon nds^{v_F}}{n^{v_F}})^2$.
Similarly, by \eqref{equa:up3var2}, there is a constant $C_v$ such that $\mathrm{Var}[X]\leq C_v\varepsilon nd(\left(\frac{s}{n}\right)^{v_F}+\Delta\left(\frac{s}{n}\right)^{2v_F-\ell+1})$.
Combining these, in the first case we have that $s/n=c/(\varepsilon nd)^{1/v_F}$ and thus
\[\mathbb{P}[X=0]\leq\frac{C_v}{C_e}\frac{1}{\varepsilon nd}\left(\left(\frac{s}{n}\right)^{-v_F}+\Delta\left(\frac{s}{n}\right)^{-\ell+1}\right)=\frac{C_v}{C_e}\left(\frac{1}{c^{v_F}}+\frac{\Delta(\varepsilon nd)^{(\ell-1)/v_F-1}}{c^{\ell-1}}\right)\leq\frac{C_v}{C_e}(c^{-v_F}+c^{-\ell+1}),\]
where the last inequality holds by the first observation.
In the second, $s/n=c(\Delta/(\varepsilon nd))^{1/(\ell-1)}$ and by the second observation
\[\mathbb{P}[X=0]\leq\frac{C_v}{C_e}\frac{1}{\varepsilon nd}\left(\left(\frac{s}{n}\right)^{-v_F}+\Delta\left(\frac{s}{n}\right)^{-\ell+1}\right)=\frac{C_v}{C_e}\left(\frac{(\varepsilon nd/\Delta)^{v_F/(\ell-1)}/(\varepsilon nd)}{c^{v_F}}+\frac{1}{c^{\ell-1}}\right)\leq\frac{C_v}{C_e}(c^{-v_F}+c^{-\ell+1}).\]
It is easy to check that $C_v$ and $C_e$ depend only on $v_F$ and $e_F$, which explains why we need $c$ large with respect to these paramenters.
}.
Thus $G[S]$ contains a copy of $F$ with probability at least $2/3$.
Therefore, $G$ will be rejected with probability at least $2/3$, which shows that \cref{algo:tester5} is an $F$-freeness $\varepsilon$-tester.

The query complexity of the algorithm is given by performing all $\binom{s}{r}$ vertex-set queries.
This yields the stated complexity.
\end{proof}

\section*{Acknowledgements}

We would like to thank the anonymous referee for a careful reading and the valuable comments provided.




\appendix

\section{Proof of \cref{lema:disjointcopies}}\label{appendix1}

\begin{proof}[Proof of \cref{lema:disjointcopies}]
Clearly, $D_K\geq D_F$ for each $K\subseteq F$, and $D_K\leq X_K$.
Thus, $D_F\leq\min\{X_K:K\subseteq F, e_K>0\}$.
\Cref{coro:copies} applied to each $K$ implies that $\min\{X_K:K\subseteq F, e_K>0\}=\Theta(\Phi_{F})$ a.a.s., so $D_F=O(\Phi_{F})$ a.a.s.

It now suffices to show that $D_F=\Omega(\Phi_{F})$ a.a.s.
To do so, for each $G\in\mathcal{G}_{n,d}^{(r)}$ we define an auxiliary graph $\Gamma=\Gamma(G)$ whose vertices are all the copies of $F$ in $G$, and in which $F_1,F_2\in V(\Gamma)$ are adjacent if and only if $F_1$ and $F_2$ share at least one edge.
Let us denote by $\sum\nolimits^*$ the sum over all graphs $\tilde{F}$ which can be written as $\tilde{F}=F'\cup F''$, where $F', F''\subseteq K_V^{(r)}$, $F', F''\cong F$ and $E(F')\cap E(F'')\neq\varnothing$.
This means that $v_{\Gamma}=X_F$ and \[e_{\Gamma}=\bigO\left(\sum\nolimits^*X_{\tilde{F}}\right).\]
Note that the size of the largest independent set in $\Gamma$ equals $D_F$.
By Tur\'an's theorem\COMMENT{Tur\'an's theorem states that a $K_{r+1}$-free graph $G$ on $n$ vertices has at most $(r-1)n^2/2r$ edges. Let $\overline{r}$ be the size of the biggest independent set in $G$ so that $\overline{G}$ is $K_{\overline{r}+1}$-free. Then, \[\binom{n}{2}-e(G)=e(\overline{G})\leq(\overline{r}-1)n^2/2\overline{r}=\frac{1}{2}\left(1-\frac{1}{\overline{r}}\right)n^2.\] By rearranging this one obtains $\overline{r}\geq n^2/(2e(G)+n)$.} we have that
\[D_F\geq\frac{X_F^2}{X_F+\bigO\left(\sum\nolimits^*X_{\tilde{F}}\right)}.\]
Using \cref{coro:copies}, one can check that, a.a.s.\COMMENT{$\frac{X_F^2}{X_F+2\sum\nolimits^*X_{\tilde{F}}}=\Omega(\Phi_{F})\Longleftrightarrow X_F+2\sum\nolimits^*X_{\tilde{F}}=\frac{X_F^2}{\Omega(\Phi_{F})}=\bigO(X_F^2/\Phi_{F})$.
By \cref{coro:copies}, this a.a.s.~equals $\bigO(n^{2v_F}p^{2e_F}\Phi_{F}^{-1})$.
By definition of $\Phi_{F}$ and \cref{coro:copies}, $X_F=\bigO(X_F^2\Phi_{F}^{-1})$.},
\begin{equation}\label{equa:disj}
\frac{X_F^2}{X_F+\bigO\left(\sum\nolimits^*X_{\tilde{F}}\right)}=\Omega(\Phi_{F})\Longleftrightarrow\,X_{\tilde{F}}=\bigO\left({n^{2v_F}p^{2e_F}}{\Phi_{F}^{-1}}\right)
\end{equation}
for all $\tilde{F}=F'\cup F''$ such that $e_{F'\cap F''}>0$\COMMENT{as there is only a constant number of such $r$-graphs $\tilde{F}$}.
So it suffices to prove the final bound in \eqref{equa:disj}.

For any fixed $r$-graph $K$, let $\Psi_K\coloneqq n^{v_K}p^{e_K}$.
Note that if $K\subseteq F$, then $\Psi_K=\Theta(\mathbb{E}[X_K])$ (by \cref{coro:expectation}\ref{item3}).
Furthermore, for any two $r$-graphs $K$ and $L$ on a vertex set $V$,
\begin{equation}\label{equa:psimodularity}
\Psi_L\Psi_K=\Psi_{L\cup K}\Psi_{L\cap K}.
\end{equation}

Consider two copies $F'$ and $F''$ of $F$ whose intersection has at least one edge.
Let $\tilde{F}\coloneqq F'\cup F''$ and $K\coloneqq F'\cap F''$, so $e_K>0$.
Thus, by \cref{coro:expectation}\ref{item3},
\begin{equation}\label{equa:disjexpectation}
\mathbb{E}[X_{\tilde{F}}]=\Theta\left(n^{2v_F-v_K}p^{2e_F-e_K}\right)=\Theta\left(\frac{\Psi_F^2}{\Psi_K}\right)=O\left(\frac{\Psi_F^2}{\Phi_{F}}\right).
\end{equation}

We now claim that
\begin{equation}\label{equa:boundontildePhi}
\Phi_{\tilde{F}}=\min\{\mathbb{E}[X_L]:L\subseteq\tilde{F}, e_L>0\}=\Omega\left(\min\left\{\frac{\Phi_{F}^3}{\Psi_K^2},\frac{n\Phi_{F}^2}{\Psi_K^2}\right\}\right).
\end{equation}
Indeed, consider any $r$-graph $L\subseteq{\tilde{F}}$ with $e_L>0$ and let $L'\coloneqq L\cap F'$ and $L''\coloneqq L\cap F''$.
Note that $L\cup K=(L'\cup K)\cup (L''\cup K)$ and $K=(L'\cup K)\cap (L''\cup K)$, so two applications of \eqref{equa:psimodularity} yield
\begin{equation}\label{equa:psiapp}
\Psi_L=\frac{\Psi_{L\cup K}\Psi_{L\cap K}}{\Psi_K}=\frac{\Psi_{L'\cup K}\Psi_{L''\cup K}\Psi_{L\cap K}}{\Psi_K^2}.
\end{equation}
If $e_{L\cap K}>0$, then the values of $\Psi_{L'\cup K}$, $\Psi_{L''\cup K}$ and $\Psi_{L\cap K}$ can be lower bounded by $\Omega(\Phi_{F})$ (as $L'\cup K$, $L''\cup K$ and $L\cap K$ are all subgraphs of $F$).
Thus $\Psi_L=\Omega(\Phi_{F}^3/\Psi_K^2)$.
So suppose that $e_{L\cap K}=0$.
Then $L'$ and $L''$ are edge-disjoint, and at least one of them has at least one edge.
We may assume that $e_{L'}>0$ without loss of generality.
Consider three cases.
If $e_{L''}=0$, then $\Psi_{L''}=n^{v_{L''}}\geq\Psi_{L'\cap L''}$ and, by \eqref{equa:psimodularity}, $\Psi_L=\Psi_{L'}\Psi_{L''}/\Psi_{L'\cap L''}=\Omega(\Psi_{L'})=\Omega(\Phi_{F})=\Omega(\Phi_{F}^3/\Psi_K^2)$, where the final equality holds since $\Psi_K=\Omega(\Phi_{F})$.
If $e_{L''}>0$ but $L'\cap L''=\varnothing$ then $\Psi_L=\Psi_{L'}\Psi_{L''}=\Omega(\Psi_{L'})\COMMENT{As $\Psi_{L''}=\Omega(\Phi_{F})=\omega(1)$}=\Omega(\Phi_{F}^3/\Psi_K^2)$.
Otherwise, we have that $e_{L''}>0$ and $L'\cap L''\neq\varnothing$.
We use \eqref{equa:psiapp} taking into account that $\Psi_{L\cap K}=n^{v_{L\cap K}}=\Omega(n)$ to conclude that $\Psi_L=\Omega(n\Phi_{F}^2/\Psi_K^2)$.
This proves the claim.

By \cref{lema:variance} we have that $\mathrm{Var}(X_{\tilde{F}})=\mathbb{E}[X_{\tilde{F}}]^2\bigO(\varepsilon_{n,d}+\Phi_{\tilde{F}}^{-1})$.
As $\varepsilon_{n,d}=o(1)$ by assumption, by \eqref{equa:disjexpectation} Chebyshev's inequality implies that the final bound in \eqref{equa:disj} holds a.a.s.~if $\Phi_{\tilde{F}}^{-1}=\bigO(\varepsilon_{n,d})$.
Therefore, we may assume that $\mathrm{Var}(X_{\tilde{F}})=\bigO({\mathbb{E}[X_{\tilde{F}}]^2}/{\Phi_{\tilde{F}}})=\bigO({\Psi_{\tilde{F}}^2}/{\Phi_{\tilde{F}}})$.
Consequently, by \eqref{equa:boundontildePhi} and \eqref{equa:psimodularity} we have 
\[\mathrm{Var}(X_{\tilde{F}})=\bigO\left(\frac{\Psi_{\tilde{F}}^2\Psi_K^2}{\Phi_{F}^3}+\frac{\Psi_{\tilde{F}}^2\Psi_K^2}{n\Phi_{F}^2}\right)=\bigO\left(\frac{\Psi_F^4}{\Phi_{F}^3}+\frac{\Psi_F^4}{n\Phi_{F}^2}\right).\]
Thus, Chebyshev's inequality gives
\[\mathbb{P}\left[X_{\tilde{F}}\geq\mathbb{E}[X_{\tilde{F}}]+\frac{\Psi_F^2}{\Phi_{F}}\right]=\bigO\left(\Phi_{F}^{-1}+1/n\right)=o(1)\]
by assumption.
Hence $X_{\tilde{F}}=O\left(\Psi_F^2/\Phi_{F}\right)$ a.a.s., as required.
\end{proof}

\end{document}